\newcommand{\R}{\mathbb{R}}
\newcommand{\N}{{\mathbb N}}
\newcommand{\Z}{\mathbb Z}
\newcommand{\mtm}{\mathcal M}
\newcommand{\Lk}{\mathcal{L}_k}
\newcommand{\p}{\partial}
\newcommand{\ds}{\displaystyle}
\newcommand{\sn}{{\mbox{SN}}}
\newcommand{\cn}{{\mbox{CN}}}
\newcommand{\dn}{{\mbox{DN}}}
\numberwithin{equation}{section}
\newtheorem{theorem}{Theorem}[section]
\newtheorem{remark}[theorem]{Remark}
\newtheorem{lemma}[theorem]{Lemma}
\newtheorem{definition}[theorem]{Definition}
\begin{document}
\vglue-1cm \hskip1cm
\title[Stability of periodic waves for dispersive equations]{Orbital stability of one-parameter periodic traveling waves for dispersive equations and applications}



\begin{center}


\subjclass[2010]{Primary 35A01, 35Q53 ; Secondary 35Q35}

\keywords{Dispersive equations; Orbital stability; Periodic waves}

\maketitle

{\bf Thiago Pinguello de Andrade}

{Departamento de Matem{\'a}tica - Universidade Tecnol{\'o}gica Federal do Paran{\'a}  \\
Av. Alberto Carazzai, 1640, CEP 86300-000, Corn{\'e}lio Proc{\'o}pio, PR, Brazil.}\\
{thiagoandrade@utfpr.edu.br}

\vspace{3mm}

{\bf Ademir Pastor}

{ IMECC-UNICAMP\\
Rua S\'ergio Buarque de Holanda, 651, CEP 13083-859, Campinas, SP,
Brazil.  } \\{ apastor@ime.unicamp.br}

\end{center}

\begin{abstract}
This paper establishes sufficient conditions for the orbital stability of one-parameter spatially periodic traveling-wave solutions for one-dimensional dispersive equations. Our method of proof combines known techniques with some new ideas. As a consequence of our result, we give several applications for well known dispersive equations. Extension of the theory to regularized equations is also established.
\end{abstract}

\section{Introduction}

 This paper sheds new contributions on the orbital stability theory  of one-parameter periodic traveling-wave solutions for nonlinear  dispersive models which can be written in the forms
\begin{equation}\label{gkdv}
u_t-\mtm u_x+\partial_x(f(u))=0
\end{equation}
and 
\begin{equation}\label{gbbm}
u_t+\mtm u_t+\partial_x(u+f(u))=0,
\end{equation}
where $f:\R\to\R$ is  a (at least) $C^1$-function, in general representing the nonlinearity, and $\mtm$ is a differential or  pseudo-differential operator.

Equations as in \eqref{gkdv} and \eqref{gbbm} appear in many physical situations. For instance, it describes long-crested, long-wavelength disturbances of small
amplitude propagating primarily in one direction in a dispersive media (see \cite{bbm}). In particular, when $\mathcal{M}=-\partial_x^2$ and $f$ is a function having the form $f(u)=u^{p+1}$, where $p\geq1$ is an integer number, equation \eqref{gkdv} is the well known generalized Korteweg-de Vries (KdV) equation
\begin{equation}
u_t+u_{xxx}+\partial_x(u^{p+1})=0,
\end{equation} 
whereas \eqref{gbbm} reduces to the generalized regularized long-wave equation or generalized Benjamin-Bona-Mahony (BBM) equation 
\begin{equation}
u_t-u_{xxt}+u_x+\partial_x(u^{p+1})=0.
\end{equation} 

We point out that we will be primarily interested in equations having the form \eqref{gkdv}, because the theory can be easily extended to \eqref{gbbm} (see Section \ref{gbbmsec}).
Traveling-wave solutions (or,  for short, traveling waves) for \eqref{gkdv}  are those solutions having the form 
\begin{equation}\label{travform}
u(x,t)=\phi(x-ct),
\end{equation}
where $\phi:\R\to\R$ is a suitable function representing the profile of the wave and $c$ is a real constant representing the wave speed. The traveling waves of main interest are classified into two classes: \textit{solitary} and \textit{periodic} traveling waves. Solitary waves are those for which $\phi$, together with all its derivatives go to zero at infinity, whereas periodic traveling waves are those for which $\phi$ is a periodic function of its argument with a fixed period $L>0$.  The study of existence and stability (linear and nonlinear) of traveling waves has gained a lot of attention is recent decades. The interested reader will find a vast number of works in the current literature, which we refrain from list them here. Instead, we will focus in the works closed to ours.

 To obtain traveling waves, we usually replace the ansatz \eqref{travform} into \eqref{gkdv} and try to determine $\phi$ and $c$ which provide the desired solutions. 
Thus, by replacing this waveform in \eqref{gkdv}, we see that $\phi$ must be a solution of the differential or pseudo-differential equation
\begin{equation}\label{soleq}
(\mtm +c)\phi-f(\phi)+A=0,
\end{equation}
where $A$ appears as an integration constant. Thus, when we are interested in the study of traveling waves, solving  equation \eqref{soleq}, which turns out to be a  two-parameter equation, is the first step. There are several manner of finding traveling waves and the most popular ones, to cite a fews, are the quadrature method, by using the implicit function theorem or the Lyapunov-Schmidt method, applications of the critical point theory, and concentration-compactness techniques.

Let us now describe the framework in order to present our main result. Operator $\mathcal{M}$ shall be formally defined through its Fourier transform by
\begin{equation}\label{defM}
\widehat{\mtm u}(m)=\alpha(m)\widehat{u}(m), \qquad m\in \Z.
\end{equation}
Here and in what follows $\widehat{u}$ denotes the Fourier transform of the periodic function $u$.
The symbol $\alpha$ is assumed to be a measurable, locally bounded, even, and real  function satisfying the following:
\begin{itemize}
\item[(i)] there are real constants $s_1,s_2>0$  with $s_1\leq s_2$, and  $A_1, A_2>0$ such that
\begin{equation}\label{alphacond}
A_1|m|^{s_1}\leq\alpha(m)\leq A_2(1+|m|)^{s_2},
\end{equation}
for all $m$ sufficiently large;
\item[(ii)] there is a real constant $\gamma$ such that $\inf_{m\in \Z}\alpha(m)\geq \gamma$.
\end{itemize}

 In many applications the parameters $c$ and $A$ appearing in \eqref{soleq} are not independent themselves and it turn out to depend on a third parameter, which we shall denote by $k$. This is the case, for instance, when $\mtm$ is a differential operator and $f$ is a power-law function: in several situations the solutions of \eqref{soleq} depend on the well-known Jacobian elliptic functions and, as a consequence, the parameters $c$ and $A$ depend on the elliptic modulus $k\in(0,1)$. Having this in mind, our first assumption is the following one.
\begin{itemize}
\item[{\bf (H0)}] There are an interval $J\subset\R$, $C^1$-functions $k\in J\mapsto c=c(k)$ and $k\in J\mapsto A=A(k)$, and a nontrivial smooth curve of $L$-periodic solutions for \eqref{soleq}, say, $ k\in J  \mapsto \phi_k:=\phi_{(c(k),A(k))} \in H^{s_2}_{per}([0,L])$ with $c=c(k)>-\gamma$.
\end{itemize}
Here and in what follows $H^s_{per}([0,L])$ will denote the periodic Sobolev space of order $s$ (see definition below). The condition $c>-\gamma$ is necessary in order to make the operator $\mathcal{M}+c$ positive.

After the linearization of \eqref{gkdv} around a periodic solution $\phi=\phi_k$, we are faced an unbounded, closed, self-adjoint operator $\mathcal{L}_k:D(\mathcal{L}_k)\subset L^2_{per}([0,L])\to L^2_{per}([0,L])$, defined on a dense subspace, by
\begin{equation}\label{defL}
\mathcal{L}_ku:=(\mtm +c)u-f'(\phi_k)u.
\end{equation}
 Our assumptions concerning  $\mathcal{L}_k$ are the following.

\begin{itemize}
\item[{\bf(H1)}] The linearized operator $\mathcal{L}_k:=\mathcal{L}_{(c(k),A(k))}$ has a unique negative  eigenvalue, which is simple.
\item[{\bf(H2)}] Zero is a simple eigenvalue of $\mathcal{L}_k$ with associated  eigenfunction $\phi_k'$.
\end{itemize}

In order to introduce the remaining assumptions, let us recall that, at least formally, \eqref{gkdv} conserves the quantities
\begin{equation}\label{energy}
E(u)=\frac{1}{2}\int_0^L (u\mtm u-2F(u))dx, \quad \mbox{with} \quad F(u)=\int_0^uf(s)ds,
\end{equation}
\begin{equation}\label{mass}
Q(u)=\frac{1}{2}\int_0^L u^2dx,
\end{equation}
and
\begin{equation}\label{mm}
V(u)=\int_0^L u\,dx.
\end{equation}
Note that \eqref{soleq} is nothing but the Euler-Lagrange equation associated with the functional  
\begin{equation}\label{Fkfunctional}
F_k:=E+cQ+AV,
\end{equation}
that is, solutions of \eqref{soleq} are critical points of $F_k$. Thus, as is well understood, the nature of these points are crucial to determine their stability. In addition,  $\mathcal{L}_k$ is nothing but the second order Fr\'echet derivative  of $F_k$ at $\phi_k$, that is, $\mathcal{L}_k=F_k''(\phi_k)$. Thus, it is expected that the spectrum of $\mathcal{L}_k$ plays a crucial role in the stability analysis. We will show that {\bf (H1)-(H2)} is sufficient to our purposes.

Next, for a fixed $k\in J$, we introduce the  functional
$$
M_k(u):=\ds\frac{\partial c}{\partial k}Q(u)+\frac{ \partial A}{\partial k} V(u),
$$
where $\frac{\partial c}{\partial k}$ and $\frac{\partial A}{\partial k}$ denote, respectively, the derivatives of the functions $c(k)$ and $A(k)$ at $k$.
We assume the following.
\begin{itemize}
\item[{\bf(H3)}] The quantity $\Phi$ defined by $\Phi:=\left\langle\mathcal{ L}_k\left(\frac{\partial \phi_k}{\partial k}\right),\frac{\partial \phi_k}{\partial k}\right\rangle$
 is negative.
 \item[{\bf(H4)}] It holds $M_k(\phi_k)\neq -\dfrac{\partial c}{\partial k}Q(\phi_k)$.
\end{itemize}

Once property {\bf (H0)} has been proved, our main goal is to show that {\bf (H1)-(H4)} combine to establish the orbital stability of the traveling wave $\phi_k$, for each $k\in J$ fixed.
In order to make clear the definition of orbital stability, let us observe that \eqref{alphacond} implies that the operator $\mathcal{L}_k$ is well-defined on $H^{s_2}_{per}([0,L])$ and the natural Sobolev space to  consider the flow of \eqref{gkdv} is the energy space $H^{s_2/2}_{per}([0,L])$.

\begin{definition}
Let $\phi_k$ be an $L$-periodic solution of \eqref{soleq}. We say that $\phi_k$ is orbitally stable (by the flow of \eqref{gkdv}) in  $H^{s_2/2}_{per}([0,L])$ if, for any $\varepsilon>0$, there exists $\delta>0$ such that if $u_0\in H^{s_2/2}_{per}([0,L])$ satisfies
$$
\|u_0-\phi\|_{H^{s_2/2}_{per}}<\delta,
$$
then the solution $u(t)$ of \eqref{gkdv}, with initial data $u_0$, exists globally and satisfies
$$
\sup_{t\in\R}\inf_{r\in\R}\|u(t)-\phi_k(\cdot+r)\|_{H^{s_2/2}_{per}}<\varepsilon.
$$
Otherwise, we say that $\phi_k$ is $H^{s_2/2}_{per}$-unstable.
\end{definition}

\begin{remark}
Note that, by definition, if the Cauchy problem associated with \eqref{gkdv} is not globally well-posed in $H^{s_2/2}_{per}([0,L])$, at least for initial data in a small neighborhood of $\phi_k$,  then any traveling wave $\phi_k$ is $H^{s_2/2}_{per}$-unstable. Since the issue of well-posedness is out of the scope of this manuscript, in what follows we will assume that \eqref{gkdv} always admit global solutions for initial data in $H^{s_2/2}_{per}([0,L])$, that is, for any $u_0\in H^{s_2/2}_{per}([0,L])$, \eqref{gkdv} has a unique solution $u$ satisfying $u(0)=u_0$ and $u\in C([-T,T]; H^{s_2/2}_{per}([0,L]))$, for any $T>0$.
\end{remark}

Our main theorem concerning orbital stability reads as follows.

\begin{theorem}[Orbital stability]\label{gkdvmaintheorem}
Under assumptions {\bf (H0)-(H4)},
for each $k\in J$, the periodic traveling wave $\phi_k$
 is orbitally stable  in $H^{s_2/2}_{per}([0,L])$.
\end{theorem}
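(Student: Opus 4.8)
The plan is to follow the classical Grillakis--Shatah--Strauss framework, adapted to the periodic setting and to the one-parameter family structure encoded by {\bf (H0)}. The key object is the functional $F_k = E + cQ + AV$, whose critical point is $\phi_k$ and whose second variation is $\mathcal{L}_k = F_k''(\phi_k)$. Since $F_k$ is conserved (up to the fact that $E$, $Q$, $V$ are each conserved by the flow of \eqref{gkdv}), the strategy is to show that, modulo the symmetry of spatial translation, $\phi_k$ is a local (constrained) minimum of a suitable Lyapunov functional, so that a standard continuity/contradiction argument yields orbital stability. The translation symmetry is handled as usual: the infimum over $r\in\R$ in the definition of orbital stability is controlled by working on the codimension-one manifold transverse to the orbit $\{\phi_k(\cdot+r)\}$, on which $\mathcal{L}_k$ restricted is nonnegative away from the directions we must quotient out.

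First I would set up the spectral decomposition of $\mathcal{L}_k$ on $L^2_{per}([0,L])$: by {\bf (H1)}--{\bf (H2)} it has exactly one simple negative eigenvalue with eigenfunction $\chi$, a simple zero eigenvalue with eigenfunction $\phi_k'$, and the rest of the spectrum strictly positive and bounded below away from zero (using \eqref{alphacond} for the essential-spectrum/compactness part on the torus). The crucial coercivity lemma is then: there exists $C>0$ such that $\langle \mathcal{L}_k v, v\rangle \geq C\|v\|_{H^{s_2/2}_{per}}^2$ for all $v$ in the subspace
\[
\Big\{ v \in H^{s_2/2}_{per}([0,L]) : \langle v, \phi_k'\rangle = 0,\ \langle v, \tfrac{\partial\phi_k}{\partial k}\rangle = 0 \Big\},
\]
or more precisely on the intersection of $\{\langle v,\phi_k'\rangle = 0\}$ with the orthogonal complement of the "bad" direction detected by the conserved quantities. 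To prove this I would use {\bf (H3)}: the vector $\partial\phi_k/\partial k$ satisfies $\mathcal{L}_k(\partial\phi_k/\partial k) = -(\partial c/\partial k)\phi_k - (\partial A/\partial k)\cdot 1$ (differentiating \eqref{soleq} in $k$), and $\Phi = \langle \mathcal{L}_k(\partial\phi_k/\partial k),\partial\phi_k/\partial k\rangle < 0$ shows that $\partial\phi_k/\partial k$ has nonzero component along $\chi$ and that the quadratic form is negative there; a standard argument (projecting off $\chi$ and $\phi_k'$, then using that the negative direction is "used up" by $\partial\phi_k/\partial k$) upgrades nonnegativity on the constraint set to strict positivity. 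Condition {\bf (H4)} is what guarantees that the constraint coming from the conserved quantities $Q$ and $V$ is genuinely transverse to the kernel, i.e. that the relevant $2\times 2$ (or scalar, after using translation) matrix of derivatives of the conserved quantities along the family is nondegenerate, so the constrained minimization is not obstructed.

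Next I would translate this coercivity into a Lyapunov-function estimate. Define, for $u$ near the orbit of $\phi_k$, the functional $\mathcal{E}(u) = F_k(u) - F_k(\phi_k)$, plus a correction enforcing the constraints: one picks the translation parameter $r = r(u)$ minimizing $\|u - \phi_k(\cdot+r)\|$ (so that the residual $v = u(\cdot) - \phi_k(\cdot + r)$ is orthogonal to $\phi_k'$), and one uses conservation of $Q$ and $V$ to absorb the remaining bad direction — concretely, by adding Lagrange-multiplier terms $\mu(u)\,(Q(u)-Q(\phi_k)) + \nu(u)\,(V(u)-V(\phi_k))$ or, equivalently, restricting to the manifold where these are fixed. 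A Taylor expansion of $F_k$ around $\phi_k(\cdot+r)$ gives $\mathcal{E}(u) = \tfrac12\langle\mathcal{L}_k v,v\rangle + o(\|v\|_{H^{s_2/2}_{per}}^2)$, and the coercivity lemma bounds the leading term from below by $C\|v\|^2$ once $v$ lies in the constraint set. Since $\mathcal{E}$ is conserved along the flow and small at $t=0$ (because $u_0$ is close to $\phi_k$ and $E,Q,V$ are continuous), a bootstrap/continuity argument keeps $\inf_r\|u(t)-\phi_k(\cdot+r)\|_{H^{s_2/2}_{per}}$ small for all $t$, which is exactly orbital stability.

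\textbf{Main obstacle.} The delicate point is the passage from the spectral hypotheses {\bf (H1)}--{\bf (H3)} to the constrained coercivity of $\langle\mathcal{L}_k\cdot,\cdot\rangle$ in the energy norm $H^{s_2/2}_{per}$ (not just in $L^2_{per}$): one must control the "gap" quantitatively and simultaneously show that the two-dimensional space of constraints (translation direction $\phi_k'$ and the conserved-quantity direction involving $Q,V$) exactly cancels the single negative direction, which is where {\bf (H3)} and {\bf (H4)} must be combined with care. Handling the translation-invariance — i.e. justifying that $r(u)$ is well-defined and smooth for $u$ near the orbit, and that the expansion is uniform in $r$ — is routine via the implicit function theorem but must be done on the periodic space. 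The rest (Taylor expansion, continuity argument, use of conservation laws) is standard, modulo the assumed global well-posedness in $H^{s_2/2}_{per}([0,L])$.
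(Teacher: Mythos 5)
Your overall framework coincides with the paper's: a modulation lemma producing the translation $\omega(u)$ (Lemma \ref{bbm14}), coercivity of $\langle\mathcal{L}_k\cdot,\cdot\rangle$ on $\{\phi_k'\}^\perp\cap\{M_k'(\phi_k)\}^\perp$ using {\bf (H1)}--{\bf (H3)} (Lemma \ref{bbm15}), and the Taylor-expansion/Lyapunov estimate $F_k(u)-F_k(\phi_k)\geq C\rho(u,\phi_k)^2$ on the constraint manifold (Lemma \ref{lemacoercividadebbm}). Up to that point your sketch is sound and essentially identical to the paper's.

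The genuine gap is in the final step, and it is exactly the point where the paper departs from the textbook argument. Your Lyapunov estimate is only valid for $u$ lying on $\Sigma_k=\{M_k(u)=M_k(\phi_k)\}$, but an arbitrary initial datum $u_0$ close to $\phi_k$ (and hence the solution $u(t)$, since $Q$ and $V$ are conserved) satisfies only $M_k(u(t))=M_k(u_0)\approx M_k(\phi_k)$, not equality. Your proposed fix --- ``adding Lagrange-multiplier terms $\mu(u)(Q(u)-Q(\phi_k))+\nu(u)(V(u)-V(\phi_k))$ or, equivalently, restricting to the manifold'' --- is not worked out, and it cannot be dismissed as routine: those correction terms are of first order in the perturbation and of size comparable to $\delta$, so they are not $o(\|v\|^2)$ and cannot simply be absorbed into the quadratic coercivity; moreover, since the family $\phi_k$ is one-parameter while the constraint involves the two functionals $Q$ and $V$, one cannot in general re-center on a nearby wave $\phi_{k'}$ matching the perturbed constraint value, which is the usual way out in the Benjamin/Bona/GSS setting. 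The paper resolves this by a scaling device: for the (putative) escaping sequence $u_n(t_n)$ it finds factors $\alpha_n\to 1$ with $\alpha_n u_n(t_n)\in\Sigma_k$, by analyzing the parabola $\alpha\mapsto M_k(\alpha u_n(t_n))=\alpha^2 Q_k(w_n)+\alpha V_k(w_n)$ (Lemma \ref{lemmaconv1}); the role of {\bf (H4)} is precisely to guarantee that $M_k(\phi_k)$ is not the critical value of the limiting parabola, so that the level $M_k(\phi_k)$ is attained by the perturbed parabolas and one root converges to $1$. Your reading of {\bf (H4)} as a transversality/nondegeneracy condition on a $2\times 2$ matrix of derivatives of the conserved quantities is not what the hypothesis states ($M_k(\phi_k)\neq-\frac{\partial c}{\partial k}Q(\phi_k)$) and does not reflect how it is used; without the rescaling lemma (or a fully justified substitute for handling data off $\Sigma_k$), the contradiction argument does not close.
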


The strategy to prove Theorem \ref{gkdvmaintheorem} follows the classical arguments in   \cite{bss} and \cite{Grillakis}. Roughly speaking, if one restricts the augmented energy functional to a suitable manifold (see \eqref{Mkmanifold}) then the critical point in question is a minimum, which in turn implies the orbital stability.

Before proceeding, a few words of explanation concerning assumptions {\bf (H3)-(H4)} are in order. Assume for the moment that $A=0$ and \eqref{soleq} has a family of solutions $c\mapsto \phi_c$, for $c$ in an open interval. In this case, as is well-known in the current literature (see, for instance, \cite{bss} and \cite{Grillakis}), the Vakhitov-Kolokolov type condition
\begin{equation}\label{VKcond}
\dfrac{d}{d c}\int \phi_c^2\ dx>0,
\end{equation}
together with assumptions {\bf (H1)-(H2)} (with $\mathcal{L}_k$ replaced by $\mathcal{L}_c=\mathcal{M}+c-f'(\phi_c)$) is sufficient to imply the stability of $\phi_c$. Taking the derivative with respect to $c$ in \eqref{soleq} we deduce that $\mathcal{L}_c(\partial_c \phi_c)=-\phi_c$. Therefore,
$$
\dfrac{1}{2}\dfrac{d}{dc}\int \phi_c^2\ dx=\int \phi_c \partial_c \phi_c\ dx=-\int \mathcal{L}_c(\partial_c \phi_c)\partial_c \phi_c\ dx=-\langle \mathcal{L}_c(\partial_c \phi_c), \partial_c \phi_c \rangle,
$$
and \eqref{VKcond} is equivalent to
\begin{equation}\label{VKcond1}
\langle \mathcal{L}_c(\partial_c \phi_c), \partial_c \phi_c \rangle<0.
\end{equation}
Thus, condition $\Phi<0$ can be viewed as a generalization of \eqref{VKcond1} in the context of the present manuscript. Also, in the situation described in this paragraph, for which $c=k$, we  have that {\bf (H4)} is fulfilled provide $\phi_c\neq0$. Thus, {\bf (H4)} appears as an extra assumption in our context. It should be noted that {\bf (H4)} is indeed used to prevent $M_k(\phi_k)$ from being a critical value of a parabola defined in the proof of Theorem \ref{gkdvmaintheorem}. We believe, however, that this assumption is not restrictive as is shown in our applications.

Next, let us try to relate our work with the ones in the current literature (see also our applications below). The stability of spatially periodic traveling waves for dispersive equations was initiated by T.B. Benjamin (see \cite{be} and \cite{be1}), when studying the stability of cnoidal waves of the KdV equation. It should pointed out, however, that the orbital stability of such waves was completed a couple of decade later (see \cite{angulo1}). Specially after \cite{angulo1}, much effort has been expended on the stability theory of periodic traveling waves, and the issue has been attracted the attention of a much broader community of mathematicians and physicists.

Most of the works in the literature do not assume that $c$ and $A$ depend on a third parameter and, instead, solutions of \eqref{soleq} are parametrized by $c$ and $A$ themselves (or even more parameters). Specially in the case $\mathcal{M}=-\partial_x^2$,  the main results in this direction are provided by M. Johnson \cite{johnson1} and collaborators (see also \cite{bjk}, \cite{johnson2}, and references therein). Equation \eqref{soleq} are then written as
\begin{equation}\label{kdvsoleq}
-\phi''+c\phi-f(\phi)+A=0.
\end{equation}
Multiplying \eqref{kdvsoleq} by $\phi'$ and integrating once, we obtain
\begin{equation}\label{kdvsoleq1}
-\frac{1}{2}(\phi')^2+\frac{c}{2}\phi^2-F(\phi)+A\phi=B,
\end{equation}
where $B$ in another integration constant, interpreted as the energy. If the effective potential
$$
\Gamma(\phi)=-\frac{c}{2}\phi^2+F(\phi)-A\phi
$$
has a local minimum, then from the theory of differential equations, \eqref{kdvsoleq1} has periodic solutions which can be parametrized by the parameters $(c,A,B)$. In \cite{johnson1}, \cite{bjk} the author then establishes some criteria to determine the orbital stability of such waves depending on the sign of certain determinants, which encode some geometric information about the underlying manifold of periodic solutions. This approach is much  general. Indeed, it brings many important contributions to the theory of orbital stability for periodic traveling waves and it has been successfully applied in several situations. For one  hand, our approach is more particular and it does not provide some criterion to study the orbital stability of \textit{all} periodic solution of \eqref{soleq} (or \eqref{kdvsoleq}). On the other hand,  when applicable, our method yields some simplifications and even provides new results.

Besides this introduction, the paper is organized as follows. In Section \ref{section2} we prove Theorem \ref{gkdvmaintheorem}. In Section \ref{applic} we give the applications of our method to the Korteweg-de Vries, modified Korteweg-de Vries, Gardner, Intermediate Long Wave, and Schamel equations. Extension to regularized equations as in \eqref{gbbm} will be given in Section \ref{gbbmsec}. Applications to the regularized Schamel and modified Benjamin-Bona-Mahony equations are also provided.\\

\noindent {\bf Notation.} For $s\in\mathbb{R}$, the Sobolev space
$H_{per}^{s}=H_{per}^{s}([0,L])$ is the set of all periodic distributions
such that $||f||_{H^s_{per}}^2:=
L\sum_{k=-\infty}^{+\infty}(1+|k|^2)^s|\widehat{f}(k)|^2 <\infty, $ where
$\widehat{f}$ is the (periodic) Fourier transform of $f$. For $s=0$, $H_{per}^{s}([0,L])$ is isometric to $L_{per}^{2}([0,L])$. The norm and inner product  in $L^2_{per}$ will be denoted by $\|\cdot\|$ and $(\cdot,\cdot)_{L^2_{per}}$, respectively. By $\langle\cdot,\cdot\rangle$ we mean the duality pairing $H^s_{per}$-$H^{-s}_{per}$.  The symbols $\sn(\cdot,k)$,
$\dn(\cdot,k)$, and $\cn(\cdot,k)$ represent the Jacobi elliptic functions of
\emph{snoidal}, \emph{dnoidal}, and \emph{cnoidal} type, respectively. Recall
that $\sn(\cdot,k)$ is an odd function, while $\dn(\cdot,k)$, and
$\cn(\cdot,k)$ are even functions. For $k\in(0,1)$, $K(k)$ and $E(k)$ will
denote the complete elliptic integrals of the first and second type,
respectively (see e.g., \cite{friedman}).

\section{Proof of Theorem \ref{gkdvmaintheorem}}\label{section2}

Our goal in this section is to prove Theorem \ref{gkdvmaintheorem}, that is, to show how Assumptions {\bf (H1)-(H4)} implies the orbital stability of the periodic traveling wave $\phi_k$.

To begin with, in $H^{s_2/2}_{per}([0,L])$, let us introduce the pseudo-metric $\rho$  defined by
\begin{equation}\label{rhodef}
\rho(u,v):=\inf_{r\in\R} \| u-v(\cdot+r)\|_{H_{per}^{s_2/2}}.
\end{equation}
Given any real number $\varepsilon>0$, $U_\varepsilon(\phi_k)$ shall denote the $\varepsilon$-neighborhood of $\phi_k$ with respect to $\rho$, that is,
$$
U_\varepsilon(\phi_k)=\{u\in H^{s_2/2}_{per}([0,L]); \; \rho(u,\phi_k)<\varepsilon\}.
$$
Also, we introduce the manifold $\Sigma_k$  as
\begin{equation}\label{Mkmanifold}
\Sigma_k:=\{u\in H^{s_2/2}_{per}([0,L]);\,\,  M_k(u)=M_k(\phi_k)\}.
\end{equation}

Now, we state two classical lemmas. The proofs in our case are very close to the original ones.

\begin{lemma}\label{bbm14}
There exist $\varepsilon>0$ and a $C^1$ map  $\omega:U_\varepsilon(\phi_k) \to \mathbb{R}$, such that for all $u\in U_\varepsilon(\phi_k)$,
$$\Big( u( \cdot+\omega(u)),\phi_k'\Big)_{L^2_{per}}=0.
$$
\end{lemma}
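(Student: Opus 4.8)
The plan is to prove Lemma~\ref{bbm14} via a standard application of the implicit function theorem, exploiting the translation invariance of the problem and the fact that $\phi_k'$ is a nontrivial element of $L^2_{per}$ (it is the eigenfunction associated with the zero eigenvalue by {\bf (H2)}, hence in particular $\phi_k'\not\equiv 0$, so $\|\phi_k'\|^2>0$).

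\medskip

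\noindent\textbf{Setup.} First I would define the function
$$
G:H^{s_2/2}_{per}([0,L])\times\R\longrightarrow\R,\qquad G(u,r):=\bigl(u(\cdot+r),\phi_k'\bigr)_{L^2_{per}}.
$$
Using the periodicity and a change of variables, one checks that $G$ is well defined and of class $C^1$ in $(u,r)$: it is linear (hence smooth) in $u$ for fixed $r$, and the dependence on $r$ is smooth because $\phi_k'\in H^{s_2-1}_{per}$ is smooth enough (indeed $\phi_k\in H^{s_2}_{per}$ by {\bf (H0)}, and in the applications $\phi_k$ is $C^\infty$), so that $r\mapsto\phi_k'(\cdot-r)$ is a $C^1$ curve in $L^2_{per}$ and $G(u,r)=(u,\phi_k'(\cdot-r))_{L^2_{per}}$; differentiating under the integral sign gives $\partial_r G(u,r)=(u(\cdot+r),\phi_k'')_{L^2_{per}}$.

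\medskip

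\noindent\textbf{Base point.} Next I evaluate at $(u,r)=(\phi_k,0)$. Since $\phi_k'$ is $L$-periodic, $\int_0^L\phi_k'\phi_k''\,dx=\tfrac12\int_0^L\bigl((\phi_k')^2\bigr)'dx=0$, and therefore
$$
G(\phi_k,0)=\bigl(\phi_k,\phi_k'\bigr)_{L^2_{per}}=\tfrac12\int_0^L(\phi_k^2)'\,dx=0,
\qquad
\partial_r G(\phi_k,0)=\bigl(\phi_k,\phi_k''\bigr)_{L^2_{per}}=-\|\phi_k'\|^2<0.
$$
Because $\partial_r G(\phi_k,0)\neq0$, the implicit function theorem yields $\e>0$ and a $C^1$ map $\omega:U_\e(\phi_k)\to\R$ with $\omega(\phi_k)=0$ and $G(u,\omega(u))=0$ for all $u$ in the $H^{s_2/2}_{per}$-ball of radius $\e$ around $\phi_k$, which is exactly the desired identity $\bigl(u(\cdot+\omega(u)),\phi_k'\bigr)_{L^2_{per}}=0$. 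Shrinking $\e$ if necessary and extending $\omega$ to all of $U_\e(\phi_k)$ (the tube around the whole orbit) is done by exploiting translation invariance: if $v=u(\cdot+r_0)$ lies in the $H^{s_2/2}_{per}$-ball, set $\omega(u):=\omega(v)+r_0$; one must check this is well defined, which follows from the local uniqueness part of the implicit function theorem together with the $L$-periodicity (so $\omega$ is defined modulo $L$, or one fixes a branch).

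\medskip

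\noindent\textbf{Main obstacle.} The genuinely delicate point is not the implicit function theorem itself but passing from a neighborhood of the \emph{point} $\phi_k$ to a neighborhood of its \emph{orbit} $\{\phi_k(\cdot+r):r\in\R\}$, i.e.\ to the set $U_\e(\phi_k)$ defined through the pseudo-metric $\rho$, while keeping $\omega$ single-valued and $C^1$. This requires verifying that the local solutions obtained near each translate $\phi_k(\cdot+r_0)$ glue consistently; the key facts are the local uniqueness in the implicit function theorem and the equivariance $G(u(\cdot+r_0),r)=G(u,r+r_0)$, which forces $\omega(u(\cdot+r_0))=\omega(u)-r_0\pmod L$. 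One then fixes, say, $|\omega(u)|<L$ (or works on the quotient $\R/L\Z$) to obtain a genuine $C^1$ function, and smoothness is inherited from the implicit function theorem on each chart. This is the standard modulation/tubular-neighborhood argument, and the details follow the original references closely, as the authors indicate.
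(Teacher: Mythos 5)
Your proposal is correct and takes essentially the same approach as the paper, whose proof simply defers to the implicit function theorem argument of \cite[Lemma 4.1]{bss} and \cite[Lemma 7.7]{angulo4}: define $G(u,r)=\bigl(u(\cdot+r),\phi_k'\bigr)_{L^2_{per}}$, verify $G(\phi_k,0)=0$ and $\partial_r G(\phi_k,0)\neq0$, and pass from a ball around $\phi_k$ to the $\rho$-tube $U_\varepsilon(\phi_k)$ via the translation equivariance $G(u(\cdot+r_0),r)=G(u,r+r_0)$ and local uniqueness. (One harmless slip: $\partial_r G(u,r)=\bigl(u'(\cdot+r),\phi_k'\bigr)_{L^2_{per}}=-\bigl(u(\cdot+r),\phi_k''\bigr)_{L^2_{per}}$, so $\partial_r G(\phi_k,0)=+\|\phi_k'\|^2>0$ rather than negative; only its nonvanishing is used, so the argument is unaffected.)
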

\begin{proof}
The proof is based on an application of Implicit Function Theorem. See \cite[Lemma 4.1]{bss} or \cite[Lemma 7.7]{angulo4} for details.
\end{proof}

\begin{lemma}\label{bbm15}
Let
$$
\mathcal{A}=\left\{ \psi\in H^{s_2/2}_{per}([0,L]); ( \psi,M_k'(\phi_k))_{L^2_{per}}= ( \psi, \phi_k')_{L^2_{per}}=0\right\}.
$$
Under assumptions  ${\bf (H0)}$-${\bf (H3)}$
 there exists $C>0$ such that 
$$\langle \mathcal{L}_k\psi,\psi\rangle \geq C \| \psi\|_{H^{s_2/2}_{per}}^2, \quad \psi \in \mathcal{A}.
$$
\end{lemma}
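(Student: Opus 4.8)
The plan is to prove the coercivity estimate on $\mathcal{A}$ by a standard spectral-decomposition argument, controlling the two "bad" directions of $\mathcal{L}_k$ (the negative eigenvalue and the kernel) by the single constraint $(\psi, M_k'(\phi_k))_{L^2_{per}}=0$ together with the orthogonality to $\phi_k'$. First, I would record the spectral picture furnished by {\bf (H1)}--{\bf (H2)}: by {\bf (H1)} $\mathcal{L}_k$ has exactly one negative eigenvalue $-\lambda<0$ with a normalized eigenfunction $\chi$ (which one may take real and with $(\chi,\chi)_{L^2_{per}}=1$), by {\bf (H2)} the kernel is spanned by $\phi_k'$, and by the growth condition \eqref{alphacond} together with item (ii) on the symbol $\alpha$, the rest of the spectrum is positive and bounded away from zero (the essential spectrum of $\mathcal{L}_k$, being a relatively compact perturbation of $\mathcal{M}+c$, is empty; the remainder of the spectrum is a discrete set of eigenvalues accumulating at $+\infty$). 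Hence there is $\beta>0$ with $\langle \mathcal{L}_k v, v\rangle \ge \beta\|v\|^2$ for every $v\in D(\mathcal{L}_k)$ with $v\perp\chi$ and $v\perp\phi_k'$ in $L^2_{per}$.

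Next I would carry out the classical two-step reduction. Step one: show that $\langle \mathcal{L}_k\psi,\psi\rangle\ge 0$ for all $\psi\in\mathcal{A}$, and moreover that equality forces $\psi=0$ modulo the kernel. Writing $\psi = a\chi + p\,\phi_k' + v$ with $v\perp\chi,\phi_k'$, the condition $\psi\in\mathcal{A}$ already kills the $\phi_k'$-component in the sense that $(\psi,\phi_k')=0$ gives $p\|\phi_k'\|^2=0$, so $p=0$ and $\psi = a\chi + v$. Then $\langle \mathcal{L}_k\psi,\psi\rangle = -\lambda a^2 + \langle \mathcal{L}_k v,v\rangle$, so the only obstruction to positivity is a large $a$. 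To control $a$ one uses the remaining constraint $(\psi, M_k'(\phi_k))_{L^2_{per}}=0$ — this is where {\bf (H3)} enters. The key identity is that differentiating \eqref{soleq} in $k$ gives $\mathcal{L}_k(\partial_k\phi_k) = -M_k'(\phi_k)$ (since $M_k'(\phi_k) = \tfrac{\partial c}{\partial k}\phi_k + \tfrac{\partial A}{\partial k}$, and $\partial_k$ applied to $(\mathcal{M}+c)\phi_k - f(\phi_k)+A=0$ yields exactly $\mathcal{L}_k(\partial_k\phi_k) + \tfrac{\partial c}{\partial k}\phi_k + \tfrac{\partial A}{\partial k}=0$). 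Thus $\Phi = \langle\mathcal{L}_k(\partial_k\phi_k),\partial_k\phi_k\rangle = -(M_k'(\phi_k),\partial_k\phi_k)_{L^2_{per}}<0$ by {\bf (H3)}. Now decompose $\partial_k\phi_k = a_0\chi + p_0\phi_k' + v_0$ in the same basis; the standard computation (see \cite{bss}, \cite{Grillakis}, \cite{Weinstein}) shows that for $\psi\in\mathcal{A}$ with $\psi = a\chi+v$, minimizing $\langle\mathcal{L}_k\psi,\psi\rangle$ subject to $(\psi, M_k'(\phi_k))=0$ reduces, via Lagrange multipliers and the fact that $M_k'(\phi_k) = -\mathcal{L}_k(\partial_k\phi_k)$, to showing that the scalar $\langle\mathcal{L}_k^{-1}_{|\{\phi_k'\}^\perp} M_k'(\phi_k), M_k'(\phi_k)\rangle = -\langle\mathcal{L}_k(\partial_k\phi_k),\partial_k\phi_k\rangle = -\Phi > 0$; this sign is precisely what makes the quadratic form nonnegative on the constraint set.

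Step two: upgrade nonnegativity to strict coercivity. This is the routine part: if $\langle\mathcal{L}_k\psi_n,\psi_n\rangle \to 0$ along a sequence $\psi_n\in\mathcal{A}$ with $\|\psi_n\|_{H^{s_2/2}_{per}}=1$, extract a weakly convergent subsequence $\psi_n \rightharpoonup \psi_\infty$ in $H^{s_2/2}_{per}$; the constraints defining $\mathcal{A}$ pass to the weak limit, weak lower semicontinuity of $v\mapsto\langle(\mathcal{M}+c)v,v\rangle$ together with compactness of the lower-order term $f'(\phi_k)\psi_n^2$ (by Rellich, since $f'(\phi_k)\in L^\infty_{per}$ and $H^{s_2/2}_{per}\hookrightarrow\hookrightarrow L^2_{per}$) gives $\langle\mathcal{L}_k\psi_\infty,\psi_\infty\rangle\le 0$, hence $=0$, hence by step one $\psi_\infty\in\mathrm{span}\{\phi_k'\}\cap\mathcal{A}=\{0\}$; but then the same compactness forces $\|\psi_n\|\to 0$ and the quadratic form being $\ge \beta\|v_n\|^2$ on the good subspace forces $\|\psi_n\|_{H^{s_2/2}_{per}}\to 0$, contradicting $\|\psi_n\|_{H^{s_2/2}_{per}}=1$.

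The main obstacle is Step one — specifically, verifying that the single linear constraint coming from $M_k'(\phi_k)$ is "aligned correctly" so that it does kill the negative direction rather than being wasted on the positive subspace. This is exactly the content of {\bf (H3)}: the condition $\Phi<0$ guarantees that $\partial_k\phi_k$ has a genuine component along $\chi$ (if $a_0=0$ then $\Phi = \langle\mathcal{L}_k v_0, v_0\rangle \ge 0$, contradiction), which is what allows the standard Lemma E.1-type argument of \cite{Grillakis} (or \cite[Lemma 4.2]{bss}) to go through. I would invoke that lemma in the form: if $\mathcal{L}$ is self-adjoint with a single negative eigenvalue and kernel spanned by $\phi_k'$, and if there exists $\eta$ (here $\eta = \partial_k\phi_k$) with $\langle\mathcal{L}\eta,\eta\rangle<0$, then $\mathcal{L}$ is positive definite on $\{\psi : (\psi,\mathcal{L}\eta)=0,\ (\psi,\phi_k')=0\}$; applying it with $\mathcal{L}\eta = -M_k'(\phi_k)$ finishes the proof.
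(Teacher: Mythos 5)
Your argument is correct and is essentially the proof the paper intends: the paper omits the details and simply refers to \cite{Grillakis} (Theorem 3.3) and \cite{angulo4} (Lemma 7.8) with the remark that $d''(c)>0$ is replaced by $\Phi<0$, and your reduction via the identity $\mathcal{L}_k(\partial_k\phi_k)=-M_k'(\phi_k)$, which turns the constraint $(\psi,M_k'(\phi_k))_{L^2_{per}}=0$ into the standard constraint $(\psi,\mathcal{L}_k\eta)_{L^2_{per}}=0$ with $\eta=\partial_k\phi_k$ and $\langle\mathcal{L}_k\eta,\eta\rangle=\Phi<0$, is exactly that argument. One small sign slip in the intermediate step: $\bigl\langle\mathcal{L}_k^{-1}M_k'(\phi_k),M_k'(\phi_k)\bigr\rangle=\langle\mathcal{L}_k\partial_k\phi_k,\partial_k\phi_k\rangle=\Phi<0$ (not $-\Phi>0$), and it is the \emph{negativity} of this scalar that the standard lemma requires; since your final invocation of the lemma (existence of $\eta$ with $\langle\mathcal{L}_k\eta,\eta\rangle<0$) is stated in the correct form, the conclusion is unaffected.
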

\begin{proof}
The proof is quite standard by now, so we omit the details. We refer the interested reader to \cite[Theorem 3.3]{Grillakis} or \cite[Lemma 7.8]{angulo4}. It should be noted that the assumption $d''(c)>0$ in such references must be replaced by $\Phi<0$ in our case.
\end{proof}

In the next lemma we prove that $\phi_k$ is a local minimum of the functional  $F_k$  restrict to the manifold $\Sigma_k$. It worth mentioning that its proof relies on the classical ideas with some changes in the spirit of  \cite[Lemma 4.6]{johnson1}.

\begin{lemma} \label{lemacoercividadebbm}
 Under the above assumptions, there exist $\varepsilon >0$  and a constant $C=C(\varepsilon)$ such that
\begin{equation}F_k(u)-F_k(\phi_k)\geq C\rho(u,\phi_k)^2,\label{bbm3}\end{equation}
for all  $u\in U_\varepsilon(\phi_k)$  satisfying  $M_k(u)=M_k(\phi_k)$.
\end{lemma}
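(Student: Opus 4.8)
The plan is to run the standard Weinstein--Grillakis--Shatah--Strauss-type coercivity argument, with the twist (following \cite{johnson1}) that the constrained functional is controlled via a second-order Taylor expansion whose quadratic form is $\mathcal{L}_k$ restricted to a codimension-two subspace. First I would Taylor-expand $F_k$ around $\phi_k$: since $F_k'(\phi_k)=0$ by \eqref{soleq} and $F_k''(\phi_k)=\mathcal{L}_k$, for $u$ close to $\phi_k$ we have
\begin{equation*}
F_k(u)-F_k(\phi_k)=\tfrac{1}{2}\langle\mathcal{L}_k(u-\phi_k),u-\phi_k\rangle+o\big(\|u-\phi_k\|_{H^{s_2/2}_{per}}^2\big).
\end{equation*}
Because $F_k$ is translation invariant, the same expansion holds with $u$ replaced by $u(\cdot+\omega(u))$ from Lemma \ref{bbm14}; set $v:=u(\cdot+\omega(u))-\phi_k$, which by construction satisfies $(v+\phi_k,\phi_k')_{L^2_{per}}=0$ hence, since $(\phi_k,\phi_k')_{L^2_{per}}=0$ by periodicity, $(v,\phi_k')_{L^2_{per}}=0$. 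It therefore remains to bound $\langle\mathcal{L}_kv,v\rangle$ from below by $C\|v\|_{H^{s_2/2}_{per}}^2$, knowing only that $v$ is orthogonal to $\phi_k'$ and that $M_k(u)=M_k(\phi_k)$.

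The second step is to decompose $v$ along the constraint directions so as to land in the good subspace $\mathcal{A}$ of Lemma \ref{bbm15}. Note $M_k'(\phi_k)=\frac{\partial c}{\partial k}\phi_k+\frac{\partial A}{\partial k}$ (as an element of $L^2_{per}$, using $Q'(u)=u$, $V'(u)=1$). The constraint $M_k(u)=M_k(\phi_k)$ expanded to first order gives $(v,M_k'(\phi_k))_{L^2_{per}}=O(\|v\|^2)$, so $v$ is \emph{almost} in $\mathcal{A}$. Write $v=w+a\,p$ where $p$ is chosen so that $\langle\mathcal{L}_k p,\cdot\rangle$ or the $L^2$-pairing with $M_k'(\phi_k)$ separates the components; concretely, project $v$ onto $\mathcal{A}$ and a one-dimensional complement spanned by a fixed vector (e.g. a suitable combination of $\partial_k\phi_k$ and $\phi_k'$), getting $w\in\mathcal{A}$ with $\|w-v\|=O(\|v\|^2)$ and $|a|=O(\|v\|^2)$. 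Then Lemma \ref{bbm15} gives $\langle\mathcal{L}_kw,w\rangle\geq C\|w\|_{H^{s_2/2}_{per}}^2$, and since $\mathcal{L}_k$ is bounded on $H^{s_2/2}_{per}$ the error terms $\langle\mathcal{L}_k(v-w),v\rangle$, $\langle\mathcal{L}_kw,v-w\rangle$ are $O(\|v\|^3)$, which is absorbed. This yields $F_k(u)-F_k(\phi_k)\geq \tfrac{C}{2}\|v\|_{H^{s_2/2}_{per}}^2+o(\|v\|^2)\geq C'\|v\|_{H^{s_2/2}_{per}}^2$ for $\varepsilon$ small; finally $\|v\|_{H^{s_2/2}_{per}}=\|u(\cdot+\omega(u))-\phi_k\|_{H^{s_2/2}_{per}}\geq\rho(u,\phi_k)$ by definition \eqref{rhodef}, giving \eqref{bbm3}.

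The main obstacle, and the point where assumption {\bf (H4)} enters, is checking that the one-dimensional complement direction $p$ is genuinely transverse to $\mathcal{A}$ and that the linearized constraint $(v,M_k'(\phi_k))_{L^2_{per}}=O(\|v\|^2)$ does not degenerate — i.e., that the map $v\mapsto$ (components along $\phi_k'$ and along $M_k'(\phi_k)$) is an isomorphism onto $\R^2$ restricted to the relevant plane. This is exactly where one must verify that $M_k(\phi_k)$ is not the critical value of the parabola $t\mapsto M_k(\phi_k+t\,\partial_k\phi_k)$ (or the analogous quadratic along the minimizing direction); the identity $M_k(\phi_k+t\partial_k\phi_k)=M_k(\phi_k)+t\,(M_k'(\phi_k),\partial_k\phi_k)_{L^2_{per}}+\tfrac{t^2}{2}(M_k'(\phi_k),\partial_k\phi_k)_{L^2_{per}}'+\cdots$ and a short computation show the vertex condition is precisely $M_k(\phi_k)\neq-\frac{\partial c}{\partial k}Q(\phi_k)$, i.e.\ {\bf (H4)}. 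I would handle this by first writing out $\frac{d}{dk}M_k(\phi_k)$ and $(M_k'(\phi_k),\partial_k\phi_k)_{L^2_{per}}$ explicitly in terms of $Q(\phi_k)$, $V(\phi_k)$ and their $k$-derivatives, then using (H4) to conclude the relevant $2\times2$ Gram-type determinant is nonzero, which makes the projection in the previous paragraph well-defined with the claimed error estimates; (H3), $\Phi<0$, is what makes Lemma \ref{bbm15} applicable and hence guarantees the positivity of $\mathcal{L}_k$ on $\mathcal{A}$.
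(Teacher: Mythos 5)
Your first two paragraphs reproduce, in essence, the paper's own argument: after modulating by $\omega(u)$ from Lemma \ref{bbm14}, one writes $v=u(\cdot+\omega(u))-\phi_k=C_1M_k'(\phi_k)+y$ with $y\in\{M_k'(\phi_k)\}^\perp\cap\{\phi_k'\}^\perp\subset\mathcal{A}$, uses the constraint $M_k(u)=M_k(\phi_k)$ and a Taylor expansion of $M_k$ to get $C_1=O(\|v\|^2)$, applies Lemma \ref{bbm15} to $y$, and absorbs the cross terms and the quartic term — exactly your $w$, $a$, $p$ decomposition. Note that with the canonical choice $p=M_k'(\phi_k)$ (which is automatically $L^2$-orthogonal to $\phi_k'$, since $\int\phi_k\phi_k'\,dx=\int\phi_k'\,dx=0$), the only nondegeneracy needed is $\|M_k'(\phi_k)\|^2\neq 0$; no $2\times 2$ Gram determinant is involved.

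Where your proposal goes astray is the third paragraph: assumption {\bf (H4)} plays no role in this lemma. The transversality you worry about reduces, with $p=M_k'(\phi_k)$, to $M_k'(\phi_k)\neq 0$, and this already follows from {\bf (H3)}: differentiating \eqref{soleq} in $k$ gives $\mathcal{L}_k(\partial_k\phi_k)=-\frac{\partial c}{\partial k}\phi_k-\frac{\partial A}{\partial k}=-M_k'(\phi_k)$, so $\Phi=-\langle M_k'(\phi_k),\partial_k\phi_k\rangle<0$ forces $M_k'(\phi_k)\neq 0$. Likewise, your ``vertex condition'' computation concerns the parabola $t\mapsto M_k(\phi_k+t\,\partial_k\phi_k)$, whose linear coefficient is $-\Phi$; the condition that $M_k(\phi_k)$ not be its critical value is again $\Phi\neq 0$, i.e.\ a consequence of {\bf (H3)}, not {\bf (H4)}. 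In the paper, {\bf (H4)} enters only later, in Lemma \ref{lemmaconv1} inside the proof of Theorem \ref{gkdvmaintheorem}: there one must rescale $u_n(t_n)$ by a factor $\alpha_n$ so that $\alpha_nu_n(t_n)\in\Sigma_k$, and {\bf (H4)} guarantees that $M_k(\phi_k)=a+b$ is not the minimum value of the dilation parabola $\alpha\mapsto \alpha^2Q_k(\cdot)+\alpha V_k(\cdot)$ (since $a+b=-b^2/4a$ if and only if $a+b=-a$). This misattribution does not invalidate your first two paragraphs, but the justification you give for the projection being well-defined should be replaced by the {\bf (H3)}-based observation above (or simply by taking $p=M_k'(\phi_k)$, as the paper does); the coercivity estimate \eqref{bbm3} needs only {\bf (H0)}--{\bf (H3)}.
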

\begin{proof}
Since $F_k$ is invariant by translations, we have $F_k(u)=F_k(u(\cdot+r))$, for all  $ r\in\mathbb{R}$.  Thus, it suffices to prove that
$$F_k(u(\cdot+\omega(u)))-F_k(\phi_k)\geq C \rho(u,\phi_k)^2,$$
where $\omega$ is given in Lemma \ref{bbm14}. 

By fixing $u\in U_\varepsilon(\phi_k)\cap\Sigma_k$ (with $\varepsilon$ as in Lemma \ref{bbm14}) and making use of Lemma \ref{bbm14}, it follows that there exists $C_1\in\mathbb{R}$ such that
$$v:=u(\cdot+\omega(u))-\phi_k=C_1M_k'(\phi_k)+y,$$
where $y\in \mathcal{T}_k := \{M_k'(\phi_k)\}^\perp \cap\{\phi_k'\}^\perp$. Since  $u$ belongs to $ U_\varepsilon(\phi_k)$, up to a translation  in $\phi_k$, we may assume that $v=u(\cdot+\omega(u))-\phi_k$ satisfies $\|v\|_{H^{s_2/2}_{per}}<\varepsilon$. 

Let us prove that $C_1=O(\|v\|^2)$. In fact, using the invariance by translation of  $M_k$, a Taylor expansion gives
\begin{equation}
M_k(u)=M_k(u(\cdot+\omega(u)))=M_k(\phi_k)+\langle M_k'(\phi_k),v\rangle +O(\|v\|^2).
\label{bbm5}
\end{equation}
On the other hand, since $y\in\mathcal{T}_k$, we have $\langle M_k'(\phi_k),y\rangle=0$ and
\begin{equation}\langle M_k'(\phi_k),v\rangle=\langle M_k'(\phi_k), C_1M_k'(\phi_k)+y\rangle=C_1\langle M_k'(\phi_k), M'_k(\phi_k)\rangle= C_1N,
\label{bbm6}
\end{equation}
where $N$ is a constant depending only on $k$. Therefore, since $M_k(u)=M_k(\phi_k)$, it follows from  (\ref{bbm5}) and (\ref{bbm6}) that
\begin{equation}
C_1=O(\|v\|^2).
\label{bbm7}
\end{equation}

A Taylor expansion  at   $u(\cdot +\omega(u))=\phi_k+v$ now yields
$$F_k(u)=F_k(u(\cdot+\omega(u)))=F_k(\phi_k)+\langle F_k'(\phi_k),v\rangle+\frac{1}{2}\langle F_k''(\phi_k)v,v\rangle + o(\|v\|^2).$$
Since $F'_k(\phi_k)=0$ and $F_k''(\phi_k)=\mathcal{L}_k $, we have 
\begin{equation}
F_k(u)-F_k(\phi_k)=\frac{1}{2}\langle \mathcal{L}_k  v,v\rangle+o(\|v\|^2).
\label{bbm8}
\end{equation}
Note that the equality $v=C_1M_k'(\phi_k)+y$ provides
\begin{equation}\label{bbm8.1}
\langle \mathcal{L}_k  v,v\rangle  = C_1^2 \langle\mathcal{L}_k  M'_k(\phi_k),M'_k(\phi_k)\rangle+ 2 C_1\langle \mathcal{L}_k  M'_k(\phi_k), y\rangle +\langle\mathcal{L}_k  y,y\rangle.
\end{equation}
In view of \eqref{bbm7}, we obtain positive constants $C_2, C_3$, and $C_4$, depending only on $k$, such that
$$|C_1^2 \langle\mathcal{L}_k  M'_k(\phi_k),M'_k(\phi_k)\rangle|\leq C_2\|v\|^4$$
and 
\[
\begin{split}
\ds |2C_1\langle \mathcal{L}_k  M'_k(\phi_k), y\rangle|& \ds\leq 2 |C_1| \|\mathcal{L}_k  M'_k(\phi_k)\|\|y\|\\
&\ds \leq 2 |C_1| \|\mathcal{L}_k  M'_k(\phi_k)\|\Big(\|y+C_1M_k'(\phi_k)\|+\|C_1M_k'(\phi_k)\|\Big)\\
&\ds\leq C_3\|v\|^3+C_4\|v\|^4.\\
\end{split}
\]
This last two inequalities together with \eqref{bbm8.1} imply
\begin{equation}\langle \mathcal{L}_k  v,v\rangle=\langle\mathcal{L}_k  y,y\rangle+ o(\|v\|^2).
\label{bbm9}
\end{equation}
Therefore, combining (\ref{bbm8}) with (\ref{bbm9}), we obtain
$$F_k(u)-F_k(\phi_k)=\frac{1}{2}\langle \mathcal{L}_k  y,y\rangle+o(\|v\|^2).$$
By using that $y\in\mathcal{T}_k$, we have  $y\in\mathcal{A}$. Thus, Lemma \ref{bbm15} gives
$$\langle \mathcal{L}_k  y,y\rangle \geq C \| y\|_{H^{s_2/2}_{per}}^2,$$ 
and, consequently,
\begin{equation}
F_k(u)-F_k(\phi_k)\geq C\|y\|_{H^{s_2/2}_{per}}^2+o(\|v\|^2).
\label{bbm12}
\end{equation}
By using the definition of $v$ and \eqref{bbm7} it is easily seen that 
\begin{equation}
\|y\|_{H^{s_2/2}_{per}}^2\geq \|v\|_{H^{s_2/2}_{per}}^2+o(\|v\|_{H^{s_2/2}_{per}}^2),
\label{bbm13}
\end{equation}
provided $v$ is small enough (if necessary we can take a smaller $\varepsilon>0$).

Finally,  (\ref{bbm12}) and (\ref{bbm13}) combine to establish that
$$
F_k(u)-F_k(\phi_k)\geq C\|v\|_{H^{s_2/2}_{per}}^2+o(\|v\|_{H^{s_2/2}_{per}}^2),
$$
which, for $\varepsilon>0$ sufficient small, gives
$$
F_k(u)-F_k(\phi_k)\geq C(\varepsilon)\|v\|_{H^{s_2/2}_{per}}^2\geq C(\varepsilon) \rho(u,\phi_k)^2
$$
and completes the proof of the lemma.
\end{proof}

Finally, we are in a position to prove Theorem \ref{gkdvmaintheorem}.

\begin{proof}[Proof of Theorem \ref{gkdvmaintheorem}] The proof follows classical arguments as the ones in \cite{bss} and \cite{Grillakis}. Assume by contradiction that  $\phi_k$ is  $H^{s_2/2}_{per}$-unstable. Then, we can choose $\varepsilon>0$ and initial data  $w_n:=u_n(0)\in U_{\frac{1}{n}}(\phi_k)$,   $n\in\N$,  such that
$$
\rho(w_n,\phi_k)\rightarrow 0 \quad\mbox{and}\quad \sup_{t\geq0}\rho(u_n(t),\phi_k)\geq \varepsilon,
$$
where $u_n(t)$ is the solution of  (\ref{gkdv}) with initial data  $w_n$. Here, by taking a smaller $\varepsilon$ if necessary,  we can assume that $\varepsilon>0$ is the one obtained in  Lemma \ref{lemacoercividadebbm}.  By the continuity of the solution in $t$, we can take the first time  $t_n>0$ such that
\begin{equation}
\rho(u_n(t_n),\phi_k)=\frac{\varepsilon}{2}.
\label{bbm16}
\end{equation}

The strategy now is to obtain a contradiction with  (\ref{bbm16}), for $n$ sufficiently large. Let $f_n$ be the function defined as
\[
\begin{split}
f_n(\alpha)&=M_k(\alpha u_n(t_n))\\
&=\alpha^2\frac{\partial c}{\partial k}\frac{1}{2}\int_0^L|u_n(t_n)|^2dx+\alpha\frac{\partial A}{\partial k}\int_0^Lu_n(t_n)dx. 
\end{split}
\]
Since $Q$ and $V$ are conserved quantities, we then see that
\begin{equation}\label{falene}
\begin{split}
f_n(\alpha)&=\alpha^2\frac{\partial c}{\partial k}\frac{1}{2}\int_0^L|w_n|^2dx+\alpha\frac{\partial A}{\partial k}\int_0^Lw_ndx\\
&=\alpha^2Q_k(w_n)+\alpha V_k(w_n),
\end{split}
\end{equation}
where we have denoted
$$
Q_k(u):=\frac{\partial c}{\partial k} Q(u)\quad \mbox{and}\quad V_k(u):=\frac{\partial A}{\partial k} V(u).
$$
On the other hand, since $\rho(w_n,\phi_k)\to0$, as $n\to\infty$, $Q$ and $V$ are invariant by translations and continuous, we have
\begin{equation}\label{bbm18}
Q_k(w_n)\longrightarrow Q_k(\phi_k)=:a,\qquad
V_k(w_n)\longrightarrow V_k(\phi_k)=:b.
\end{equation}
Hence, for any $\alpha\in\R$, we obtain from \eqref{falene},
\begin{equation}\label{convpont}
f_n(\alpha)\to f(\alpha),
\end{equation}
where
$$
f(\alpha)=\alpha^2Q_k(\phi_k)+\alpha V_k(\phi_k)=\alpha^2a+\alpha b.
$$

Before proceeding, we shall show that under assumption {\bf (H4)} there exist  real sequences $(\alpha_n)$ such that $M_k(\alpha_n u_n(t_n))=M_k(\phi_k)$.

\begin{lemma}\label{lemmaconv1}
Assume that {\bf (H4)} holds. We have the following.
\begin{itemize}
\item[(i)] If $\frac{\partial c}{\partial k}\neq0$ then there exist two sequences $(\alpha_n)$ and $(\tilde{\alpha}_n)$, and real numbers $\theta_0<\theta_1$ such that, for all $n$ sufficiently large,
\begin{equation}\label{alphaalphatilde}
M_k(\alpha_n u_n(t_n))=M_k(\tilde{\alpha}_n u_n(t_n))=M_k(\phi_k)
\end{equation}
and
\begin{equation}\label{alpha0alpha1}
\tilde{\alpha}_n\leq \theta_0<\theta_1\leq \alpha_n.
\end{equation}
In addition, up to a subsequence, either $(\alpha_n)$ or $(\tilde{\alpha}_n)$ converges to 1.
\item[(ii)] If $\frac{\partial c}{\partial k}=0$ then there exists a sequence $(\alpha_n)$  such that, for all $n\in \N$,
$$
M_k(\alpha_n u_n(t_n))=M_k(\phi_k).
$$
In addition, $(\alpha_n)$ converges to 1.
\end{itemize}
\end{lemma}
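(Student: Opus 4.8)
The plan is to reduce the whole statement to elementary facts about the quadratic $f_n$ and its pointwise limit $f$. The starting observation is the identity $M_k(\phi_k)=Q_k(\phi_k)+V_k(\phi_k)=a+b=f(1)$, so that the equation $M_k(\alpha u_n(t_n))=M_k(\phi_k)$ is, via \eqref{falene}, nothing but $f_n(\alpha)=f(1)$; by \eqref{bbm18} the coefficients of $f_n$ converge to those of $f$, hence $f_n\to f$ uniformly on compact subsets of $\R$ (which sharpens the pointwise \eqref{convpont}). Everything then follows by locating the roots of $f_n(\cdot)-f(1)$ from those of $f(\cdot)-f(1)$, the key input being that \textbf{(H4)} forces the two roots of the latter to be distinct.

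For part (i): since the curve is nontrivial, $Q(\phi_k)>0$, so $a=\frac{\partial c}{\partial k}Q(\phi_k)\neq0$ and $f$ is a nondegenerate parabola. Factoring, $f(\alpha)-f(1)=(\alpha-1)\bigl(a(\alpha+1)+b\bigr)$, so the roots of $f(\alpha)=f(1)$ are $\alpha=1$ and $\alpha^{\ast}:=-1-b/a$, and a one-line computation shows that $\alpha^{\ast}=1$ is equivalent to $2a+b=0$, i.e.\ to $M_k(\phi_k)=-\frac{\partial c}{\partial k}Q(\phi_k)$. Thus \textbf{(H4)} says exactly that $\alpha^{\ast}\neq1$, equivalently that $f(1)$ is not the vertex value of $f$, so that $1$ and $\alpha^{\ast}$ are two simple zeros of $\alpha\mapsto f(\alpha)-f(1)$, at each of which this function changes sign. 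Now I would fix $\delta>0$ with $2\delta<|1-\alpha^{\ast}|$; then $f(\cdot)-f(1)$ takes values of opposite sign, bounded away from $0$, at the two endpoints of each of the disjoint intervals $(\alpha^{\ast}-\delta,\alpha^{\ast}+\delta)$ and $(1-\delta,1+\delta)$. Since $Q(w_n)\to Q(\phi_k)>0$, for $n$ large $f_n$ is a parabola whose leading coefficient has the same sign as that of $f$, and by uniform convergence on the compact interval spanned by these four points, $f_n(\cdot)-f(1)$ has the same signs there for all large $n$. The intermediate value theorem then yields a zero of $f_n(\cdot)-f(1)$ in $(\alpha^{\ast}-\delta,\alpha^{\ast}+\delta)$ and one in $(1-\delta,1+\delta)$; since a quadratic has at most two zeros, these are the unique zeros in the respective intervals. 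Calling the smaller one $\tilde\alpha_n$ and the larger one $\alpha_n$ gives \eqref{alphaalphatilde}, and choosing $\theta_0<\theta_1$ in the gap between the two $\delta$-neighbourhoods yields \eqref{alpha0alpha1}. Finally, letting $\delta\downarrow0$ shows that whichever of $\alpha_n,\tilde\alpha_n$ is the root near $1$ (namely $\alpha_n$ if $\alpha^{\ast}<1$, and $\tilde\alpha_n$ if $\alpha^{\ast}>1$) converges to $1$, which is the last assertion (in fact without passing to a subsequence).

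For part (ii): here $\frac{\partial c}{\partial k}=0$ gives $Q_k\equiv0$ and $a=0$, so $f_n(\alpha)=\alpha\,V_k(w_n)$ and $f(\alpha)=\alpha b$; in this case \textbf{(H4)} reads $M_k(\phi_k)=b\neq0$, whence $V_k(w_n)\to b\neq0$ and $V_k(w_n)\neq0$ for all large $n$ (discarding finitely many terms if needed). Then $\alpha_n:=M_k(\phi_k)/V_k(w_n)$ is the unique solution of $f_n(\alpha_n)=M_k(\phi_k)$, and $\alpha_n\to b/b=1$.

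The calculations are routine; the only genuine point is the equivalence \enquote{\textbf{(H4)} $\iff$ $f(1)$ is not the vertex value of $f$}, i.e.\ transversality of the level $f(1)$ for the limit parabola. Once that is in place, the transfer to $f_n$ is a standard continuity/intermediate-value argument, and the separation \eqref{alpha0alpha1} together with the convergence to $1$ follow automatically because the two limiting roots $1$ and $\alpha^{\ast}$ are distinct and $n$-independent. I do not expect a real obstacle here, only the bookkeeping of the two subcases $\alpha^{\ast}<1$ and $\alpha^{\ast}>1$ and the choice of $\delta,\theta_0,\theta_1$ independently of $n$.
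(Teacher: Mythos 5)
Your proof is correct, and it reaches the conclusion by a more direct route than the paper. Both arguments start from the same reduction via \eqref{falene} and \eqref{bbm18} — the equation to solve is $f_n(\alpha)=a+b=f(1)$ for a quadratic $f_n$ whose coefficients converge to those of $f$ — and both use {\bf (H4)} in the same essential way: you phrase it as the two roots $1$ and $\alpha^{\ast}=-1-b/a$ of $f(\alpha)=f(1)$ being distinct, while the paper phrases it as $a+b$ not being the vertex value of $f$. The mechanics then diverge. The paper (after reducing WLOG to $\partial c/\partial k>0$) shows that $f_n$ dips below $a+b$ on a $\delta$-neighborhood of the vertex $x_0$ of $f$, so the upward parabola $f_n$ has one root on each side of that neighborhood; convergence to $1$ is then obtained indirectly, by proving boundedness of the root sequences through the conservation-law computation \eqref{bbm19}--\eqref{bbm20}, extracting convergent subsequences, identifying the possible limits as $1$ or $-(a+b)/a$, and using the separation \eqref{alpha0alpha1} to rule out both limits coinciding. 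You instead localize the roots of $f_n-f(1)$ directly in $\delta$-neighborhoods of $1$ and $\alpha^{\ast}$ by uniform convergence on compacts, the intermediate value theorem, and the fact that a nondegenerate quadratic has at most two roots; this yields \eqref{alphaalphatilde}--\eqref{alpha0alpha1} with $\theta_0,\theta_1$ in the gap between the neighborhoods, treats both signs of $\partial c/\partial k$ simultaneously, dispenses with the boundedness/subsequence step, and gives a slightly stronger conclusion (the root near $1$ converges to $1$ along the full sequence, and you identify which of $\alpha_n,\tilde\alpha_n$ it is according to the sign of $\alpha^{\ast}-1$). Part (ii) is the same in substance in both proofs (your explicit $\alpha_n=b/V_k(w_n)$ versus the estimate \eqref{alconc0}); your remark about discarding finitely many terms so that $V_k(w_n)\neq0$ is a point the paper glosses over as well, and is harmless for the way the lemma is used. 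The only small step you should spell out is the ``$\delta\downarrow0$'' claim: for any $\delta'<\delta$ the same sign argument applies, and since $f_n-f(1)$ has exactly one root in the $\delta$-neighborhood of $1$, that root must already lie in the $\delta'$-neighborhood for $n$ large; this is routine and does not affect correctness.
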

\begin{proof}
(i) Without loss of generality, we shall assume that  $\frac{\partial c}{\partial k}>0$. The case, $\frac{\partial c}{\partial k}<0$ can be treated exactly in the same manner with obvious modifications. First of all note that $a=Q_k(\phi_k)>0$. Thus the parabola $f$ has a minimum at $x_0=-b/2a$ with minimum value $f(x_0)=-b^2/4a$. In addition, note that
$$
f(0)=f\left( -\dfrac{b}{a}\right)=0, \qquad f(1)=f\left( -\dfrac{b+a}{a}\right)=a+b,
$$
and $a+b=M_k(\phi_k)$ is not the minimum value of $f$, otherwise we would have $a+b=-a$, contradicting assumption {\bf (H4)}.

Now, fix any real number $\beta$ satisfying $f(x_0)<\beta<a+b$ and let $K\subset\R$ be a compact set containing the interval $[x_0-1,x_0+1]$. Since $f_n\to f$ in $\R$, we see that $f_n\to f$ uniformly in $K$. Thus, there is $N\in\N$ such that for any $\alpha\in K$,
$$
n\geq N\quad \Rightarrow \quad |f_n(\alpha)-f(\alpha)|<\dfrac{a+b-\beta}{2}.
$$
Moreover, the continuity of $f$ at $x_0$ implies the existence of $\delta\in(0,1)$ such that
$$
|\alpha-x_0|<\delta\quad \Rightarrow \quad |f(\alpha)-f(x_0)|<\dfrac{a+b-\beta}{2}.
$$
Consequently, if $|\alpha-x_0|<\delta$ and $n\geq N$, we deduce
\[
\begin{split}
f_n(\alpha)&\leq |f_n(\alpha)-f(\alpha)|+|f(\alpha)-f(x_0)|+f(x_0)\\
&< a+b+f(x_0)-\beta\\
&<a+b.
\end{split}
\]
This mean that $a+b$ is not the minimum value of the parabola $f_n$. Hence, for $n\geq N$, there are real numbers $\tilde{\alpha}_n$ and $\alpha_n$ satisfying
$$
\tilde{\alpha}_n\leq x_0-\delta<x_0+\delta\leq \alpha_n,
$$
such that $f_n(\tilde{\alpha}_n)=f_n(\alpha_n)=a+b$. By taking $\theta_0=x_0-\delta$ and $\theta_1=x_0+\delta$ we obtain \eqref{alphaalphatilde} and \eqref{alpha0alpha1}.

It remains to show that either $(\alpha_n)$ or $(\tilde{\alpha}_n)$ admit a subsequence converging to 1. Since $Q$ and $V$ are conserved quantities, using \eqref{alphaalphatilde}, we have
\begin{equation}\begin{split}
\varrho& :=|\alpha_n^2Q_k(w_n)+\alpha_nV_k(w_n)-(Q_k(w_n)+V_k(w_n))|\\
&=|\alpha_n^2Q_k(u_n(t_n))+\alpha_nV_k(u_n(t_n))-(Q_k(w_n)+V_k(w_n))|\\
&=|Q_k(\alpha_nu_n(t_n))+V_k(\alpha_nu_n(t_n))-(Q_k(w_n)+V_k(w_n))|\\
&=|M_k(\alpha_nu_n(t_n))-M_k(w_n)|\\
&=|M_k(\phi_k)-M_k(w_n)|.
\end{split}
\label{bbm19}
\end{equation}

From \eqref{bbm18}, it follows that $\varrho\to0$, as $n\to\infty$.
Since
\begin{equation}\label{19.1}
\begin{split}
0&\leq |\alpha_n^2Q_k(w_n)+\alpha_nV_k(w_n)-(a+b)|\\
&\leq |\alpha_n^2Q_k(w_n)+\alpha_nV_k(w_n)-(Q_k(w_n)+V_k(w_n))|\\
&\quad + |(Q_k(w_n)+V_k(w_n))-(a+b)|\\
&\leq \varrho +|Q_k(w_n)-a| + |V_k(w_n)-b|,
\end{split}
\end{equation}
by using \eqref{bbm18},   we see that
\begin{equation}
z_n:=\alpha_n^2Q_k(w_n)+\alpha_nV_k(w_n) \longrightarrow a+b, \quad \mbox{as} \;\; n\rightarrow \infty.
\label{bbm20}
\end{equation}

Next we claim that $(\alpha_n)$ is a bounded sequence. On the contrary, suppose  $(\alpha_n)$ is unbounded. Because $Q_k(w_n)>0$ and $Q_k(w_n)$ and $V_k(w_n)$ are bounded  we obtain, up to a subsequence,
$$ 
z_n=\alpha_n(\alpha_nQ_k(w_n)+V_k(w_n))\longrightarrow +\infty, \quad \mbox{as} \;\; n\rightarrow \infty,$$
which contradicts \eqref{bbm20}. 

It is clear that by defining $\tilde{z}_n:=\tilde{\alpha}_n^2Q_k(w_n)+\tilde{\alpha}_nV_k(w_n)$, the same analysis can be performed to conclude that the sequence $(\tilde{\alpha}_n)$ is also bounded.

Therefore, there are subsequences of  $(\alpha_n)$ and $(\tilde{\alpha}_n)$, which we still denote by $(\alpha_n)$ and $(\tilde{\alpha}_n)$ such that
$$
\alpha_n\longrightarrow \alpha_0, \quad \mbox{and} \quad \tilde{\alpha}_n\longrightarrow \tilde{\alpha}_0, \quad \mbox{as} \;\; n\rightarrow \infty.
$$
Taking the limit in $z_n$ and $\tilde{z}_n$, it follows from \eqref{bbm18}  that
$\alpha_0^2a+\alpha_0b=a+b$ and $\tilde{\alpha_0}^2a+\tilde{\alpha}_0b=a+b$. 
This implies that
$$
\alpha_0=1 \quad \mbox{or}\quad \alpha_0=-\frac{b+a}{a}
$$
and 
$$
\tilde{\alpha}_0=1 \quad \mbox{or}\quad \tilde{\alpha}_0=-\frac{b+a}{a}.
$$
The inequalities \eqref{alpha0alpha1} imply that both sequences cannot converge to the same number. Thus we have either $\alpha_0=1$ or $\tilde{\alpha}_0=1$.

(ii) In this case, we have $a=0$ and, in view of assumption {\bf (H4)}, $b\neq0$. Thus $f_n$ and $f$ are linear functions passing through the origin. It is then clear that there is a sequence $(\alpha_n)$ such that $M_k(\alpha_n u_n(t_n))=f_n(\alpha_n)=b=M_k(\phi_k)$. As before, we conclude that $z_n:=\alpha_nV_k(w_n)\to b$, as $n\to\infty$. Thus,
\begin{equation}\label{alconc0}
|\alpha_n-1||V_k(w_n)|\leq |z_n-b|+|V_k(w_n)-b|.
\end{equation}
Since the right-hand side of \eqref{alconc0} goes to zero and $V_k(w_n)\to b\neq0$, we obtain that $\alpha_n \to1$. The proof of the lemma is thus completed.
\end{proof}

Next, we turn to the proof of Theorem \ref{gkdvmaintheorem} and assume, without loss of generality, that the sequence $(\alpha_n)$ converges to $1$.  First we prove the following two claims.

\vskip.3cm
\noindent {\bf{Claim 1:}} $\rho(u_n(t_n),\alpha_nu_n(t_n))\longrightarrow 0$, as $ n\rightarrow \infty.$

In fact, by definition,
\begin{equation}
\begin{split}
\ds\rho(u_n(t_n),\alpha_nu_n(t_n))&\ds=\inf_{r\in\mathbb{R}}\|u_n(\cdot,t_n)-\alpha_nu_n(\cdot+r,t_n)\|_{H^{s_2/2}_{per}}\\
&\leq \|u_n(t_n)-\alpha_nu_n(t_n)\|_{H^{s_2/2}_{per}}=|(1-\alpha_n)|\|u_n(t_n)\|_{H^{s_2/2}_{per}}.\\
\end{split}
\label{bbm21}
\end{equation}
On the other hand, since $\ds\rho(u_n(t_n),\phi_k)=\frac{\varepsilon}{2}$, there exists $r\in\mathbb{R}$ such that
$$
\|u_n(t_n)\|_{H^{s_2/2}_{per}}\leq \|u_n(t_n)-\phi_k(\cdot+r)\|_{H^{s_2/2}_{per}}+\|\phi_k(\cdot+r)\|_{H^{s_2/2}_{per}}<\varepsilon+\|\phi_k(\cdot+r)\|_{H^{s_2/2}_{per}},
$$
which is to say that the sequence $(\|u_n(t_n)\|_{H^{s_2/2}_{per}})$ is uniformly bounded. Taking the limit in \eqref{bbm21}, as $n\to\infty$, and taking into account that $\alpha_n\longrightarrow 1$, we obtain the claim.

\vskip.3cm
\noindent {\bf{Claim 2:}} $\rho(\alpha_nu_n(t_n),\phi_k)\longrightarrow 0$, as $n\rightarrow\infty$.

In fact, from Claim 1 and (\ref{bbm16}), we see that
$$
\rho(\alpha_nu_n(t_n),\phi_k)\leq \rho(\alpha_nu_n(t_n),u_n(t_n))+\rho(u_n(t_n),\phi_k)<\frac{\varepsilon}{3}+\frac{\varepsilon}{2}=\frac{5\varepsilon}{6}<\varepsilon,
$$
for all $n$ large enough. This means that for $n$ large enough, $\alpha_nu_n(t_n)\in U_\varepsilon(\phi_k)$. Moreover, Lemma \ref{lemmaconv1} implies that $\alpha_nu_n(t_n)\in\Sigma_k$. By putting all this together, we obtain that   $\alpha_nu_n(t_n)\in\Sigma_k\cap U_{\varepsilon}(\phi_k)$. Consequently,   Lemma \ref{lemacoercividadebbm} implies
\[
\begin{split}
\rho(\alpha_nu_n(t_n),\phi_k)^2&\leq C|F_k(\alpha_nu_n(t_n))-F_k(\phi_k)|\\
&\leq C|F_k(\alpha_nu_n(t_n))-F_k(u_n(t_n))|+ C|F_k(u_n(t_n))-F_k(\phi_k)|\\
&=C|F_k(\alpha_nu_n(t_n))-F_k(u_n(t_n))|+ C|F_k(w_n)-F_k(\phi_k)|.
\end{split}
\]
Taking the limit, as $n\to\infty$, in the last inequality, the continuity of $F_k$ and the boundedness of $(\alpha_nu_n(t_n))_{n\in\mathbb{N}}$ in $H^{s_2/2}_{per}([0,L])$
 yield Claim 2.
\vskip.3cm

Finally, Claims 1 and 2 combine to give
$$
\frac{\varepsilon}{2}=\rho(u_n(t_n),\phi_k)\leq \rho(u_n(t_n),\alpha_nu_n(t_n))+\rho(\alpha_nu_n(t_n),\phi_k) \longrightarrow 0,
$$
as $n\to\infty$, which is a contradiction. The proof of Theorem \ref{gkdvmaintheorem} is thus established.
\end{proof}

\section{Applications}\label{applic}

In this section we apply Theorem \ref{gkdvmaintheorem} to prove the orbital stability, in the energy space, for some well known dispersive models. As is well known, one of the major difficulties in the theory is to check that the spectral properties assumed in  {\bf (H1)-(H2)} hold. In many of the interesting applications the function $f$ in \eqref{gkdv} is a power or a polynomial. Here and in what follows we shall assume that it has this form.

Let us brief recall the main spectral properties of $\Lk$. The spectrum of $\mathcal{L}_k$ is formed by a sequence of eigenvalues, say, $\{\lambda_m\}_{m=0}^\infty$ satisfying $\lambda_0\leq\lambda_1\leq\lambda_2\leq\ldots$, and $\lambda_m\to\infty$, as $m\to\infty$, where equality means multiplicity of an eigenvalue (see e.g., \cite{AN2} and \cite{ea}). From \eqref{soleq} it easily follows that zero is an eigenvalue with associated eigenfunction $\phi_k'$. Thus, the task in general is to show that $\lambda_0<\lambda_1=0<\lambda_2$. Below we recall three different ways of determining this relation.

(i) {\it Lam\'e's type potential}. 
In many situations when $\mathcal{M}$ is a second order differential operator and the periodic traveling wave under study depends on the Jacobian elliptic functions, $\mathcal{L}_k$ turns out to be a Hill's operator with a Lam\'e type potential. In particular, studying the spectrum of $\mathcal{L}_k$ is equivalent to studying the eigenvalue problem 
\begin{equation}\label{specproblem}
\left\{\begin{array}{l}
\ds\Lambda''(x)+\left[h-n(n+1)\cdot k^2 \sn^2\left(x,k\right)\right]\Lambda(x)=0,\\
\Lambda(0)=\Lambda(2K(k)), \quad \Lambda'(0)=\Lambda'( 2K(k)),
\end{array}\right.
\end{equation}
where $h$ is a real parameter and $n$ is a non-negative integer. Depending on $n$, the first eigenvalues of \eqref{specproblem} are well known (see e.g., \cite{ince}). Many applications using this approach have appeared in the literature (see e.g., \cite{angulo5}, \cite{angulo3}, \cite{angulo1}, \cite{boka}, \cite{depas}, \cite{hik}, \cite{kade}, \cite{NP1}, \cite{natali-pastor} to cite but a few).

(ii) {\it Neves' approach.} Assume that $\mathcal{M}$ is a second order differential operator. Let us first recall from Floquet's theorem (see e.g., \cite{Magnus} page 4) that if $y$ is any solution of $\Lk y=0$, linearly independent of $\phi_k'$, then there exists a constant $\theta$ satisfying 
\begin{equation}\label{thetadef}
y(x+L)=y(x)+\theta \phi_k'(x).
\end{equation}
In particular, if $y$ satisfies the initial condition  $y'(0)=0$ then
by taking the derivative with respect to $x$ in both sides of \eqref{thetadef} and evaluating the result at $x=0$, we see that
\begin{equation}\label{thetadef1}
\theta=\dfrac{y'(L)}{\phi_k''(0)}.
\end{equation}
Under these conditions Theorem 3.1 in \cite{Neves1} (see also \cite{natali2}) states that $\lambda_1$ is simple if and only if $\theta\neq0$. In addition, $\lambda_1=0$ if and only if $\theta<0$. 

(iii) {\it Angulo and Natali's approach}.
When $f(x)=x^p$, for some integer $p\geq1$, a different approach to check {\bf (H1)-(H2)} was established in \cite{AN2}. Such an approach is based on the total positivity theory (see e.g., \cite{kar}) and can be viewed as an extension to the periodic case of the results in \cite{alb} and \cite{albbo}. To give the precise statement, we recall that a sequence $\{\alpha_n\}_{n\in\Z}$ of real numbers is said to be in the class $PF(2)$ discrete if
\begin{itemize}
\item[(i)] $\alpha_n>0$, for all $n\in\Z$;
\item[(ii)] $\alpha_{n_1-m_1}\alpha_{n_2-m_2}-\alpha_{n_1-m_2}\alpha_{n_2-m_1}>0$, for $n_1<n_2$ and $m_1<m_2$.
\end{itemize}

Assume that our assumptions on the operator $\mathcal{M}$ hold. Suppose in addition that $\phi_k$ is positive, even and such that $\widehat{\phi}_k>0$ and $\widehat{\phi_k^p}$ belongs to the class $PF(2)$ discrete, then $\mathcal{L}_k$ satisfies {\bf (H1)-(H2)} (see \cite[Theorem 4.1]{AN2}).

Next we will give some applications of Theorem \eqref{gkdvmaintheorem}.

\subsection{The KdV equation}

This subsection is devoted to the study of the KdV equation
\begin{equation}\label{kdvkdv}
u_t+u_{xxx}+\partial_x\left(\frac{u^2}{2}\right)=0,
\end{equation}
which appears as an approximated equation for the propagation of unidirectional, one-dimensional, small-amplitude long waves in a nonlinear dispersive media and it was firstly derived by Korteweg and de Vries in \cite{kdv}.

Since $\mathcal{M}=-\partial_x^2$. It is clear that our assumption on $\mathcal{M}$ are fulfilled with $s_1=s_2=2$ and $\gamma=0$. Hence, our energy space is the Sobolev space of order 1. As an application of the quadrature method, it is well known that \eqref{kdvkdv} has a periodic traveling-wave solutions $u(x,t)=\phi(x-ct)$ with
\begin{equation}\label{cnkdv}
\phi(y)=\phi_k(y)=12k^2b^2\cn^2(by,k), \qquad k\in(0,1).
\end{equation}
This means that $\phi_k$ is a solution of
\begin{equation}\label{appkdv}
-\phi_k+c\phi_k-\frac{1}{2}\phi_k^2+A=0,
\end{equation}
where
$$
c=4b^2(2k^2-1) \quad \mbox{and} \quad  A=24b^4k^2(1-k^2).
$$
Here $b\in\R$ is an arbitrary parameter. In order to obtain periodic solutions with a fixed period $L>0$, for any $k\in(0,1)$, we shall take
$$
b:=\frac{2K(k)}{L}.
$$
Since $\cn^2$ has fundamental period $2K(k)$, with this choice of $b$, the functions in \eqref{cnkdv} turn out to be $L$-periodic.
Also, to have our assumption $c>-\gamma=0$ in {\bf(H0)}, we need to restrict the elliptic modulus to the interval $J:=(k^*,1)$, where $k^*=\sqrt{2}/2$. This constructions then give the family of $L$-periodic solutions
$$
k\in J=(k^*,1)\mapsto \phi_k\in H^2_{per}([0,L]).
$$
The assumption  {\bf (H0)} is thus fulfilled.

Let us check {\bf (H1)} and {\bf (H2)}. Here we have $\Lk=-\partial_x^2+c-\phi_k$. It is not difficult to see that studying the spectral problem
\begin{equation}\label{specproblemkdv}
\left\{\begin{array}{l}
\ds\Lk f=\lambda f,\\
f(0)=f(L), \quad f'(0)=f'(L),
\end{array}\right.
\end{equation}
is equivalent to study the problem
\begin{equation}\label{specproblemkdv1}
\left\{\begin{array}{l}
\ds\Lambda''(x)+\left[h-12\cdot k^2 \sn^2\left(x,k\right)\right]\Lambda(x)=0,\\
\Lambda(0)=\Lambda(2K(k)), \quad \Lambda'(0)=\Lambda'( 2K(k)),
\end{array}\right.
\end{equation}
where $h=(12k^2b^2+\lambda-c)/b^2$.
It is well known that the first three eigenvalues of \eqref{specproblemkdv1} are simple and given by (see \cite{angulo1} and \cite{ince})
$$
h_0 = 2+5k^2 -2 \sqrt{1 - k^2 + 4k^4},\quad h_1=4+4k^2, \quad h_2 = 2+5k^2 +2 \sqrt{1 - k^2 + 4k^4}.
$$
Note that $h_1$ is an eigenvalue of \eqref{specproblemkdv1} if and only if $\lambda_1=0$ is an eigenvalue of \eqref{specproblemkdv}. Since $h_0<h_1<h_2$, the relation between $h$ and $\lambda$ then implies that $\lambda_1=0$ is a simple eigenvalue of $\Lk$, which in turn also implies that $\Lk$ has a unique negative eigenvalue. Assumptions {\bf (H1)-(H2)} are thus checked.

To check {\bf (H3)} we differentiate \eqref{appkdv} to see that 
$$
\Phi:=\left\langle\mathcal{ L}_k\left(\frac{\partial \phi_k}{\partial k}\right),\frac{\partial \phi_k}{\partial k}\right\rangle=-\frac{1}{2}\frac{\partial c}{\partial k}\frac{d}{dk}\left(\int_0^L\phi_k^2\,dx\right)-\frac{\partial A}{\partial k}\frac{d}{dk}\left(\int_0^L\phi_k\,dx\right).
$$
Using formulae 312.02 and 312.04 in \cite{friedman}, we obtain
$$
\int_0^L\phi_k(x)dx=\frac{48K(k)}{L}\Big(E(k)-(1-k^2)K(k)\Big)
$$
and
$$
\int_0^L\phi_k^2(x)dx=\frac{48^2K^3(k)}{3L^3}\Big( (2-5k^2+3k^2)K(k)+(4k^2-2)E(k) \Big).
$$
Thus, using the expressions for $c$ and $A$ we see that $\Phi<0$ is equivalent to
\[
\begin{split}
\frac{16\cdot 48^2}{3L^3}\frac{d}{dk}& \Big((2k^2-1)K(k)\Big)\frac{d}{dk}\Big( (2-5k^2+3k^2)K(k)^4+(4k^2-2)E(k)K(k)^3 \Big)\\
&+\frac{18432}{L^5}\frac{d}{dk}\Big( k^2(1-k^2)K(k)^4\Big)\frac{d}{dk}\Big(E(k)K(k)-(1-k^2)K(k)^2\Big)>0.
\end{split}
\]
The positivity of this quantity can be checked numerically (easy) or analytically (hard) using Taylor expansions of the elliptic functions (see \cite{depas} for similar calculations). This shows {\bf (H3)}.

Finally, one can easily verify {\bf (H4)} by noting that $c$ and $A$ have positive derivatives and $\phi_k$ is non-negative. 

Thus an application of Theorem \ref{gkdvmaintheorem} gives the following result.

\begin{theorem}\label{kdvstabteo}
For each $k\in J=(k^*,1)$, the periodic traveling wave $\phi_k$ given in \eqref{cnkdv}
 is orbitally stable  in $H^{1}_{per}([0,L])$.
\end{theorem}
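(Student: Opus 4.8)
The plan is to verify the five hypotheses \textbf{(H0)}--\textbf{(H4)} of Theorem \ref{gkdvmaintheorem} for the cnoidal profile \eqref{cnkdv} and then simply invoke that theorem. First I would establish \textbf{(H0)}: one checks by direct substitution (or quotes the classical quadrature computation) that $\phi_k(y)=12k^2b^2\cn^2(by,k)$ solves \eqref{appkdv} with $c=4b^2(2k^2-1)$ and $A=24b^4k^2(1-k^2)$; choosing $b=2K(k)/L$ forces the fundamental period to be $L$, and since $\cn^2$, $K$, $E$ are real-analytic in $k\in(0,1)$, the map $k\mapsto\phi_k$ is a smooth curve into $H^2_{per}([0,L])$. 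The constraint $c=c(k)>-\gamma=0$ translates into $2k^2-1>0$, i.e.\ $k>k^*=\sqrt2/2$, so the correct parameter interval is $J=(k^*,1)$. The functions $c(k)$ and $A(k)$ are manifestly $C^1$ on $J$.

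Next, \textbf{(H1)}--\textbf{(H2)}. Since $\mathcal M=-\partial_x^2$ and $f(\phi)=\phi^2/2$, the linearized operator is $\mathcal L_k=-\partial_x^2+c-\phi_k$, a Hill operator with a Lamé-type potential of the form $h-n(n+1)k^2\sn^2$ with $n=3$ (because $12=3\cdot4$), after the substitution $x\mapsto bx$ and setting $h=(12k^2b^2+\lambda-c)/b^2$; this puts us squarely in case (i) of the spectral toolbox recalled above, with spectral problem \eqref{specproblemkdv1}. The first three eigenvalues $h_0<h_1<h_2$ of \eqref{specproblemkdv1} are classically known (Ince; see also \cite{angulo1}), with $h_1=4+4k^2$ the one corresponding to the eigenfunction $\cn\,\sn\,\dn\propto\phi_k'$; since $h\mapsto\lambda$ is an increasing affine change of variable, $h_1\leftrightarrow\lambda_1=0$, and $h_0<h_1<h_2$ forces $\lambda_0<\lambda_1=0<\lambda_2$ with all three simple. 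Hence $\mathcal L_k$ has exactly one negative eigenvalue, which is simple, and $0$ is simple with eigenfunction $\phi_k'$. I would be careful here only to match the boundary conditions: the $L$-periodic eigenvalue problem for $\mathcal L_k$ corresponds, after rescaling, to the $2K(k)$-periodic problem \eqref{specproblemkdv1}, which is exactly the statement recorded above.

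For \textbf{(H3)} I would differentiate the profile equation \eqref{appkdv} with respect to $k$, which gives $\mathcal L_k(\partial_k\phi_k)=-\tfrac{\partial c}{\partial k}\phi_k-\tfrac{\partial A}{\partial k}$; pairing with $\partial_k\phi_k$ and using $\langle 1,\partial_k\phi_k\rangle=\tfrac{d}{dk}\int_0^L\phi_k$ and $\langle\phi_k,\partial_k\phi_k\rangle=\tfrac12\tfrac{d}{dk}\int_0^L\phi_k^2$ yields the closed-form expression for $\Phi$ displayed after \eqref{appkdv}. Substituting the explicit integrals $\int_0^L\phi_k$ and $\int_0^L\phi_k^2$ (formulae 312.02 and 312.04 in \cite{friedman}) and the expressions for $c(k),A(k)$ reduces $\Phi<0$ to the single scalar inequality written out in the excerpt, an inequality among products of derivatives of $K(k)$ and $E(k)$. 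This last inequality is the only genuinely nontrivial analytic point: one verifies it either numerically on $(k^*,1)$ or, for a rigorous proof, via the Taylor/series expansions of $K$ and $E$ near $k=0$ and $k=1$ and monotonicity estimates in between, in the spirit of the computations in \cite{depas}. I expect this to be the main obstacle, since everything else is either a substitution or a quotation of known elliptic-integral formulas. Finally, \textbf{(H4)}: since $\tfrac{\partial c}{\partial k}>0$ on $J$ (as $c=4b(k)^2(2k^2-1)$ with both factors increasing) and $\phi_k\ge0$ is nontrivial, we have $M_k(\phi_k)=\tfrac{\partial c}{\partial k}Q(\phi_k)+\tfrac{\partial A}{\partial k}V(\phi_k)>0$ while $-\tfrac{\partial c}{\partial k}Q(\phi_k)<0$, so they cannot coincide; this disposes of \textbf{(H4)}. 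With \textbf{(H0)}--\textbf{(H4)} in hand, Theorem \ref{gkdvmaintheorem} applies verbatim and gives orbital stability of $\phi_k$ in $H^1_{per}([0,L])$ for every $k\in(k^*,1)$, which is the assertion of Theorem \ref{kdvstabteo}.
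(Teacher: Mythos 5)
Your proposal follows the paper's own proof essentially verbatim: verify {\bf (H0)}--{\bf (H4)} via the quadrature construction with $b=2K(k)/L$ and $J=(k^*,1)$, the Lam\'e/Ince spectral analysis identifying $h_1=4+4k^2$ with $\lambda_1=0$ (eigenfunction $\phi_k'$), the Byrd--Friedman formulas 312.02/312.04 reducing {\bf (H3)} to a scalar elliptic-integral inequality checked numerically or by series expansion, and then invoke Theorem \ref{gkdvmaintheorem}. The only soft spot, shared with the paper, is {\bf (H4)}: your conclusion $M_k(\phi_k)>0$ also requires controlling the sign of $\dfrac{\partial A}{\partial k}V(\phi_k)$, not just $\dfrac{\partial c}{\partial k}>0$ and $\phi_k\ge 0$ (the paper handles this by asserting that $A(k)$ has positive derivative as well); apart from that the argument matches the paper's.
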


The result in Theorem \ref{kdvstabteo} is not new; the orbital stability of the cnoidal waves has already appeared in \cite{angulo1} and \cite{deka} (see also \cite{boka}, \cite{johnson1},  \cite{Neves1}). In \cite{angulo1} the authors first show that it is possible to choose the constant $A$ in \eqref{appkdv} such that the corresponding family of cnoidal waves (which were written in a different way from that in \eqref{cnkdv})  has mean zero over its fundamental period. Then, by a translation they show that it is possible to obtain a family with a fixed mean. The orbital stability with respect to small perturbations in $H^1_{per}([0,L])$ was then obtained as an adaptation of the ideas in \cite{Grillakis}.  In \cite{deka}, the authors take the advantage of the integrability of the KdV equation (and the results in \cite{boka}) to show the orbital stability of the cnoidal waves with respect to subharmonic perturbations which respect the mean value of the solution. Here, for a fixed $n\in\N$, subharmonic perturbations means perturbations in the space $H^2_{per}([0,nL])$.

\subsection{The modified KdV equation}\label{subsec3.2}

This subsection is devoted to study the modified Korteweg-de Vries (mKdV) equation
\begin{equation}
u_t+u_{xxx}+\partial_x(2u^3)=0.
\label{mkdv}
\end{equation}
The constant $2$ in $f(u)=2u^3$ appears only for convenience.
The periodic traveling waves are also those solutions of the form $u(x,t)=\phi(x-ct)$, where $\phi$ must be a solution of the ODE
\begin{equation}
\phi ''-c\phi+2\phi^3-A=0.
\label{mkdvedo}
\end{equation}
As in \eqref{kdvsoleq1}, multiplying \eqref{mkdvedo} by $\phi'$ and integrating once we obtain
\begin{equation}
(\phi')^2=-\phi^4+c\phi^2+2A\phi+B=:P(\phi),
\label{mkdv1}
\end{equation}
where $B$ another integration constant. It is then seen that the solutions of \eqref{mkdvedo} depend on the roots of the polynomial $P$.
We will study two particular family of solutions of \eqref{mkdvedo}.

\subsubsection{Dnoidal type solutions}

By assuming  that $A=0$ and $P$ has four real roots (note that $P$ is an even polynomial in this case), \eqref{mkdvedo} admits a family of solutions given by
\begin{equation}\label{dnmkdv}
\phi(y)=\phi_k(y)=a\dn(ax,k), \qquad k\in(0,1),
\end{equation}
where $a>0$ is an arbitrary constant and 
\begin{equation}\label{cmkdv}
c=a^2(2-k^2).
\end{equation}
In order to obtain periodic solutions with a fixed period $L>0$, for any $k\in(0,1)$, we now take
$$
a:=\frac{2K(k)}{L}.
$$
By recalling that $\dn$ has fundamental period $2K(k)$ we then see that the functions in \eqref{dnmkdv} have fundamental period $L$. Since $c>0$ for any $k\in(0,1)$, the assumption {\bf (H0)} holds with $J=(0,1)$.

Let us check {\bf (H1)} and {\bf (H2)}. The linearized operator reads as  $\Lk=-\partial_x^2+c-6\phi_k^2$ and the spectral problem
\begin{equation}\label{specproblemmkdv}
\left\{\begin{array}{l}
\ds\Lk f=\lambda f,\\
f(0)=f(L), \quad f'(0)=f'(L),
\end{array}\right.
\end{equation}
is equivalent to 
\begin{equation}\label{specproblemmkdv1}
\left\{\begin{array}{l}
\ds\Lambda''(x)+\left[h-6\cdot k^2 \sn^2\left(x,k\right)\right]\Lambda(x)=0,\\
\Lambda(0)=\Lambda(2K(k)), \quad \Lambda'(0)=\Lambda'( 2K(k)),
\end{array}\right.
\end{equation}
where $h=(6a^2+\lambda-c)/a^2$.
It is well known that the first three eigenvalues of \eqref{specproblemmkdv1} are simple and given by (see \cite{angulo3} and \cite{ince})
$$
h_0 = 2(1+k^2- \sqrt{1 - k^2 + k^4}),\quad h_1=4+k^2, \quad h_2 = 2(1+k^2+ \sqrt{1 - k^2 + k^4}).
$$
It is easy to see that $h_1$ correspond to the eigenvalue $\lambda_1=0$  of \eqref{specproblemmkdv}. Since $h_0<h_1<h_2$, the relation between $h$ and $\lambda$ then implies that $\lambda_1=0$ is a simple eigenvalue and $\Lk$ has a unique negative eigenvalue. Assumptions {\bf (H1)-(H2)} are hence checked.

Next, since $A=0$ we see from \eqref{mkdv1} that 
$$
\Phi:=\left\langle\mathcal{ L}_k\left(\frac{\partial \phi_k}{\partial k}\right),\frac{\partial \phi_k}{\partial k}\right\rangle=-\frac{1}{2}\frac{\partial c}{\partial k}\frac{d}{dk}\left(\int_0^L\phi_k^2\,dx\right)
$$
Using formulas 314.02 and 312.04 in \cite{friedman}, we obtain
$$
\int_0^L\phi_k^2(x)dx=\frac{8K(k)E(k)}{L}.
$$
Since $k\mapsto K(k)E(k)$ is a strictly increasing function and $\frac{\p c}{\p k}>0$ we obtain $\Phi<0$ and assumption {\bf (H3)} is fulfilled. It is clear that {\bf (H4)} holds. As a consequence of Theorem \ref{gkdvmaintheorem}, we obtain the following theorem.

\begin{theorem}\label{mkdvstabteo}
For each $k\in J=(0,1)$, the periodic traveling wave $\phi_k$ given in \eqref{dnmkdv}
 is orbitally stable  in $H^{1}_{per}([0,L])$.
\end{theorem}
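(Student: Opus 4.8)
The plan is to verify the hypotheses \textbf{(H0)--(H4)} of Theorem \ref{gkdvmaintheorem} for the dnoidal family \eqref{dnmkdv} and then invoke that theorem directly. The structure mirrors the treatment of the cnoidal waves for KdV given above. First I would record that for the mKdV equation $\mathcal{M}=-\partial_x^2$, so the symbol is $\alpha(m)=m^2$ and the assumptions on $\mathcal{M}$ hold with $s_1=s_2=2$ and $\gamma=0$; in particular the energy space is $H^1_{per}([0,L])$ and the positivity condition $c>-\gamma=0$ reduces to $c>0$. Then I would establish \textbf{(H0)}: starting from \eqref{mkdvedo} with $A=0$, the quadrature \eqref{mkdv1} with $P$ having four real roots produces the explicit solution $\phi_k(y)=a\,\dn(ay,k)$ with $c=a^2(2-k^2)$; choosing $a=2K(k)/L$ fixes the period at $L$ (since $\dn(\cdot,k)$ has fundamental period $2K(k)$), the maps $k\mapsto c(k)$ and $k\mapsto A(k)\equiv 0$ are $C^1$ on $J=(0,1)$, the curve $k\mapsto\phi_k$ is smooth into $H^2_{per}([0,L])$, and $c(k)=a^2(2-k^2)>0$ throughout $(0,1)$.

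Next I would check the spectral hypotheses \textbf{(H1)--(H2)}. Here $\mathcal{L}_k=-\partial_x^2+c-6\phi_k^2$, and using $\phi_k^2=a^2\dn^2(ax,k)$ together with the identity $\dn^2(u,k)=1-k^2\sn^2(u,k)$ and the rescaling $x\mapsto ax$, the periodic eigenvalue problem \eqref{specproblemmkdv} transforms into the Lam\'e-type problem \eqref{specproblemmkdv1} with $n(n+1)=6$, i.e.\ $n=2$, and $h=(6a^2+\lambda-c)/a^2$. Invoking the classical knowledge of the Lam\'e spectrum (as in \cite{angulo3}, \cite{ince}), the first three periodic eigenvalues $h_0<h_1<h_2$ are simple with the stated closed forms, and one checks that $h_1=4+k^2$ corresponds exactly to $\lambda=0$; since the map $\lambda\mapsto h$ is an increasing affine bijection, $\lambda_1=0$ is simple, $\lambda_0<0$ is the unique negative eigenvalue, and $\lambda_2>0$. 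Because $\phi_k'$ solves $\mathcal{L}_k\phi_k'=0$ (differentiate \eqref{mkdvedo}), it is the eigenfunction for $\lambda_1=0$. This gives \textbf{(H1)} and \textbf{(H2)}.

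For \textbf{(H3)} I would differentiate \eqref{mkdvedo} in $k$ (using $A\equiv 0$) to get $\mathcal{L}_k(\partial_k\phi_k)=\frac{\partial c}{\partial k}\phi_k$, hence $\Phi=\frac{\partial c}{\partial k}\langle\phi_k,\partial_k\phi_k\rangle=\frac12\frac{\partial c}{\partial k}\frac{d}{dk}\int_0^L\phi_k^2\,dx$; wait --- more carefully, pairing against $\partial_k\phi_k$ and using self-adjointness one obtains $\Phi=-\frac12\frac{\partial c}{\partial k}\frac{d}{dk}\left(\int_0^L\phi_k^2\,dx\right)$ as displayed in the excerpt. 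Using formulas 314.02 and 312.04 in \cite{friedman} one computes $\int_0^L\phi_k^2\,dx=\dfrac{8K(k)E(k)}{L}$. Since $k\mapsto K(k)E(k)$ is strictly increasing on $(0,1)$ and $\frac{\partial c}{\partial k}=-2a^2k+2(2-k^2)a a'$ --- more simply, since $c=(2K(k)/L)^2(2-k^2)$ and one checks $\frac{\partial c}{\partial k}>0$ on $(0,1)$ from monotonicity of $K$ --- both factors have a definite sign making $\Phi<0$. Finally \textbf{(H4)} is immediate: since $A\equiv 0$ we have $V_k:=\frac{\partial A}{\partial k}V=0$, so $M_k(\phi_k)=\frac{\partial c}{\partial k}Q(\phi_k)$, and \textbf{(H4)} requires $M_k(\phi_k)\neq-\frac{\partial c}{\partial k}Q(\phi_k)$, i.e.\ $\frac{\partial c}{\partial k}Q(\phi_k)\neq-\frac{\partial c}{\partial k}Q(\phi_k)$, which holds because $\frac{\partial c}{\partial k}>0$ and $Q(\phi_k)=\frac12\int_0^L\phi_k^2\,dx>0$ for the nontrivial wave. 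With \textbf{(H0)--(H4)} in hand, Theorem \ref{gkdvmaintheorem} yields the orbital stability of $\phi_k$ in $H^1_{per}([0,L])$ for every $k\in(0,1)$, which is the assertion of Theorem \ref{mkdvstabteo}. The only genuinely delicate point is the sign of $\Phi$, but here it collapses to the elementary monotonicity of $K(k)E(k)$ and of $c(k)$, so even that step is routine; the rest is bookkeeping with standard elliptic-integral identities and the known Lam\'e spectrum.
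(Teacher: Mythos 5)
Your proposal is correct and takes essentially the same route as the paper: verify \textbf{(H0)}--\textbf{(H2)} through the Lam\'e-type eigenvalue problem with $n=2$, obtain \textbf{(H3)} from $\Phi=-\tfrac12\,\frac{\partial c}{\partial k}\frac{d}{dk}\int_0^L\phi_k^2\,dx$ together with $\int_0^L\phi_k^2\,dx=8K(k)E(k)/L$ and the monotonicity of $K(k)E(k)$ and of $c(k)$, note that \textbf{(H4)} is immediate since $A\equiv0$, and then apply Theorem \ref{gkdvmaintheorem}. The only cosmetic remarks are that your momentary sign slip (the correct relation is $\mathcal{L}_k(\partial_k\phi_k)=-\frac{\partial c}{\partial k}\phi_k$) is self-corrected to the paper's formula, and that $\frac{\partial c}{\partial k}>0$ does not follow from monotonicity of $K$ alone (since $2-k^2$ decreases) but needs the elementary inequality $(2-k^2)E(k)>2(1-k^2)K(k)$, a point the paper likewise states without detail.
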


To the best of our knowledge, Theorem \ref{mkdvstabteo} has first appeared in \cite{angulo3} where the author exploited his results established for the cubic Schr\"odinger equation. The approach to obtain the results was based on the classical ideas contained \cite{be}, \cite{bona}, and \cite{weinstein1}. From the point of view of Hamiltonian systems, the same result was established in \cite{kade}.

\subsubsection{Dnoidal-Snoidal type solutions}\label{dssub}
We now assume that zero is a root of the polynomial $P$ in \eqref{mkdv1}. This immediately implies that the integration constant $B$ must be zero. By assuming that $\alpha_1<0<\alpha_3<\alpha_4$ are the roots of $P$, applying the quadrature method  and using  formula 257.00 in \cite{friedman}, we obtain the following $L$-periodic smooth curve of solutions for \eqref{mkdvedo},
$$
\phi_{k}(\xi)=\frac{\alpha_4(k)\left(\alpha_3(k)-\alpha_1(k)\right)\dn^2\left(\frac{2K(k)}{L}\xi,k\right)}{\left(\alpha_3(k)-\alpha_1(k)\right)+\left(\alpha_4(k)-\alpha_3(k)\right)\sn^2\left(\frac{2K(k)}{L}\xi,k\right)}.
$$
where the constants $\alpha_i(k)$ are given by (after some algebra)
$$\begin{array}{l}
\ds\alpha_1(k)=\frac{-2K(k)}{L}\left(\sqrt{2\sqrt{k^4-k^2+1}+1-2k^2}+\frac{1}{\sqrt{3}}\sqrt{2\sqrt{k^4-k^2+1}-1+2k^2}\right),\\
\\
\ds \alpha_3(k)= \frac{2K(k)}{L}\left(\sqrt{2\sqrt{k^4-k^2+1}+1-2k^2}-\frac{1}{\sqrt{3}}\sqrt{2\sqrt{k^4-k^2+1}-1+2k^2}\right),\\
\\
\ds \alpha_4(k)=\frac{4K(k)}{\sqrt{3}L}\sqrt{2\sqrt{k^4-k^2+1}-1+2k^2}.
\end{array}
$$
In addition $c$ and $A$ can also be expressed in terms of $k$ as 
$$
\ds c(k)=\frac{16K^2(k)}{L^2}\sqrt{k^4-k^2+1}
$$
and
$$
 A(k)=\frac{-32K^3(k)}{3\sqrt{3}L^3}\left(\sqrt{k^4-k^2+1}-2k^2+1\right)\sqrt{2\sqrt{k^4-k^2+1}+2k^2-1}.
 $$

After a few algebraic manipulations involving elliptic functions $\phi_k$ can be written as
\begin{equation}
\phi_k(\xi)=\frac{4K(k)}{\sqrt{2}g(k)L}\left(\frac{\dn^2\left(\frac{2K(k)}{L}\xi,k\right)}{1+\beta^2\sn^2\left(\frac{2K(k)}{L}\xi,k\right)}\right),
\label{mkdv29}
\end{equation}
where $\beta^2=\sqrt{k^4-k^2+1}+k^2-1$ and $g(k)=\sqrt{\sqrt{k^4-k^2+1}-k^2+\frac{1}{2}}$. Consequently, we obtain {\bf (H0)} with $J=(0,1)$.

\begin{remark} Formally, by making $k\to1$ in \eqref{mkdv29}, the periodic function $\phi_k$ looses its periodicity and degenerates to
$$
\phi_0(\xi)=\frac{-\alpha_1\alpha_4}{-\alpha_4+\frac{(\alpha_4-\alpha_1)}{2}+\frac{(\alpha_4-\alpha_1)}{2}\mbox{cosh}\left(2 \xi\right)},
$$
which is a solitary wave for the mKdV equation.
Orbital stability in the energy space of this solution was studied in \cite{alejo1}.
\end{remark}

In order to check {\bf (H1)-(H2)} we will take the advantage of the results in \cite{Neves1}, \cite{Neves2}, and \cite{natali2} as briefly described at the beginning of this section. Let $(n,z)$ be the inertial index of the linearized operator $\Lk=-\partial_x^2+c(k)-6\phi_k^2$, that is, $n$ denotes the dimension
of the negative subspace of $\Lk$ and $z$ denotes the dimension of $\ker(\Lk)$. Under our constructions, $\Lk$ is \textit{isonertial}, that is, $(n,z)$ does not depend on $k$ and on $L$ (see Theorem 3.1 in \cite{Neves2} or Theorem 3.1 \cite{natali2}). Thus, it suffices to fix $k\in(0,1)$ and $L>0$. For the sake of simplicity, we fix $k_0:=0.5$ and $L_0=30$. Since $\phi_{k_0}'$ has two zeros in the interval $[0,L_0)$ and 
$$
\mathcal{L}_k(\phi_k')= - \phi_k'''+c\phi'_k -6\phi_k^2\phi_k',=\left(- \phi_k''+c(k)\phi_k-2\phi_k^3 \right)'= 0.
$$
we obtain that zero is the second or the third eigenvalue of $\mathcal{L}_{k_0}$. Theorem 3.2 in \cite{natali2} establishes that zero is the second eigenvalue (which in turn is simple) provided that
\begin{equation}
\theta:=\frac{y'(L_0)}{\phi_{k_0}''(0)}<0,
\label{formulathetamkdv}
\end{equation}
where $y$ is the unique solution of the IVP
\begin{equation}
\left\{\begin{array}{l}
\ds-y''+\left[c(k_0)-6\phi_{k_0}^{2}\right]y=0,\\
y(0)=-\frac{1}{\phi_{k_0}''(0)},\\
y'(0)=0.
\end{array}\right.
\label{pvialoisiomkdv}
\end{equation}
The sign of $\theta$ can be obtained once we have the value $y'(L_0)$. We can solve (numerically) \eqref{pvialoisiomkdv} and, in particular, we deduce that 
$$\theta\cong-1.382078401\times 10^5.$$
Next table illustrates some values of $\theta$ if we fix $k_0$ and choose different values of $L$.

\vskip.3cm
\begin{center}
\begin{tabular}{|c|c|c|c|c|}
\hline 
\multicolumn{5}{|c|}{Values of $\theta$ with  $k_0=0.5$. }\\ 
\hline 
$L=20$ & $L=50$ & $L=200$ & $L=1000$ & $L=1000000$ \\ 
\hline 
$\theta\cong -18200$ & $\theta\cong -1.77 \times 10^6$ & $\theta\cong -1.82 \times 10^9$ & $\theta\cong -5.68 \times 10^{12}$ & $\theta\cong -5.68 \times 10^{27}$ \\ 
\hline 
\end{tabular} \\
\end{center}
\vskip.3cm
This checks {\bf (H1)-(H2)}.

We now check {\bf (H3)}. By deriving  equation (\ref{mkdvedo}), with respect to $k$, we get
$$\mathcal{L}_k \left(\frac{\partial \phi_k}{\partial k}\right)=-\frac{\partial c}{\partial k}\phi_k-\frac{\partial A}{\partial k}.$$
Thus, we can write $\Phi$ as
$$\begin{array}{rl}
\Phi=\ds&-\ds\left\langle \frac{\partial c}{\partial k}\phi_k+\frac{\partial A}{\partial k} , \frac{\partial \phi_k}{\partial k}\right\rangle=-\left\langle M'_k(\phi_k),\frac{\partial \phi_{k}}{\partial k}\right\rangle\\
\\
=&-\ds\int_0^L \frac{\partial c}{\partial k}\phi_k\frac{\partial \phi_k}{\partial k}+ \frac{\partial A}{\partial k}\frac{\partial \phi_k}{\partial k} dx\\
\\
=&-\ds\ds\int_0^L\frac{1}{2} \frac{\partial c}{\partial k} \frac{\partial }{\partial k}\phi_k^2 + \frac{\partial A}{\partial k}\frac{\partial \phi_k}{\partial k} dx.\\
\end{array}
$$
In other words,
$$\begin{array}{rl}
\Phi=\ds&-\ds  \frac{\partial c}{\partial k} \frac{\partial }{\partial k}\left(\frac{1}{2}\int_0^L \phi_k^2dx\right) - \frac{\partial A}{\partial k}\frac{\partial}{\partial k}\int_0^L\phi_k dx\\
\\
=&\ds- \frac{\partial c}{\partial k} \frac{\partial }{\partial k}Q(\phi_k) - \frac{\partial A}{\partial k}\frac{\partial}{\partial k}V(\phi_k).
\end{array}
$$

Since $\phi_k$ is given in \eqref{mkdv29}, we can compute  $Q(\phi_k)$ and $V(\phi_k)$ to obtain $\Phi$. Indeed,
$$
\begin{array}{ll}
V(\phi_k)=\ds \int_0^L\phi_k(x)dx & =\ds \int_0^L \frac{4K(k)}{\sqrt{2}g(k)L}\left(\frac{\dn^2\left(\frac{2K(k)}{L}x,k\right)}{1+\beta^2\sn^2\left(\frac{2K(k)}{L}x,k\right)}\right) dx\\
\\
&=\ds \frac{4K(k)}{\sqrt{2}g(k)L} \int_0^L  \frac{\dn^2\left(\frac{2K(k)}{L}x,k\right)}{1+\beta^2\sn^2\left(\frac{2K(k)}{L}x,k\right)} dx.\\
\end{array}
$$
By making the change of variable $\ds y=\frac{2K(k)}{L}x$, we obtain
$$V(\phi_k)=\frac{4K(k)}{\sqrt{2}g(k)L}\frac{L}{2K(k)}\int_0^{2K(k)}\frac{\dn^2(y,k)}{1+\beta^2\sn^2(y,k)}dy,
$$
that is,
$$V(\phi_k)=\frac{4}{\sqrt{2}g(k)}\int_0^{K(k)}\frac{\dn^2(y,k)}{1+\beta^2\sn^2(y,k)}dy=:\frac{4}{\sqrt{2}g(k)} I_1.
$$
With similar arguments, we see that
$$Q(\phi_k)=\frac{4K(k)}{g^2(k)L}\int_0^{K(k)}\frac{\dn^4(y,k)}{(1+\beta^2\sn^2(y,k))^2}dy=:\frac{4K(k)}{g^2(k)L} I_2.
$$

Now, let $\alpha^2$ be such that $-\alpha^2=\beta^2$ it follows that $0<-\alpha^2<k^2$ and  formula 410.04 in \cite{friedman} implies that
$$
I_1=\frac{(k^2-\alpha^2)G(w,k)}{\sqrt{\alpha^2(1-\alpha^2)(\alpha^2-k^2)}},
$$
where, for $k'=\sqrt{1-k^2}$,
$$G(w,k)=K(k)E(w,k')-K(k)F(w,k')+E(k)F(w,k')=\frac{\pi}{2}\Lambda_0(w,k)$$
and
$$w=\sin^{-1}\sqrt{\frac{\alpha^2}{\alpha^2-k^2}}=\sin^{-1}\frac{\beta}{\sqrt{k^2+\beta^2}}.$$ The functions $F$ and $E$ are the incomplete elliptic integral of the first and second kind and the function $\Lambda_0(w,k)$ is known as  \textit{Heuman's Lambda function} (see e.g., \cite{friedman} for additional details).  By using the expression $\beta^2=\sqrt{k^4-k^2+1}+k^2-1$, we can rewrite
$$
I_1=\frac{\sqrt{\sqrt{k^4-k^2+1}+2k^2-1}G(w,k)}{\sqrt{2k^4-2k^2+1+(2k^2-1)\sqrt{k^4-k^2+1}}},
$$
and
$$
w=\sin^{-1}\sqrt{\frac{\sqrt{k^4-k^2+1}+k^2-1}{\sqrt{k^4-k^2+1}+2k^2-1}}.
$$

On the other hand, the relation $\dn^2=1-k^2\sn^2$ yields
$$I_2=\int_0^{K(k)}\frac{(1-k^2\sn^2(y))^2}{(1+\beta^2\sn^2(y))^2}dy.$$
Thus, by  formula  410.8 in \cite{friedman}, we have
$$I_2=\frac{1}{\alpha^4}\Big(k^4K(k)+2k^2(\alpha^2-k^2)\Pi(\alpha^2,k)+(\alpha^2-k^2)^2V_2\Big),$$
where
$$\Pi(\alpha^2,k)=\frac{k^2K(k)}{k^2-\alpha^2}-\frac{\alpha^2G(w,k)}{\sqrt{\alpha^2(1-\alpha^2)(\alpha^2-k^2)}}$$
and
$$V_2=\eta_0\left(\alpha^2E(k)+ \frac{2k^4\alpha^2-2k^4+\alpha^4(k')^2}{k^2-\alpha^2}K(k)-\frac{\alpha^2(2\alpha^2k^2+2\alpha^2-\alpha^4-3k^2)G(w,k)}{\sqrt{\alpha^2(1-\alpha^2)(\alpha^2-k^2)}}\right),$$
with $\ds\eta_0=1/(2(\alpha^2-1)(k^2-\alpha^2))$ and $w$ as before.

In view of the above relations,
$$\begin{array}{rl}
\Phi=\ds&-\ds \frac{\partial c}{\partial k} \frac{\partial }{\partial k}\left( \frac{4K(k)}{g^2(k)L} I_2  \right) - \frac{\partial A}{\partial k}\frac{\partial}{\partial k}\left( \frac{4}{\sqrt{2}g(k)} I_1  \right) .
\end{array}
$$
Besides, 
$$\frac{\partial c}{\partial k}=\frac{1}{L^2}\frac{\partial }{\partial k}\left(16K^2(k)\sqrt{k^4-k^2+1}\right)=:\frac{1}{L^2}m_1(k)$$
and
$$\frac{\partial A}{\partial k}=\frac{1}{L^3}\frac{\partial }{\partial k}\left(\frac{-32K^3(k)\left(\sqrt{k^4-k^2+1}-2k^2+1\right)}{3\sqrt{3}}\sqrt{2\sqrt{k^4-k^2+1}+2k^2-1}\right)=: \frac{1}{L^3}m_2(k).$$
Thus, 
$$\begin{array}{rl}
\Phi=\ds&-\ds \frac{1}{L^2}m_1(k)\frac{\partial }{\partial k}\left( \frac{4K(k)}{g^2(k)L} I_2  \right) - \frac{1}{L^3}m_2(k)\frac{\partial}{\partial k}\left( \frac{4}{\sqrt{2}g(k)} I_1  \right)
\end{array}$$
and finally,
$$\Phi=-\frac{1}{L^3}m_3(k),$$
where
$$m_3(k)=m_1(k)\frac{\partial }{\partial k}\left( \frac{4K(k)}{g^2(k)} I_2  \right) + m_2(k)\frac{\partial}{\partial k}\left( \frac{4}{\sqrt{2}g(k)} I_1\right).$$
Since $m_3(k)$ depends only on $k$, we can check, at least numerically, that $m_3(k)>0$, for all $k\in(0,1)$ (see Figure \ref{mkmkdv1111} below). Therefore, we  conclude that $\Phi<0$.

\begin{figure}[htb]
	\centering
	\subfigure{\includegraphics[scale=0.25]{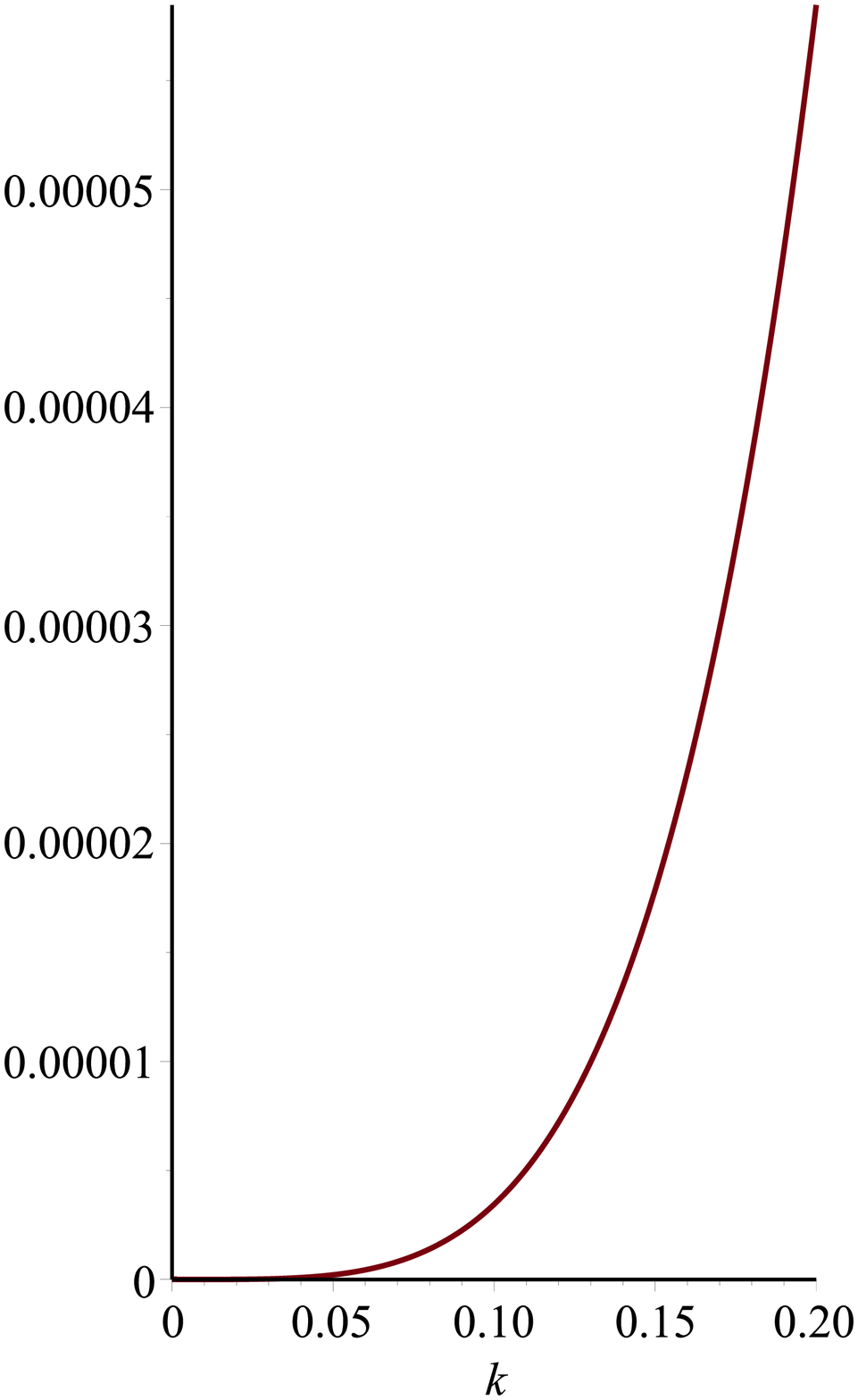}}
	\subfigure{\includegraphics[scale=0.25]{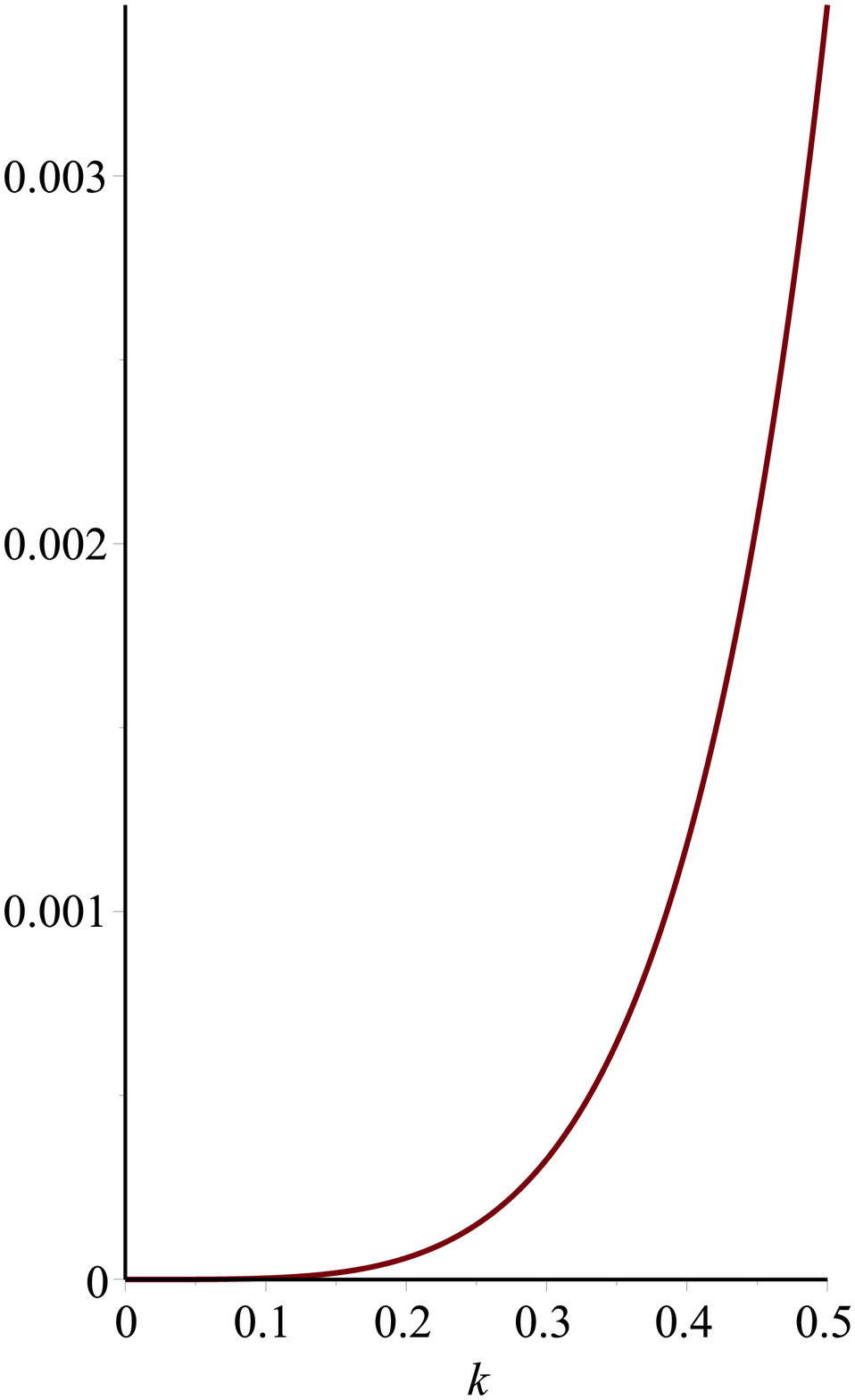}}
	\subfigure{\includegraphics[scale=0.25]{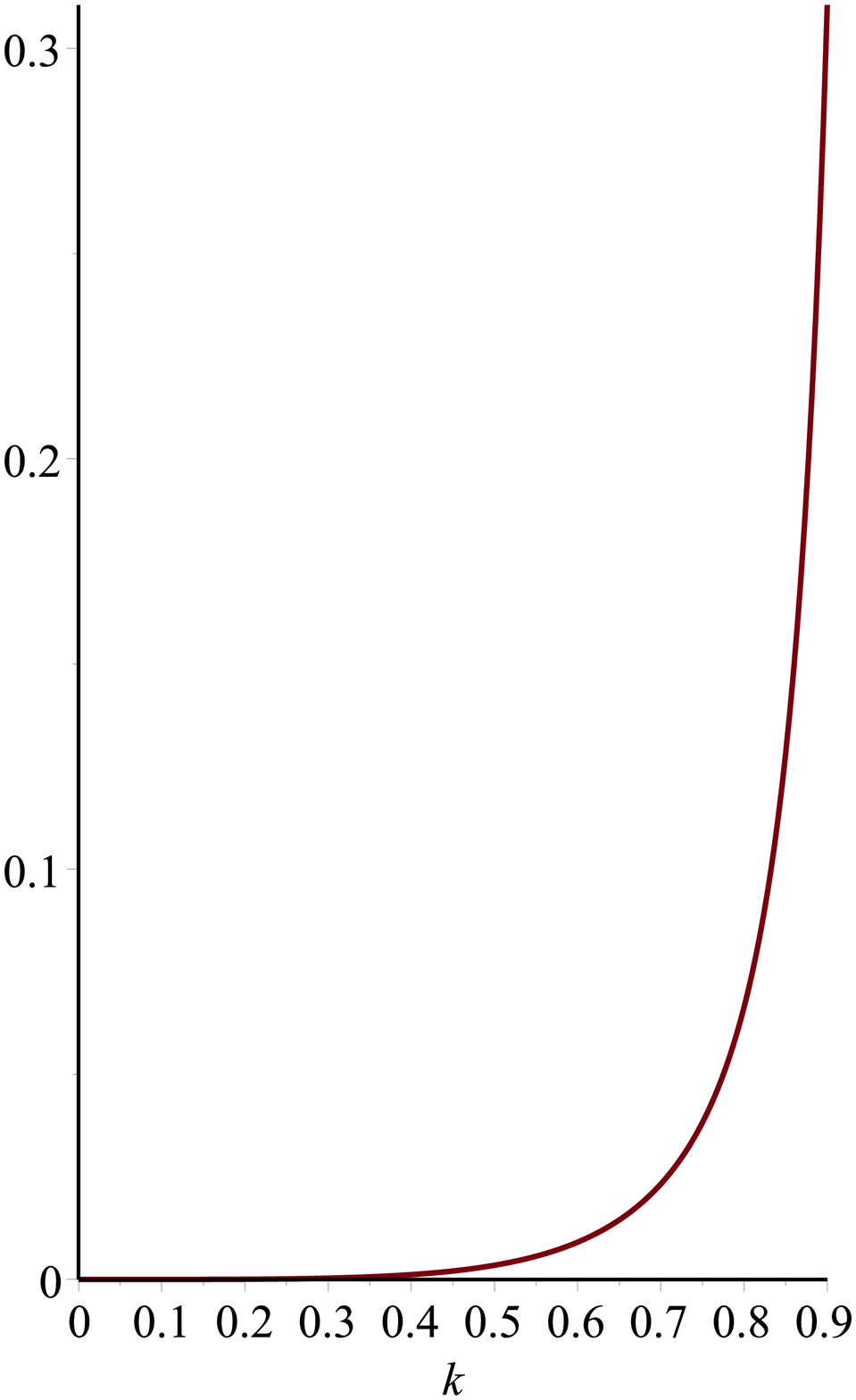}}
	\caption{Graph of $\Phi$ as function of $k$, with $k\in (0,0.2)$, $k\in(0,0.5)$ and $k\in(0,1)$, respectively.}
	\label{mkmkdv1111}
\end{figure}

Next, by following similar ideas as above we can verify that $M_k(\phi_k)>0$ for all $k\in (0,1)$. Since it demands tedious calculation involving elliptic functions, we refrain from write them here and instead only plot the graph of $k\mapsto M_k(\phi_k)$ (see Figure \ref{mkdvMk}).  In addition since $k\mapsto c(k)$ is an increasing function, we see that {\bf (H4)} holds.
\begin{figure}[htb]
	\centering
	\subfigure{\includegraphics[scale=0.25]{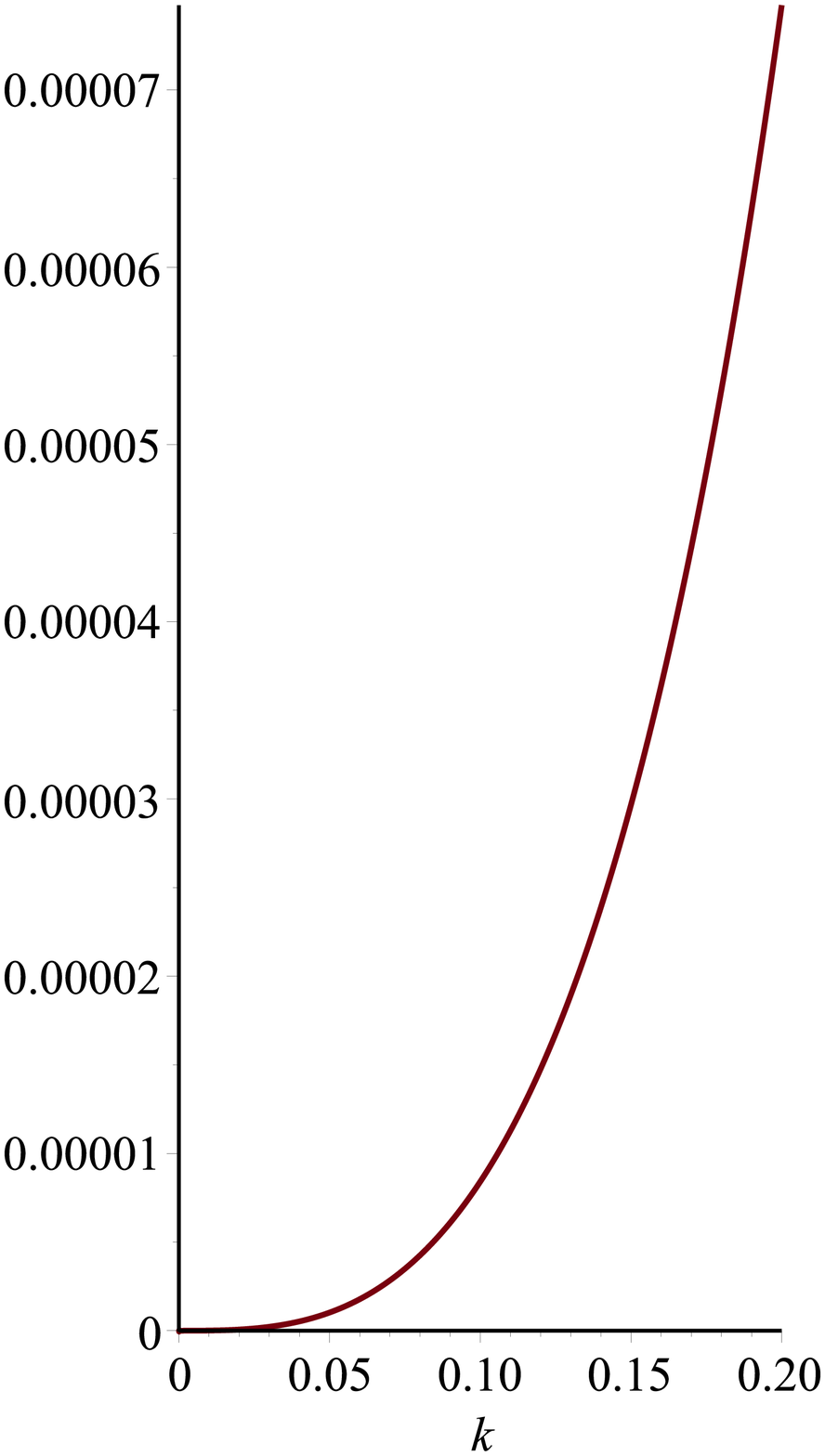}}
	\subfigure{\includegraphics[scale=0.25]{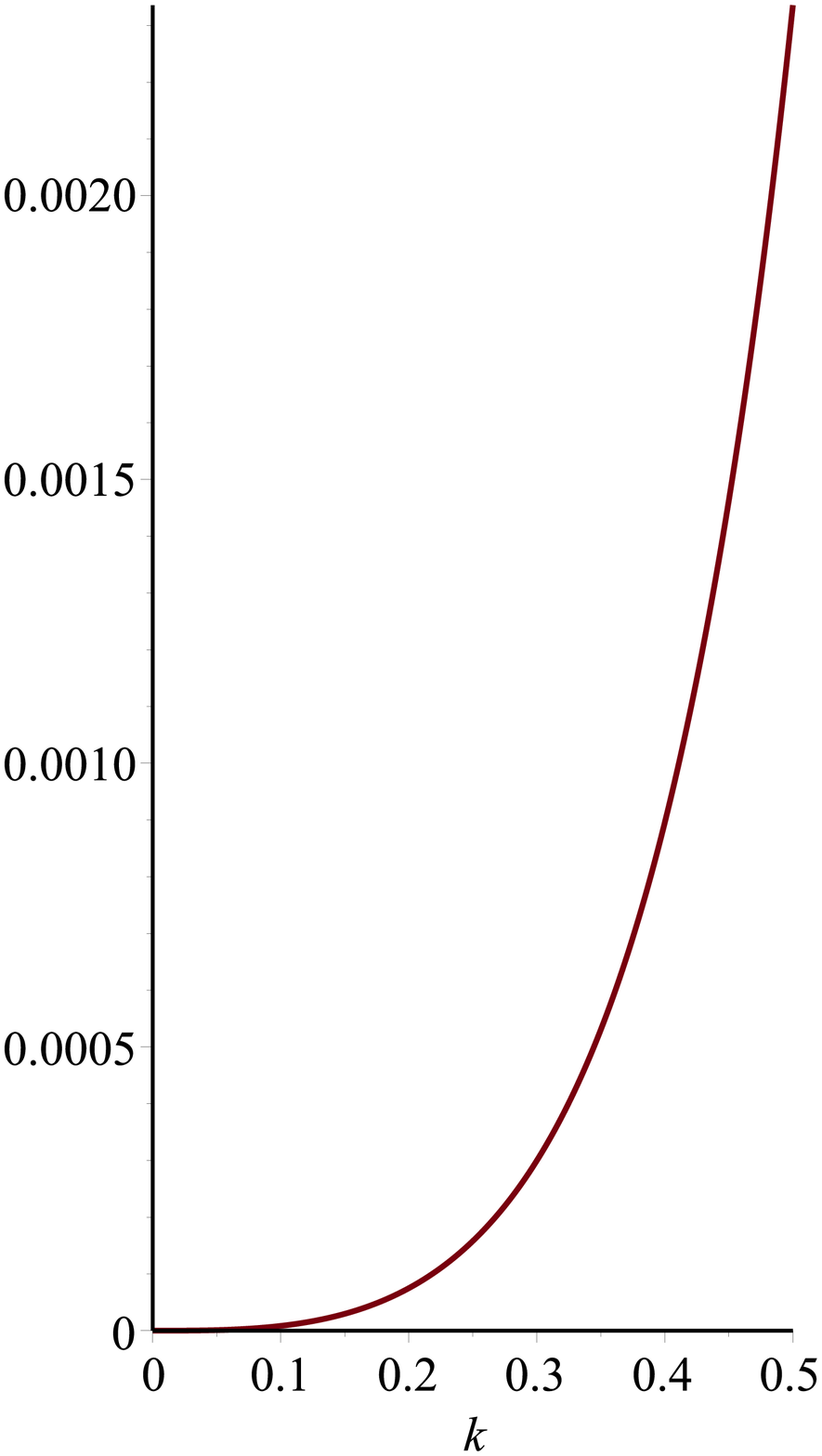}}
	\subfigure{\includegraphics[scale=0.25]{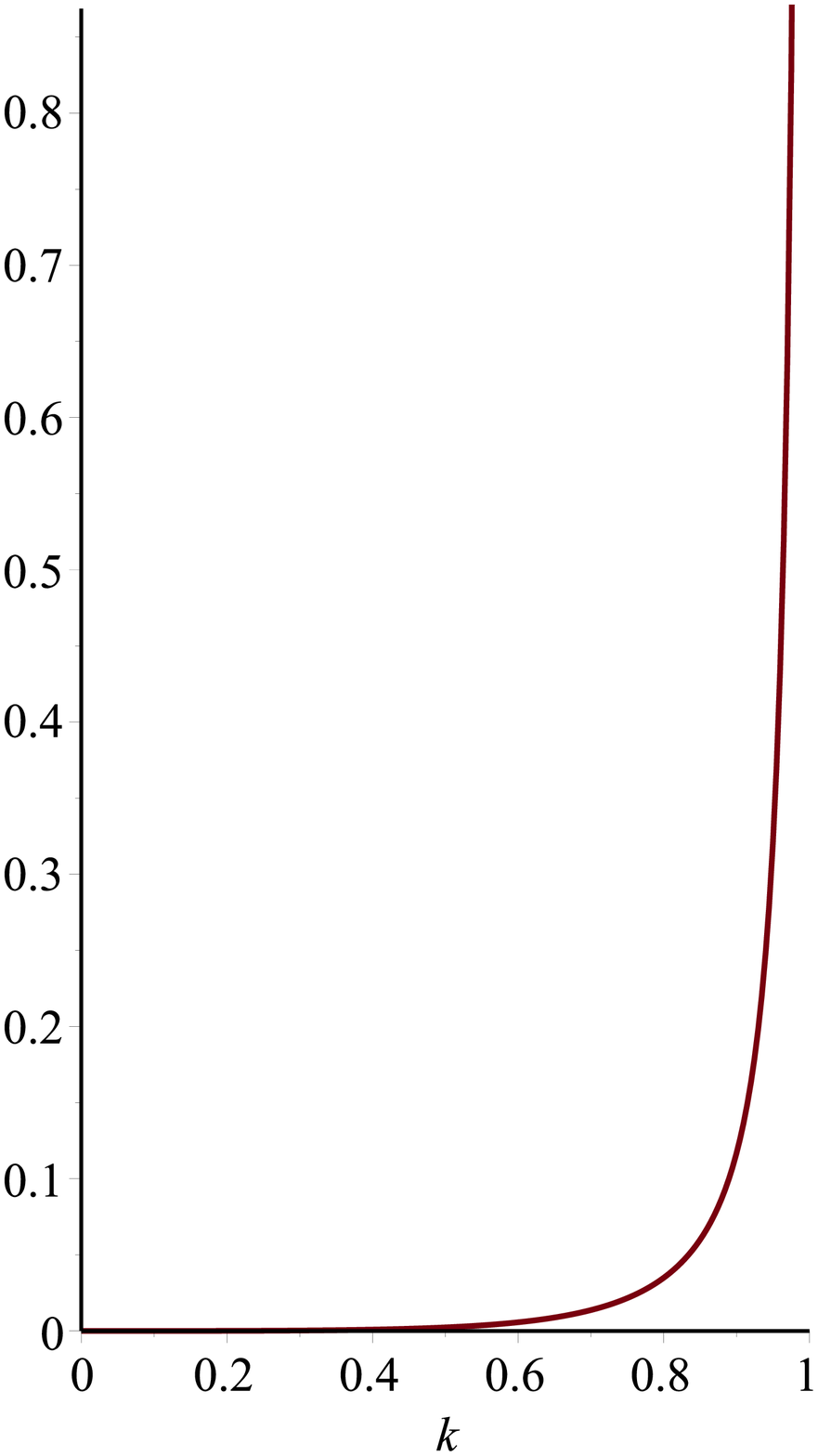}}
	\caption{Graph of $M_k(\phi_k)$ as function of $k$, with $k\in (0,0.2)$, $k\in(0,0.5)$ and $k\in(0,1)$, respectively.}
	\label{mkdvMk}
\end{figure}

 Combining these informations with Theorem \ref{gkdvmaintheorem}, we obtain

\begin{theorem}\label{mkdvstabteo2}
For each $k\in J=(0,1)$, the periodic traveling wave $\phi_k$ given in \eqref{mkdv29}
 is orbitally stable  in $H^{1}_{per}([0,L])$.
\end{theorem}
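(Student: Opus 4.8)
The plan is to verify hypotheses \textbf{(H0)}--\textbf{(H4)} for the dnoidal-snoidal family \eqref{mkdv29} and then invoke Theorem \ref{gkdvmaintheorem} directly. The existence assumption \textbf{(H0)} is already contained in the construction above: the quadrature method applied to \eqref{mkdv1} with $B=0$ and a simple zero at the origin produces the smooth curve $k\in(0,1)\mapsto\phi_k\in H^2_{per}([0,L])$ with $c(k)>0$, so I would only note that $\mathcal{M}=-\partial_x^2$ satisfies our symbol hypotheses with $s_1=s_2=2$ and $\gamma=0$, whence $c(k)>-\gamma=0$ and the energy space is $H^1_{per}([0,L])$.

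Next I would settle the spectral conditions \textbf{(H1)}--\textbf{(H2)} via Neves' approach (item (ii) of Section \ref{applic}) combined with the isoinertial principle of \cite{Neves2,natali2}. The point is that the inertial index $(n,z)$ of $\mathcal{L}_k=-\partial_x^2+c(k)-6\phi_k^2$ is independent of $k\in(0,1)$ and of $L>0$, so it suffices to compute it at one convenient pair, say $k_0=0.5$, $L_0=30$. Since $\mathcal{L}_k(\phi_k')=0$ and $\phi_{k_0}'$ has exactly two zeros in $[0,L_0)$, Sturm-Liouville theory forces $0$ to be the second or third eigenvalue; Theorem 3.2 of \cite{natali2} then identifies it as the second (and simple) precisely when the Floquet quantity $\theta$ in \eqref{formulathetamkdv} is negative, where $\theta=y'(L_0)/\phi_{k_0}''(0)$ and $y$ solves the IVP \eqref{pvialoisiomkdv}. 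Numerically integrating \eqref{pvialoisiomkdv} gives $\theta\cong-1.38\times10^5<0$, and the accompanying table of values for several $L$ confirms robustness; hence $\lambda_0<\lambda_1=0<\lambda_2$, which is \textbf{(H1)}--\textbf{(H2)}.

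For \textbf{(H3)} I would differentiate the profile equation \eqref{mkdvedo} in $k$ to get $\mathcal{L}_k(\partial_k\phi_k)=-\tfrac{\partial c}{\partial k}\phi_k-\tfrac{\partial A}{\partial k}$, so that
\[
\Phi=\left\langle\mathcal{L}_k(\partial_k\phi_k),\partial_k\phi_k\right\rangle
=-\frac{\partial c}{\partial k}\frac{\partial}{\partial k}Q(\phi_k)-\frac{\partial A}{\partial k}\frac{\partial}{\partial k}V(\phi_k).
\]
Using \eqref{mkdv29} and the substitution $y=\tfrac{2K(k)}{L}x$ reduces $V(\phi_k)$ and $Q(\phi_k)$ to the one-variable integrals $I_1=\int_0^{K(k)}\frac{\dn^2}{1+\beta^2\sn^2}\,dy$ and $I_2=\int_0^{K(k)}\frac{\dn^4}{(1+\beta^2\sn^2)^2}\,dy$, which are evaluated by formulas 410.04 and 410.8 in \cite{friedman} in terms of complete elliptic integrals and Heuman's Lambda function. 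Collecting the $L$-powers gives $\Phi=-L^{-3}m_3(k)$ with $m_3$ depending only on $k$; one checks $m_3(k)>0$ on $(0,1)$, either numerically (Figure \ref{mkmkdv1111}) or, with more labor, via Taylor expansions of the elliptic functions as in \cite{depas}. Finally, for \textbf{(H4)} the same elliptic-function bookkeeping shows $M_k(\phi_k)>0$ on $(0,1)$ (Figure \ref{mkdvMk}), and since $k\mapsto c(k)$ is strictly increasing we have $\tfrac{\partial c}{\partial k}>0$, so $-\tfrac{\partial c}{\partial k}Q(\phi_k)<0<M_k(\phi_k)$, giving the strict inequality. With \textbf{(H0)}--\textbf{(H4)} in hand, Theorem \ref{gkdvmaintheorem} yields orbital stability of $\phi_k$ in $H^1_{per}([0,L])$ for every $k\in(0,1)$.

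The main obstacle I anticipate is the positivity of $m_3(k)$ (equivalently $\Phi<0$): unlike the dnoidal case of Section \ref{subsec3.2}, where $\Phi$ reduced to the monotonicity of $k\mapsto K(k)E(k)$, here the presence of $A(k)\ne0$ and the rational-in-$\sn^2$ structure of $\phi_k$ make both $I_1$ and $I_2$ genuinely three-term combinations of $K,E$ and $G(w,k)$, so their $k$-derivatives are unwieldy and a clean closed-form sign is not available. I would therefore present the numerical verification (Figures \ref{mkmkdv1111} and \ref{mkdvMk}) as the operative argument, remarking that an analytic proof is possible in principle through careful asymptotic expansions near $k=0$ and $k=1$ together with monotonicity estimates, but is omitted for brevity. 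The spectral step, by contrast, is reduced to a single finite computation thanks to the isoinertial principle and so presents no real difficulty.
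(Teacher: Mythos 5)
Your proposal is correct and follows essentially the same route as the paper: hypotheses \textbf{(H0)}--\textbf{(H2)} via the explicit elliptic construction together with Neves' isoinertial/Floquet criterion at a fixed $(k_0,L_0)$, and \textbf{(H3)}--\textbf{(H4)} by reducing $Q(\phi_k)$, $V(\phi_k)$ to the integrals $I_1,I_2$ through formulas 410.04 and 410.8 of \cite{friedman} and verifying the signs of $m_3(k)$ and $M_k(\phi_k)$ numerically, exactly as in Subsection \ref{dssub}. Even your acknowledgment that the sign of $\Phi$ rests on numerical (rather than closed-form) verification mirrors the paper's treatment, so there is nothing substantive to add.
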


\subsection{The Gardner equation}

Our purpose in this section is to prove  orbital stability of periodic waves for the Gardner equation 
\begin{equation}
v_t+v_{xxx}+avv_x+bv^2v_x=0,
\label{gd1}
\end{equation}
where $a$ and $b$ are real parameters and $b\neq0$. From the mathematical point of view, Gardner equations is seem as a mixed equation because it contains both KdV and mKdV nonlinearities. On the other hand, Gardner and mKdV equations appear as  models for   the flow of waves in plasma and solid environments. It also appears as a model in quantum fields (see e.g., \cite{konno}, \cite{wadati1} and \cite{wadati2}).

Our strategy  consists in transferring the problem of  orbital stability of periodic waves for \eqref{gd1}  to that for the mKdV equation. Indeed, Gardner equation is close related with the Focusing or Defocusing mKdV equation (F-mKdV or D-mKdV for simplicity)
\begin{equation}
{u}_t+{u}_{xxx}+ \gamma 6{u}^2{u}_x=0,
\label{gd2}
\end{equation}
where $\gamma=\mbox{sgn}(b)$. More precisely, there is  a dipheomorfism $\mathcal{T}$ (between suitable spaces)  that relates solutions of \eqref{gd1} to  solutions of  \eqref{gd2} given by
\begin{equation}
\ds(\mathcal{T}v)(x,t):=\sqrt{\frac{b}{6\gamma}}\left[ v\left(x-\frac{a^2}{4b}t,t\right)+\frac{a}{2b}\right].
\label{gd4}
\end{equation}

Using these transformations, Alejo \cite{alejo1} studied the orbital stability of soliton-like solutions for \eqref{gd1}. The author dealt with the case  $a=6\sigma$ and $b=6$ and proved the orbital stability of the solitary traveling waves
\begin{equation}
\phi_{(\sigma,c_0)}(x-c_\sigma t)=\frac{c_0}{2\sigma+\sqrt{4\sigma^2+c_0}\mbox{cosh}(\sqrt{c_0}(x-c_\sigma t))},
\label{solitonalejo}
\end{equation}
where $c_\sigma=6\sigma^2+c_0$. Here, the constant $c_0$ satisfies $c_0\in(0,\infty)$ if $b>0$, and $c_0\in(0,4\sigma^2)$ if $b<0$. For the stability of $N$-solitons we refer the reader to \cite{alejo2}.

Before proceeding, let us highlight a crucial difference between periodic and solitary wave solutions of \eqref{gd1}. Assume that $v$ is a solution of \eqref{gd1} with $a\neq0$. Then, $\mathcal{T}v$ is a solution of F-mKdV or D-mKdV having the form $\alpha+\beta v$, where $\alpha$ and $\beta$ are real constants with $\alpha\neq0$. In particular, if $v\in C(\R,H^1(\R))$ then $\mathcal{T}v$ does not belong to $C(\R,H^1(\R))$. On the other hand, if  $v\in C(\R,H^1_{per}([0,L]))$ then $\mathcal{T}v$ also belongs to  $C(\R,H^1_{per}([0,L]))$. Therefore, periodic traveling waves solutions of \eqref{gd1} relate in a better way with the periodic traveling waves solution of \eqref{gd2} in the sense that once obtained spatially periodic solutions of \eqref{gd1} we also obtain spatially periodic solutions of \eqref{gd2} (and vice-versa). This is the content of the next lemma.

\begin{lemma} \label{lemapropP}
 Let $\mathcal{T}$ be defined as in  \eqref{gd4}. Then, $\mathcal{T}:C(\R,H^1_{per}([0,L]))\to C(\R,H^1_{per}([0,L]))$ is a diffeomorphism,  whose inverse is given by
 \begin{equation}\label{gd5}
(\mathcal{T}^{-1}u)(x,t):= \sqrt{\frac{6\gamma }{b}}u\left(x+\frac{a^2}{4b}t,t\right)-\frac{a}{2b}.
\end{equation}
In addition, there is a constant $C>0$ such that, for any $u, v \in C(\R,H^1_{per}([0,L]))$,
$$
\ds\rho(u,v)=C\rho(\mathcal{T}u,\mathcal{T}v),
$$
where $\rho$ is defined as in \eqref{rhodef} with $s_2=2$.
\end{lemma}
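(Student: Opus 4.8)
\emph{Proof plan.} Since $\gamma=\mathrm{sgn}(b)$, one has $b/(6\gamma)=|b|/6>0$, so that $\mu:=\sqrt{b/6\gamma}$ is a genuine positive real number. The starting observation is that $\mathcal{T}$ is an \emph{affine} map of the Banach space $X:=C(\R,H^1_{per}([0,L]))$: writing $\tau(t):=\tfrac{a^2}{4b}t$, we have $\mathcal{T}v=Sv+\kappa$, where $(Sv)(x,t):=\mu\,v(x-\tau(t),t)$ is linear and $\kappa\equiv\mu\cdot\tfrac{a}{2b}$ is the constant function. The plan is to exploit this structure in four short steps: (i) $\mathcal{T}$ maps $X$ into itself; (ii) the formula \eqref{gd5} defines a two-sided inverse; (iii) hence $\mathcal{T}$ is a diffeomorphism; (iv) the metric identity.

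For (i), fix $t$. The spatial shift $w\mapsto w(\cdot-\tau(t))$ is an isometry of $H^1_{per}([0,L])$, and the constant $\tfrac{a}{2b}$ lies in $H^1_{per}([0,L])$ — this is exactly the point, emphasized just before the lemma, where periodicity is essential, since adding a constant destroys membership in $H^1(\R)$. Hence $(\mathcal{T}v)(\cdot,t)\in H^1_{per}([0,L])$ for every $t$. For continuity in $t$ near a point $t_0$, I would split $E_{\tau(t)}v(\cdot,t)-E_{\tau(t_0)}v(\cdot,t_0)$ (with $E_\sigma$ denoting translation by $\sigma$) as $E_{\tau(t)}\big(v(\cdot,t)-v(\cdot,t_0)\big)+\big(E_{\tau(t)}-E_{\tau(t_0)}\big)v(\cdot,t_0)$: the first summand goes to $0$ because $v\in X$ and each $E_\sigma$ is a contraction on $H^1_{per}([0,L])$, the second because the translation group is strongly continuous there.

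For (ii) and (iii), a direct substitution in \eqref{gd4} and \eqref{gd5} gives $\mathcal{T}^{-1}\mathcal{T}=\mathcal{T}\mathcal{T}^{-1}=\mathrm{id}$; in particular $S$ is bounded and boundedly invertible, with $(S^{-1}u)(x,t)=\mu^{-1}u(x+\tau(t),t)$, and $\mathcal{T}^{-1}$ maps $X$ into itself exactly as in step (i). Thus $\mathcal{T}$ is an affine bijection of the Banach space $X$ with affine (hence continuous) inverse; as any affine map between Banach spaces is $C^\infty$ with constant derivative $S$, it follows that $\mathcal{T}$ is a diffeomorphism.

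For (iv), fix $t$ and regard $u(\cdot,t),v(\cdot,t)$ as elements of $H^1_{per}([0,L])$. For every $r\in\R$ the additive constant $\mu\tfrac{a}{2b}$ is invariant under $\cdot\mapsto\cdot+r$ and therefore cancels in
\[
(\mathcal{T}u)(\cdot,t)-(\mathcal{T}v)(\cdot+r,t)=\mu\big[u(\cdot-\tau(t),t)-v(\cdot+r-\tau(t),t)\big];
\]
pulling $\mu$ out of the norm and using once more that $\cdot\mapsto\cdot-\tau(t)$ is an $H^1_{per}$-isometry gives $\|(\mathcal{T}u)(\cdot,t)-(\mathcal{T}v)(\cdot+r,t)\|_{H^1_{per}}=\mu\,\|u(\cdot,t)-v(\cdot+r,t)\|_{H^1_{per}}$, so taking the infimum over $r$ yields $\rho(\mathcal{T}u,\mathcal{T}v)=\mu\,\rho(u,v)$, i.e. the asserted identity with $C=\mu^{-1}=\sqrt{6/|b|}$ (independent of $t$). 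The argument is essentially soft, so I do not expect a serious obstacle: the only genuinely technical point is the continuity in $t$ of step (i), which rests on the strong continuity of the translation group on $H^1_{per}([0,L])$; everything else reduces to the elementary facts that translations are $H^1_{per}$-isometries which can be absorbed into the infimum defining $\rho$, and that additive constants are both translation-invariant and — unlike in the whole-line case — members of $H^1_{per}([0,L])$.
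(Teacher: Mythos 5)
Your proposal is correct and follows essentially the same route as the paper: the constants $\tfrac{a}{2b}$ cancel inside the infimum, the scalar factor $\sqrt{b/6\gamma}$ pulls out of the $H^1_{per}$ norm, and the time-dependent shift is harmless because translations are $H^1_{per}$-isometries, while the first part (bijectivity and smoothness of the affine map) is exactly what the paper dismisses as immediate and you merely spell out. Your bookkeeping of the constant ($C=\sqrt{6/|b|}$ in the identity $\rho(u,v)=C\rho(\mathcal{T}u,\mathcal{T}v)$) is in fact slightly more careful than the paper's, which names $C=\sqrt{b/6\gamma}$ after proving the reversed identity.
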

\begin{proof}
The first part is immediate (see also \cite{kudryashov}). Also, if $u, v \in C(\R,H^1_{per}([0,L]))$ then
$$
\begin{array}{ll}
\rho(\mathcal{T}u,\mathcal{T}v)& \ds = \inf_{r\in\R} \left\|\sqrt{\frac{b}{6\gamma}}\left[ u\left(x-\frac{a^2}{4b}t, t\right)+\frac{a}{2b}-v\left(x-\frac{a^2}{4b}t+ r , t\right)-\frac{a}{2b}\right] \right\|_{H^1_{per}}\\
\\
&=\ds \sqrt{\frac{b}{6\gamma}} \inf_{r\in\R} \left\| u\left(x-\frac{a^2}{4b}t,t\right)-v\left(x-\frac{a^2}{4b}t + r ,t\right) \right\|_{H^1_{per}} \\
\\
&=\ds C \rho(u,v),
\end{array}
$$
where $C=\sqrt{\frac{b}{6\gamma }}$. This completes the proof.
\end{proof}

Note that from Lemma \ref{lemapropP}, up to a constant, the diffeomorphism $\mathcal{T}$ is distance-preserving when measured in the pseudo-metric $\rho$.

Although we are able to obtain the orbital stability of periodic traveling waves in cases $b>0$ and $b<0$, in what follows, we restrict ourselves to the case $b>0$ for simplicity. 
We recall that  periodic traveling waves  of \eqref{gd1}, say,  $v(x,t)=\psi_{(\tilde{c},\tilde{A})}(x-\tilde{c}t)$  are obtained as solutions of 
\begin{equation}
\psi'' -\tilde{c}\psi +\frac{a}{2}\psi^2+\frac{b}{3}\psi^3-\tilde{A}=0
\label{gd6}
\end{equation}
and periodic traveling waves of \eqref{gd2}, say,  $u(x,t)=\phi_{(c,A)}(x-ct)$ are obtained as solutions of 
\begin{equation}
\phi''-c\phi+2\phi^3-A=0.
\label{gd7}
\end{equation}

Since $\mathcal{T}$ takes solutions of \eqref{gd1} to solutions of \eqref{gd2}, $\mathcal{T}$ also takes solutions of \eqref{gd6} to solutions of \eqref{gd7} in the following way: if   $\psi_{(\tilde{c},\tilde{A})}$ is a solution of \eqref{gd6}, then 
$$\phi_{(c,A)}(x):=\mathcal{T}\psi_{(\tilde{c},\tilde{A})}(x)=\sqrt{\frac{b}{6\gamma}}\left[\psi_{(\tilde{c},\tilde{A})}(x)+\frac{a}{2b}\right]$$
 is a solution of \eqref{gd7} with
\begin{equation}
\left\{\begin{array}{l}
\ds c=\tilde{c}+\frac{a^2}{4b},\\
\\
\ds A=\sqrt{\frac{b}{6}}\left(-\tilde{c}\frac{a}{2b}-\frac{a^3}{12b^2}+\tilde{A}\right).
\end{array}\right.
\label{gd8}
\end{equation}
Conversely, if $\phi_{(c,A)}$ is a solution of \eqref{gd7}, then 
$$\psi_{(\tilde{c},\tilde{A})}(x):=\mathcal{T}^{-1}\phi_{(c,A)}(x)=\sqrt{\frac{6}{b}}\phi_{(c,A)}(x)-\frac{a}{2b}$$
 is a solution of \eqref{gd6} with
\begin{equation}
\left\{\begin{array}{l}
\ds \tilde{c}=c-\frac{a^2}{4b},\\
\\
\ds \tilde{A}=\sqrt{\frac{6}{b}}A+ 
c\frac{a}{2b}-\frac{a^3}{24b^2}\, .
\end{array}\right.
\label{gd9}
\end{equation}
Note that \eqref{gd8} and \eqref{gd9} bring and explicit relation among the constants $c$, $\tilde{c}$, $A$ and $\tilde{A}$. This is useful because the wave speed is also known explicitly.

Having in mind the orbital stability results presented for the mKdV equation in Subsection \ref{subsec3.2}, we present two new results of orbital stability for the Gardner equation.

\begin{theorem}
Let $\psi_k$  be defined by
\begin{equation}
\ds\psi_k(\xi)=\frac{2\sqrt{6}K(k)}{\sqrt{b}L}\dn\left(\frac{2K(k)}{L}\xi,k\right)-\frac{a}{2b},
\label{gd1a}
\end{equation}
Then, $\psi_k(\xi)$ is a solution of \eqref{gd6} with
$$\left\{\begin{array}{l}\ds \tilde{c}=c_1(k)=\frac{4K^2(k)}{L^2}(2-k^2)-\frac{a^2}{4b},\\
\\
\ds \tilde{A}=A_1(k)= 
\frac{2aK^2(k)}{bL^2}(2-k^2)-\frac{a^3}{24b^2}\, .
\end{array}\right.
$$
In addition, $v(x,t)=\psi_k(x-c_1(k)t)$ is a periodic traveling solution of \eqref{gd1}, which is orbitally stable in $H^1_{per}([0,L])$.\label{gdteo1}
\end{theorem}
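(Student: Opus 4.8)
The plan is to reduce Theorem~\ref{gdteo1} to the already-established orbital stability of the dnoidal waves for the focusing mKdV equation (Theorem~\ref{mkdvstabteo}) by exploiting the diffeomorphism $\mathcal{T}$ from \eqref{gd4} and its isometry property (up to a constant) recorded in Lemma~\ref{lemapropP}. First I would verify the algebraic claims in the statement: applying $\mathcal{T}^{-1}$ as in \eqref{gd5} to $\psi_k$ in \eqref{gd1a} produces exactly $\phi_k(x)=\frac{2K(k)}{L}\,\dn\!\left(\frac{2K(k)}{L}x,k\right)$, which is the dnoidal profile \eqref{dnmkdv} with $a=2K(k)/L$ and $\gamma=\mathrm{sgn}(b)=1$ (since $b>0$); hence $\psi_k=\mathcal{T}^{-1}\phi_k$ solves \eqref{gd6}. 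The corresponding relations \eqref{gd9} between $(c,A)$ and $(\tilde c,\tilde A)$ then give the stated $c_1(k)$ and $A_1(k)$ upon substituting $c=c(k)=\frac{4K^2(k)}{L^2}(2-k^2)$ (from \eqref{cmkdv}) and $A=0$. That $v(x,t)=\psi_k(x-c_1(k)t)$ is a periodic traveling-wave solution of \eqref{gd1} follows because $\mathcal{T}$ maps solutions of \eqref{gd1} to solutions of \eqref{gd2}, and the wave speeds differ only by the fixed shift $a^2/(4b)$.

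Next I would transfer the stability itself. Fix $k\in(0,1)$ and let $\varepsilon>0$. By Theorem~\ref{mkdvstabteo}, $\phi_k$ is orbitally stable for \eqref{gd2} in $H^1_{per}([0,L])$: there is $\delta>0$ such that any solution $u(t)$ of \eqref{gd2} with $\|u(0)-\phi_k\|_{H^1_{per}}<\delta$ satisfies $\sup_t \rho(u(t),\phi_k)<\varepsilon'$, where I pick $\varepsilon'$ later. Now suppose $v(t)$ solves \eqref{gd1} with $v(0)$ close to $\psi_k$ in $H^1_{per}([0,L])$. Then $u(t):=\mathcal{T}v(t)$ solves \eqref{gd2}, and since $\mathcal{T}$ is given by \eqref{gd4} as multiplication by the constant $\sqrt{b/6}$ composed with a space-time translation plus an additive constant, $\|u(0)-\phi_k\|_{H^1_{per}} = \|\mathcal{T}v(0)-\mathcal{T}\psi_k\|_{H^1_{per}}$ is controlled by a constant multiple of $\|v(0)-\psi_k\|_{H^1_{per}}$ (translations are isometries on $H^1_{per}$, and the additive constants cancel in the difference). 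Choosing the initial neighborhood of $\psi_k$ small enough makes $\|u(0)-\phi_k\|_{H^1_{per}}<\delta$. Applying the mKdV stability and then Lemma~\ref{lemapropP}, which gives $\rho(v(t),\psi_k)=C^{-1}\rho(\mathcal{T}v(t),\mathcal{T}\psi_k)=C^{-1}\rho(u(t),\phi_k)$, we get $\sup_t\rho(v(t),\psi_k)\le C^{-1}\varepsilon'<\varepsilon$ for $\varepsilon'$ chosen as $C\varepsilon$. This is precisely orbital stability of $\psi_k$ in $H^1_{per}([0,L])$.

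One point requiring a little care is the difference between the additive-constant translation built into $\mathcal{T}$ and the spatial phase $r$ appearing in $\rho$: the constant $a/(2b)$ added by $\mathcal{T}$ is a constant \emph{function}, not a phase shift, and although it does not cancel in the $\|\cdot\|_{H^1_{per}}$ norm of $u(0)-\phi_k$ by itself, it \emph{does} cancel in the difference $\mathcal{T}v(0)-\mathcal{T}\psi_k$ since both get the same additive constant; likewise the time-dependent speed correction $-\frac{a^2}{4b}t$ is the same for $v$ and $\psi_k$ and hence cancels. So the only genuine constants to track are the multiplicative factor $\sqrt{b/6}$ (and its reciprocal), which is exactly what Lemma~\ref{lemapropP} accounts for. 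I expect the main obstacle to be purely bookkeeping: carefully confirming the identifications in \eqref{gd8}--\eqref{gd9} with $a=6\sigma$-type normalizations replaced by the general $(a,b)$ here, and making sure the constant $\gamma=\mathrm{sgn}(b)=1$ is consistently used so that \eqref{gd7} is indeed the focusing mKdV \eqref{gd2} whose dnoidal waves were shown stable in Theorem~\ref{mkdvstabteo}. Once those identifications are in place, the stability transfer is immediate from Lemma~\ref{lemapropP} and Theorem~\ref{mkdvstabteo}.
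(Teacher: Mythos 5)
Your proposal is correct and follows essentially the same route as the paper: verify via the transformation $\mathcal{T}$ (equivalently $\mathcal{T}^{-1}$ and the relations \eqref{gd8}--\eqref{gd9}) that $\psi_k$ corresponds to the dnoidal mKdV profile \eqref{dnmkdv} with $A=0$, and then transfer orbital stability from Theorem \ref{mkdvstabteo} through the distance-preserving (up to a constant) property of $\mathcal{T}$ in Lemma \ref{lemapropP} by the same $\varepsilon$--$\delta$ argument. The bookkeeping points you flag (cancellation of the additive constant and of the speed shift, and the constant $\sqrt{b/6}$) are exactly what the paper's proof relies on.
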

\begin{proof} The first statement follows from the fact that 
\begin{equation}\phi_k(\xi)=\mathcal{T}(\psi_{(c_1(k),A_1(k))}(\xi))=\frac{2K(k)}{L}\dn\left(\frac{2K(k)}{L}\xi,k\right)
\label{gd10}
\end{equation}
is a solution of \eqref{gd7} with (see \eqref{dnmkdv} and \eqref{cmkdv})
$$\left\{\begin{array}{l}\ds{c}(k)=\frac{4K^2(k)}{L^2}(2-k^2),\\
\\
\ds{A}(k)=0.
\end{array}\right.
$$

Now, let $v(t)$ be a solution of \eqref{gd1} with initial data  $v_0$. From Lemma \ref{lemapropP}, it follows that
\begin{equation}
\rho(v(t),\psi_k)=C\rho(\mathcal{T}v(t),\mathcal{T}\psi_k), \,\ \mbox{for all}\,\  t\in\R,
\label{gd11}
\end{equation}
and  $\mathcal{T}v(t)$ is a solution of the mKdV equation with initial data $\mathcal{T}v_0$.
Theorem \ref{mkdvstabteo} implies that the dnoidal solution \eqref{gd10} is orbitally stable by the mKdV flow in $H^1_{per}([0,L])$. Thus, given $\varepsilon_1>0$ there exists $\delta_1>0$ such that if $\rho(\mathcal{T}u_0,\phi_k)<\delta_1$ then
\begin{equation}
\sup_{t\in\R}\rho(\mathcal{T}v(t),\phi_k)<\varepsilon_1.
\label{gd12}
\end{equation}
Next, let  $\varepsilon>0$ be given and choose $\varepsilon_1>0$ such that $\varepsilon_1<\varepsilon/C$. In addition, by choosing $\delta>0$ such that $\delta<C\delta_1$ we see that if $\rho(v_0,\psi_k)<\delta$ then
$$
\rho(\mathcal{T}v_0,\phi_k)=\frac{1}{C}\rho(v_0,\psi_k)<\frac{1}{C}\delta<\delta_1.
$$
Hence, from \eqref{gd11} and \eqref{gd12}, we obtain
$$\sup_{t\in\R}\rho(v(t),\psi_k)=C\sup_{t\in\R}\rho(\mathcal{T}v(t),\phi_k)<\varepsilon,$$
which proves the orbital stability of $\psi_k$.
The proof of the theorem is thus completed.
\end{proof}

\begin{theorem}
Let $\varphi_k$ be defined as
\begin{equation}
\varphi_k(\xi)=\frac{4\sqrt{3}K(k)}{g(k)\sqrt{b}L}\left(\frac{\dn^2\left(\frac{2K(k)}{L}\xi,k\right)}{1+\beta^2\sn^2\left(\frac{2K(k)}{L}\xi,k\right)}\right)-\frac{a}{2b}.
\label{gd1b}
\end{equation}
Then, $\varphi_k(\xi)$ is a solution of \eqref{gd6} with
$$\left\{\begin{array}{l}\ds\tilde{c} = c_2(k)=\frac{16K^2(k)}{L^2}\sqrt{k^4-k^2+1}-\frac{a^2}{4b},\\
\\
\ds\tilde{A}=A_2(k)= 
\frac{8aK^2(k)}{bL^2}\sqrt{k^4-k^2+1} -\frac{32\sqrt{2}K^3(k)\sqrt{2\sqrt{k^4-k^2+1}+2k^2-1}}{3\sqrt{b}L^3\left(\sqrt{k^4-k^2+1}-2k^2+1\right)^{-1}} -\frac{a^3}{24b^2}\, .
\end{array}\right.
$$
In addition, $v(x,t)=\varphi_k(x-c_2(k)t)$ is a solution of \eqref{gd1} orbitally stable in $H^1_{per}([0,L])$.\label{gdteo2}
\end{theorem}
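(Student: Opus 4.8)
The plan is to mirror the proof of Theorem \ref{gdteo1}: transfer the stability question to the mKdV equation through the diffeomorphism $\mathcal{T}$ of Lemma \ref{lemapropP}, but now invoke the dnoidal--snoidal profile \eqref{mkdv29} and its stability (Theorem \ref{mkdvstabteo2}) rather than the dnoidal one.

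First I would identify $\mathcal{T}\varphi_k$ with the mKdV profile \eqref{mkdv29}. Since $b>0$ we have $\gamma=\mathrm{sgn}(b)=1$, and a short computation using $4\sqrt{3}/\sqrt{6}=4/\sqrt{2}$ gives
$$
\mathcal{T}\varphi_k(\xi)=\sqrt{\tfrac{b}{6}}\Big[\varphi_k(\xi)+\tfrac{a}{2b}\Big]=\frac{4K(k)}{\sqrt{2}\,g(k)L}\left(\frac{\dn^2\!\left(\tfrac{2K(k)}{L}\xi,k\right)}{1+\beta^2\sn^2\!\left(\tfrac{2K(k)}{L}\xi,k\right)}\right)=\phi_k(\xi),
$$
which is exactly \eqref{mkdv29} with the same $\beta^2$ and $g(k)$. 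Since $\phi_k$ solves \eqref{gd7} with $c=c(k)$ and $A=A(k)$ as recorded in Subsection \ref{dssub}, the inverse correspondence \eqref{gd9} yields $\tilde c=c(k)-\frac{a^2}{4b}=c_2(k)$ and, after simplifying $\sqrt{6/b}\cdot(-32/(3\sqrt{3}))=-32\sqrt{2}/(3\sqrt{b})$, also $\tilde A=\sqrt{6/b}\,A(k)+c(k)\frac{a}{2b}-\frac{a^3}{24b^2}=A_2(k)$. Hence $\varphi_k$ is a periodic solution of \eqref{gd6} with speed $\tilde c=c_2(k)$, so that $v(x,t)=\varphi_k(x-c_2(k)t)$ is indeed a periodic traveling wave of \eqref{gd1}; this is the first assertion.

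For the orbital stability, let $v(t)$ solve \eqref{gd1} with data $v_0$. By Lemma \ref{lemapropP}, $\mathcal{T}v(t)\in C(\R,H^1_{per}([0,L]))$ solves the mKdV equation \eqref{gd2} with $\gamma=1$ and data $\mathcal{T}v_0$, and $\rho(v(t),\varphi_k)=C\,\rho(\mathcal{T}v(t),\phi_k)$ for all $t\in\R$, with $C=\sqrt{b/6}$. By Theorem \ref{mkdvstabteo2} the profile $\phi_k=\mathcal{T}\varphi_k$ is orbitally stable under the mKdV flow in $H^1_{per}([0,L])$. Given $\varepsilon>0$, I would choose $\varepsilon_1<\varepsilon/C$, take $\delta_1>0$ to be the stability radius of $\phi_k$ corresponding to $\varepsilon_1$, and pick $\delta<C\delta_1$. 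If $\rho(v_0,\varphi_k)<\delta$ then $\rho(\mathcal{T}v_0,\phi_k)=\frac{1}{C}\rho(v_0,\varphi_k)<\delta_1$, hence $\sup_{t\in\R}\rho(\mathcal{T}v(t),\phi_k)<\varepsilon_1$, and therefore $\sup_{t\in\R}\rho(v(t),\varphi_k)=C\sup_{t\in\R}\rho(\mathcal{T}v(t),\phi_k)<\varepsilon$, which is the claimed orbital stability.

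The only laborious part is the algebraic bookkeeping in the first step: checking term by term that $\mathcal{T}\varphi_k$ reproduces \eqref{mkdv29}, and that the relations \eqref{gd8}--\eqref{gd9} propagate $c(k),A(k)$ into precisely $c_2(k),A_2(k)$. Conceptually, however, the argument is identical to that of Theorem \ref{gdteo1}: once the profile is recognized as a stable mKdV periodic wave, Lemma \ref{lemapropP} transports the estimate back to \eqref{gd1} verbatim, so no new stability analysis is required.
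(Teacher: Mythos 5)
Your proposal is correct and follows essentially the same route as the paper: identify $\mathcal{T}\varphi_k$ with the dnoidal--snoidal profile \eqref{mkdv29}, read off $c_2(k),A_2(k)$ from \eqref{gd9}, and transfer the stability of Theorem \ref{mkdvstabteo2} back through Lemma \ref{lemapropP} exactly as in the proof of Theorem \ref{gdteo1}. The only difference is that you write out the $\varepsilon$--$\delta$ transfer explicitly, whereas the paper simply refers to the previous theorem's argument.
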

\begin{proof} The first statement now follows from the fact that
\begin{equation}\phi_k(\xi)=\mathcal{T}(\varphi_k(\xi))=\frac{4K(k)}{\sqrt{2}g(k)L}\left(\frac{\dn^2\left(\frac{2K(k)}{L}\xi,k\right)}{1+\beta^2\sn^2\left(\frac{2K(k)}{L}\xi,k\right)}\right)\label{tp}\end{equation}
is a solution of \eqref{gd7} with
$$\left\{\begin{array}{l}\ds c(k)=\frac{16K^2(k)}{L^2}\sqrt{k^4-k^2+1},\\
\\
\ds{A}(k)= 
 -\frac{32K^3(k)}{3\sqrt{3}L^3}\left(\sqrt{k^4-k^2+1}-2k^2+1\right)\sqrt{2\sqrt{k^4-k^2+1}+2k^2-1}.
\end{array}\right.
$$
In fact, the periodic travelling wave $\phi_k$ in \eqref{tp} is exactly the solution of the mKdV which we proved to be orbitally stable in $H^1_{per}([0,L])$ (see Theorem \ref{mkdvstabteo2}).
Therefore, the rest of the proof follows exactly the same arguments as in the previous theorem. 
\end{proof}

\subsection{The ILW equation} 

Next we consider the Intermediate Long Wave (ILW) equation
\begin{equation}\label{ilw}
u_t-\mathcal{M}_\delta u+\partial_x(u^2)=0,
\end{equation}
where $u=u(x,t)$ is $L$-periodic in the spatial variable and the linear operator $\mathcal{M}_\delta$ is defined via Fourier transform by
$$
\widehat{\mathcal{M}_\delta u}(m)=\left( \frac{2\pi m}{L}{\rm coth}\left( \frac{2\pi m \delta}{L}\right) -\frac{1}{\delta} \right)\widehat{u}(m), \qquad \delta>0.
$$
Equation \eqref{ilw} is derived as a model equation for long, weakly nonlinear internal gravity waves in a stratified fluid of finite depth. The parameter $\delta>0$ is close related with the depth of the fluid (see e.g., \cite{kkb}).

The operator $\mathcal{M}=\mathcal{M}_\delta$ satisfies our assumptions with $\gamma=0$ and $s_1=s_2=1$. The authors in \cite{acn} shown that \eqref{ilw} has  traveling wave solution of the form $u(x,t)=\phi_k(x-c(k)t)$, with $k\in J:=(0,k_1)\subset(0,1)$,
$$
c(k):=\frac{1}{\delta}-\frac{8\pi K(k)}{L^2K(k')}-\frac{4K(k)}{L}\left[Z(\alpha,k')+\frac{\cn(\alpha,k')\dn(\alpha,k')}{\sn(\alpha,k')}\right]>0, \quad \alpha=\frac{4\delta K(k)}{L},
$$
$$
A(k)=\frac{1}{L}\int_0^L\varphi_k^2(x)dx,
$$
and
$$
\phi_k(x)=-\frac{4K(k)}{L}Z(\delta y,k)-\frac{4\delta \pi}{L^2}\frac{K(k)}{K(k')}+\frac{4K(k)}{L}\frac{\dn^2(y,k)\cn(\delta y,k')\sn(\delta y,k')\dn(\delta y,k')}{1-\dn^2(y,k)\sn^2(\delta y,k)}.
$$ 
Here, $y=\frac{2K(k)x}{L}$, $k'=\sqrt{1-k^2}$ and $Z(x,k)$ stands for the Jacobian Zeta function defined by
$$
Z(x,k)=\int_0^x\left( \dn(s,k)-\frac{E(k)}{K(k)}\right) ds.
$$
This shows that {\bf (H0)} holds. Also, by using the total positivity theorem as described in the beginning of this section, it was shown that the linearized operator $\Lk=\mathcal{M}_\delta+c-2\phi_k$ satisfies {\bf (H1)-(H2)}. Assumptions {\bf (H3)} and {\bf (H4)} were also checked in \cite{acn} taking the advantage of the explicit form of the solutions $\phi_k$. Consequently, one deduces the orbital stability of the periodic wave $\phi_k$. For details we refer the reader to \cite[Section 6]{acn}.

\subsection{The Schamel equation}

This subsection is devoted to the so called Schamel equation
\begin{equation}
\displaystyle
u_t+\partial_x(u_{xx}+|u|^{3/2})=0,
\label{schamel}
\end{equation}
which governs the behaviour of weakly nonlinear ion-acoustic solitons that are modified by
the presence of trapped electrons. Such equations was first derived  by Schamel in \cite{sch}, \cite{sch1}. The existence of periodic solutions can be obtained by using the quadrature method. In particular, in \cite{cnp} the authors established that
\begin{eqnarray}\label{stab.26}
\phi_{k}(\xi)&=&\displaystyle\frac{6400 K(k)^4}{9 L^4}\displaystyle\left[1-2k^2+\displaystyle\sqrt{1-k^2+k^4}+3k^2\cn^2\displaystyle\left(\displaystyle\frac{2K(k)\xi}{L};k\right)\right]^2\nonumber\\
\\
&=&\displaystyle\frac{6400 K(k)^4}{9 L^4}\displaystyle\left[k^2-2+\displaystyle\sqrt{1-k^2+k^4}+3\dn^2\displaystyle\left(\displaystyle\frac{2K(k)\xi}{L};k\right)\right]^2,\nonumber
\end{eqnarray}
is an $L$-periodic traveling wave solution of \eqref{schamel}, where for each $k\in(0,1)=:J$,
\begin{eqnarray}\label{stab.21}\
c=\displaystyle\frac{64 K(k)^2}{L^2}\displaystyle\sqrt{1-k^2+k^4},
\end{eqnarray}
and
\begin{eqnarray}\label{stab.25}
A=\displaystyle\frac{204800K(k)^6}{27 L^6} \displaystyle\left[-2k^6+3k^4+3k^2-2-2(1-k^2+k^4)^{\frac{3}{2}}\right].
\end{eqnarray}
This shows that {\bf (H0)} holds with $J=(0,1)$. Properties {\bf (H1)-(H2)} can be established by studying the periodic eigenvalue problem
\begin{equation}\label{lame1}
\begin{cases}
 -{y}''+\displaystyle\left(c(k)-\displaystyle\frac{3}{2}\phi_{k}^{\frac{1}{2}}\right){y}=\lambda y, \\
 y(0)=y(L),\;\;y'(0)=y'(L),
\end{cases}
\end{equation}
or using the same technique as in Subsection \ref{dssub} (see \cite{cnp} and \cite{depas} for details). On the other hand, {\bf (H3)-(H4)} can be checked by using the explicit form of the quantities involved. Consequently, Theorem \ref{gkdvmaintheorem} may also be applied to obtain the orbital stability of the periodic traveling waves \eqref{stab.26}. We refer the reader to \cite{cnp} for the details.

\section{Extension to regularized equations}\label{gbbmsec}

In this section, we extend the theory developed in Section \ref{section2} to regularized equations of the form
\begin{equation}\label{regeq}
u_t+\mtm u_t+\partial_x(u+f(u))=0,
\end{equation}
where $\mtm$ and $f$ satisfy the  assumptions posed in the introduction.
Equations of this form arise as models of wave propagation in a variety of physical contexts.

Traveling waves solutions for \eqref{regeq} are also special solutions having the form $u(x,t)= \phi(x-ct)$. By replacing this form of wave in \eqref{regeq}, $\phi$ must solve 
\begin{equation}\label{regeqsol}
c\mtm \phi+(c-1)\phi-f(\phi)+A=0,
\end{equation}
where again $A$ is an integration constant.  

It is well-known that \eqref{regeq} has three conserved quantities, namely,
\begin{equation}\label{energyreg}
E(u)=\frac{1}{2}\int_0^L (u\mtm u-2F(u))dx, \quad \mbox{with} \quad F(u)=\int_0^uf(s)ds,
\end{equation}
\begin{equation}\label{massreg}
Q(u)=\frac{1}{2}\int_0^L (u^2+u\mtm u)dx,
\end{equation}
and
\begin{equation}\label{mmreg}
V(u)=\int_0^L u\,dx.
\end{equation}
Solutions of \eqref{regeqsol} are now critical points of the functional
$$
E+(c-1)Q+AV.
$$
With this in hand, our assumptions read as follows.

\begin{itemize}
\item[{\bf (P0)}] There are an interval $J\subset\R$, $C^1$-functions $k\in J\mapsto c=c(k)$ and $k\in J\mapsto A=A(k)$, and a nontrivial smooth curve of $L$-periodic solutions for \eqref{regeqsol}, $ k\in J  \mapsto \phi_k:=\phi_{(c(k),A(k))} \in H^{s_2}_{per}([0,L])$ with $c=c(k)>1$.
\end{itemize}

\begin{itemize}
\item[{\bf(P1)}] The linearized operator $\mathcal{L}_k:=c\mtm+(c-1)-f'(\phi_k)$, defined on a dense subspace of $L^2_{per}([0,L])$, has a unique negative  eigenvalue, which is simple.
\item[{\bf(P2)}] Zero is a simple eigenvalue of $\mathcal{L}_k$ with associated  eigenfunction $\phi_k'$.
\end{itemize}

\begin{itemize}
\item[{\bf(P3)}] The quantity $\Phi$ defined by $\Phi:=\left\langle\mathcal{ L}_k\left(\frac{\partial \phi_k}{\partial k}\right),\frac{\partial \phi_k}{\partial k}\right\rangle$
 is negative.
 \item[{\bf(P4)}] It holds $M_k(\phi_k)\neq -\dfrac{\partial c}{\partial k}Q(\phi_k)$, where $M_k(u):=\ds\frac{\partial c}{\partial k}Q(u)+\frac{ \partial A}{\partial k} V(u)$.
\end{itemize}

The theory developed in Section \ref{section2} extends mutatis mutandis to the present situation and we can prove the following.

\begin{theorem}[Orbital stability]\label{regtheorem}
Under assumptions {\bf (P0)-(P4)},
for each $k\in J$, the periodic traveling wave $\phi_k$
 is orbitally stable by the flow of \eqref{regeq} in $H^{s_2/2}_{per}([0,L])$, that is,
 for any $\varepsilon>0$, there exists $\delta>0$ such that if $u_0\in H^{s_2/2}_{per}([0,L])$ satisfies
$$
\|u_0-\phi\|_{H^{s_2/2}_{per}}<\delta,
$$
then the solution $u(t)$ of \eqref{regeq}, with initial data $u_0$,  satisfies
$$
\sup_{t\in\R}\inf_{r\in\R}\|u(t)-\phi_k(\cdot+r)\|_{H^{s_2/2}_{per}}<\varepsilon.
$$
\end{theorem}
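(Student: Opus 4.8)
The plan is to mirror, essentially line by line, the proof of Theorem \ref{gkdvmaintheorem} given in Section \ref{section2}, tracking the one structural change: the conserved quantity $Q$ in \eqref{massreg} now carries the extra term $\frac{1}{2}\int_0^L u\mtm u\,dx$, so $Q$ is no longer the $L^2_{per}$-norm but rather (a multiple of) the $H^{s_2/2}_{per}$-norm whenever $\mtm$ is positive. Since {\bf (P0)} forces $c>1$, and hence the operator $c\mtm+(c-1)$ appearing in $\mathcal{L}_k$ is positive, the quadratic form $Q$ is comparable to $\|\cdot\|_{H^{s_2/2}_{per}}^2$ on the relevant subspace; this is exactly what is needed for the coercivity machinery to go through. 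First I would record that $E+(c-1)Q+AV$ plays the role of $F_k$, that its first Fréchet derivative vanishes at $\phi_k$ by \eqref{regeqsol}, and that its second derivative is precisely $\mathcal{L}_k=c\mtm+(c-1)-f'(\phi_k)$, so that {\bf (P1)}--{\bf (P2)} supply the same spectral picture ($\lambda_0<\lambda_1=0<\lambda_2$) used before.

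Next I would re-establish the three preparatory lemmas in this setting. Lemma \ref{bbm14} (existence of the $C^1$ modulation $\omega$ via the implicit function theorem applied to $(u(\cdot+\omega),\phi_k')_{L^2_{per}}=0$) is unchanged since it uses only {\bf (P2)} and smoothness of $\phi_k$. Lemma \ref{bbm15} (positivity of $\langle\mathcal{L}_k\psi,\psi\rangle$ on the codimension-two subspace $\mathcal{A}$ cut out by $\{M_k'(\phi_k)\}^\perp\cap\{\phi_k'\}^\perp$) goes through verbatim once one checks that the argument requires only {\bf (P1)}, {\bf (P2)}, {\bf (P3)} together with the fact that $\mathcal{L}_k$ plus a suitable positive multiple of the identity is positive — which holds because $c\mtm+(c-1)$ is positive. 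Here I would note explicitly that $\partial\phi_k/\partial k\notin\ker\mathcal{L}_k$-direction issues are handled exactly as before via {\bf (P3)}, with $d''(c)$ replaced by $\Phi$. Then Lemma \ref{lemacoercividadebbm} (local minimization: $F_k(u)-F_k(\phi_k)\geq C\rho(u,\phi_k)^2$ on $\Sigma_k\cap U_\varepsilon(\phi_k)$) is obtained by the same Taylor-expansion argument: write $v=u(\cdot+\omega(u))-\phi_k=C_1M_k'(\phi_k)+y$ with $y\in\mathcal{T}_k$, use the constraint $M_k(u)=M_k(\phi_k)$ to get $C_1=O(\|v\|^2)$, expand $F_k$ to second order, discard the $O(\|v\|^3)$ and $O(\|v\|^4)$ remainders, apply Lemma \ref{bbm15} to the $\langle\mathcal{L}_ky,y\rangle$ term, and finally use $\|y\|^2\geq\|v\|^2+o(\|v\|^2)$.

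With these in hand the proof of orbital stability itself is a contradiction argument identical to the one for Theorem \ref{gkdvmaintheorem}: assume instability, extract initial data $w_n\to\phi_k$ in $\rho$ whose orbits leave an $\varepsilon$-neighborhood, pick the first exit time $t_n$ with $\rho(u_n(t_n),\phi_k)=\varepsilon/2$, and use the conservation of $Q$ and $V$ (which holds for \eqref{regeq} by \eqref{energyreg}--\eqref{mmreg}) to define $f_n(\alpha)=M_k(\alpha u_n(t_n))=\alpha^2 Q_k(w_n)+\alpha V_k(w_n)$; Lemma \ref{lemmaconv1}, whose proof uses only {\bf (P4)} and the convergences $Q_k(w_n)\to Q_k(\phi_k)$, $V_k(w_n)\to V_k(\phi_k)$, then produces $\alpha_n\to1$ with $\alpha_n u_n(t_n)\in\Sigma_k$. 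Claims 1 and 2 (that $\rho(u_n(t_n),\alpha_n u_n(t_n))\to0$ and $\rho(\alpha_n u_n(t_n),\phi_k)\to0$) follow from boundedness of $\|u_n(t_n)\|_{H^{s_2/2}_{per}}$, continuity of $F_k$, and Lemma \ref{lemacoercividadebbm}, yielding $\varepsilon/2\to0$, a contradiction. The statement about global existence is subsumed in the standing well-posedness convention adopted in the Remark after the Definition, applied now to \eqref{regeq}.

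The main obstacle — and really the only place genuine care is needed — is verifying that the coercivity of $\langle\mathcal{L}_k\cdot,\cdot\rangle$ on $\mathcal{A}$ controls the $H^{s_2/2}_{per}$-norm rather than merely the $L^2_{per}$-norm, and dually that $Q$ itself is an $H^{s_2/2}_{per}$-level quantity so that the manifold $\Sigma_k$ and the pseudo-metric $\rho$ interact correctly. Both points hinge on the positivity of $c\mtm+(c-1)$ guaranteed by $c>1$ in {\bf (P0)}, together with the growth bound \eqref{alphacond} on the symbol $\alpha$ which gives $A_1|m|^{s_2}\lesssim\alpha(m)+\text{const}$ for large $m$; this is precisely the mechanism already used implicitly for \eqref{gkdv} in the energy space $H^{s_2/2}_{per}$, and it transfers without modification. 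Everything else is a routine transcription, which is why I would simply assert that the arguments of Section \ref{section2} apply \emph{mutatis mutandis}.
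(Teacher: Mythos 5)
Your proposal is correct and follows essentially the same route as the paper, which itself proves Theorem \ref{regtheorem} by asserting that the machinery of Section \ref{section2} carries over \emph{mutatis mutandis} with $F_k=E+(c-1)Q+AV$ and $\mathcal{L}_k=c\mtm+(c-1)-f'(\phi_k)$. The only point the paper singles out explicitly—that in Lemma \ref{lemmaconv1} the sign (and hence possible negativity) of the coefficient $a=Q_k(\phi_k)$ of the parabola $f(\alpha)=a\alpha^2+b\alpha$ is irrelevant, since $Q$ is now a quadratic form involving $\mtm$—is implicit in your claim that the lemma needs only {\bf (P4)} and the convergences, so your write-up is, if anything, more detailed than the paper's.
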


\begin{remark}
Note that in the proof of Lemma \ref{lemmaconv1} the sign of the coefficients in $$f(\alpha)=a\alpha^2+b\alpha$$ is not relevant. So, the sign of the quantity $Q_k(\phi_k)$ does  not change the arguments in the proof of Lemma \ref{lemmaconv1}.  As a result, the proof of Theorem \ref{regtheorem} is similar to that of Theorem \ref{gkdvmaintheorem}.
\end{remark}

\subsection{The modified BBM equation}
This section is devoted to study the orbital stability of periodic waves for the modified BBM equation
\begin{equation}
\label{mbbmequation}
u_t-u_{xxt}+(u+u^3)_x=0.
\end{equation}
At least from the mathematical point of view, \eqref{mbbmequation} can be viewed as a regularized version of the mKdV equation (see also \cite{bbm}).
 In such a case, the traveling wave $\phi$ must be a solution of the equation
\begin{equation}
\label{mbbmedo}
-c\phi''+(c-1)\phi-\phi^3+A=0,
\end{equation} 
If it is assumed that $A=0$ then \eqref{mbbmedo} has a dnoidal-type solution of the form
\begin{equation}\label{bbmdn}
\phi_c(\xi)=\eta\,\dn \left(\frac{\eta x}{\sqrt{2c}}\right),
\end{equation}
where $\eta$ is a real constant. The stability of the solution \eqref{bbmdn} in the energy space $H^1_{per}([0,L])$ was studied in \cite{abs}. In particular, the authors have shown the orbital stability of $\phi_c$. The method used to obtain the spectral properties was based on the total positivity theory; whereas the techniques to prove the stability itself was based on the classical method.

Our goal here is to study a solution of \eqref{mbbmedo} when $A\neq0$. Indeed,
 by applying the quadrature method and using formula 257.00 in \cite{friedman} we obtain a solution in terms of the elliptic functions given by
\begin{equation}\label{wavebbm}
\phi_k(x)=\frac{4\sqrt{c}K(k)\dn^2\left(\frac{2K(k)}{L}x,k\right)}{g(k)L\left(1+\beta^2\sn^2\left(\frac{2K(k)}{L}x,k\right)\right)},
\end{equation}
where $\beta^2=\sqrt{k^4-k^2+1}+k^2-1$ and $g(k)=\sqrt{\sqrt{k^4-k^2+1}-k^2+\frac{1}{2}}$. Here, both $c$ and $A$ are also considered as functions of $k$. More precisely, 
$$
c(k)=\frac{L^2}{L^2-16K^2(k)\sqrt{k^4-k^2+1}}
$$
and
$$
A(k)=\frac{16c\sqrt{c}K^3(k)(r^4(k)-9)}{3\sqrt{6}L^3r(k)},$$
where $r(k)=\sqrt{2\sqrt{k^4-k^2+1}+2k^2-1}$. Note that since the function $k\in(0,1)\mapsto16K^2(k)\sqrt{k^4-k^2+1}$ is strictly increasing, $c(k)$ has a unique singular point, generally close to 1, which we shall call $k_L$.  Figure \ref{figmbbm} illustrates the behaviour of the functions $c(k)$ and $A(k)$ with $k\in(0,k_L)$.
\begin{figure}[htb]
	\centering
	\subfigure{\includegraphics[scale=0.3]{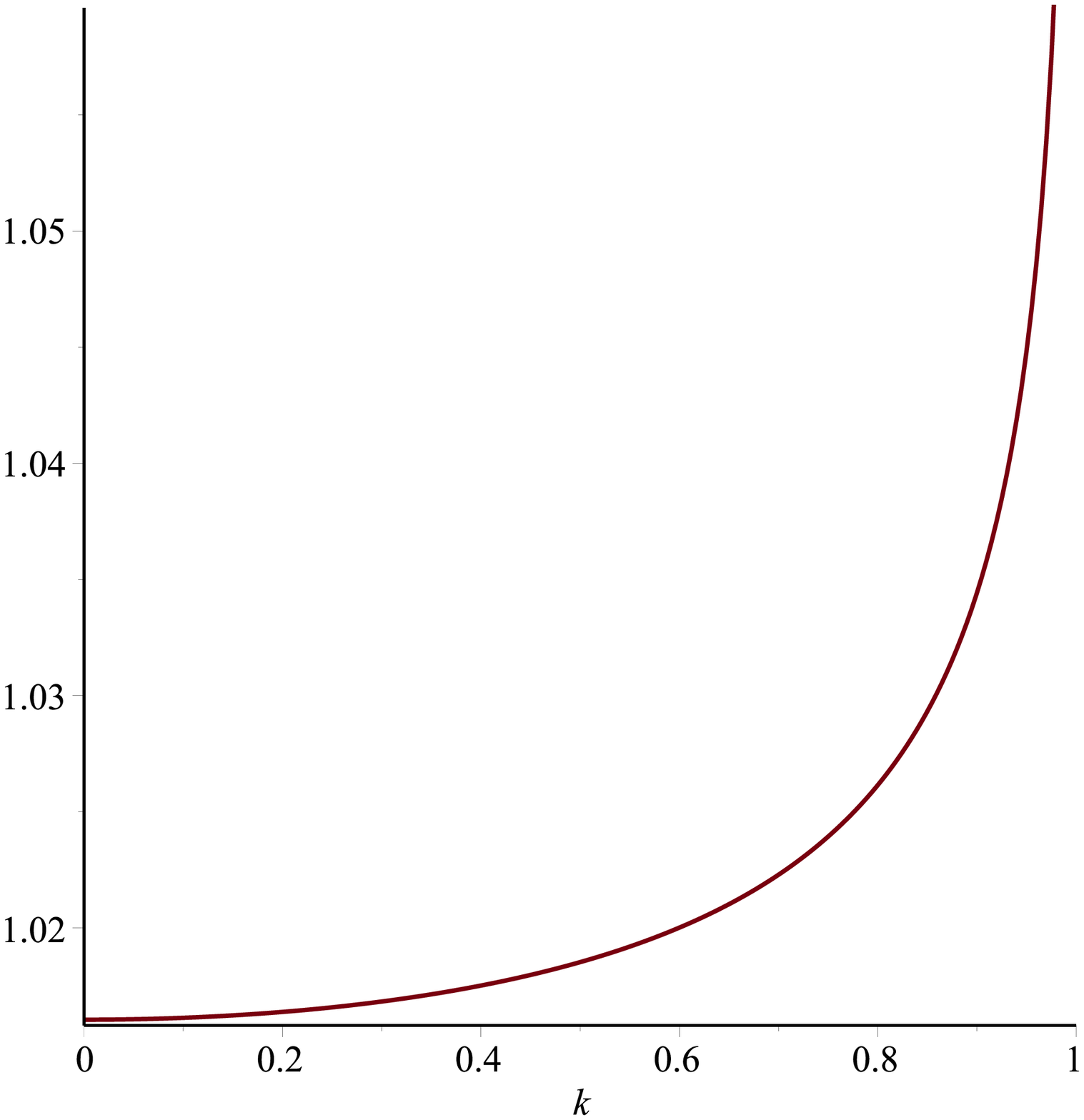}}
	\qquad
	\subfigure{\includegraphics[scale=0.3]{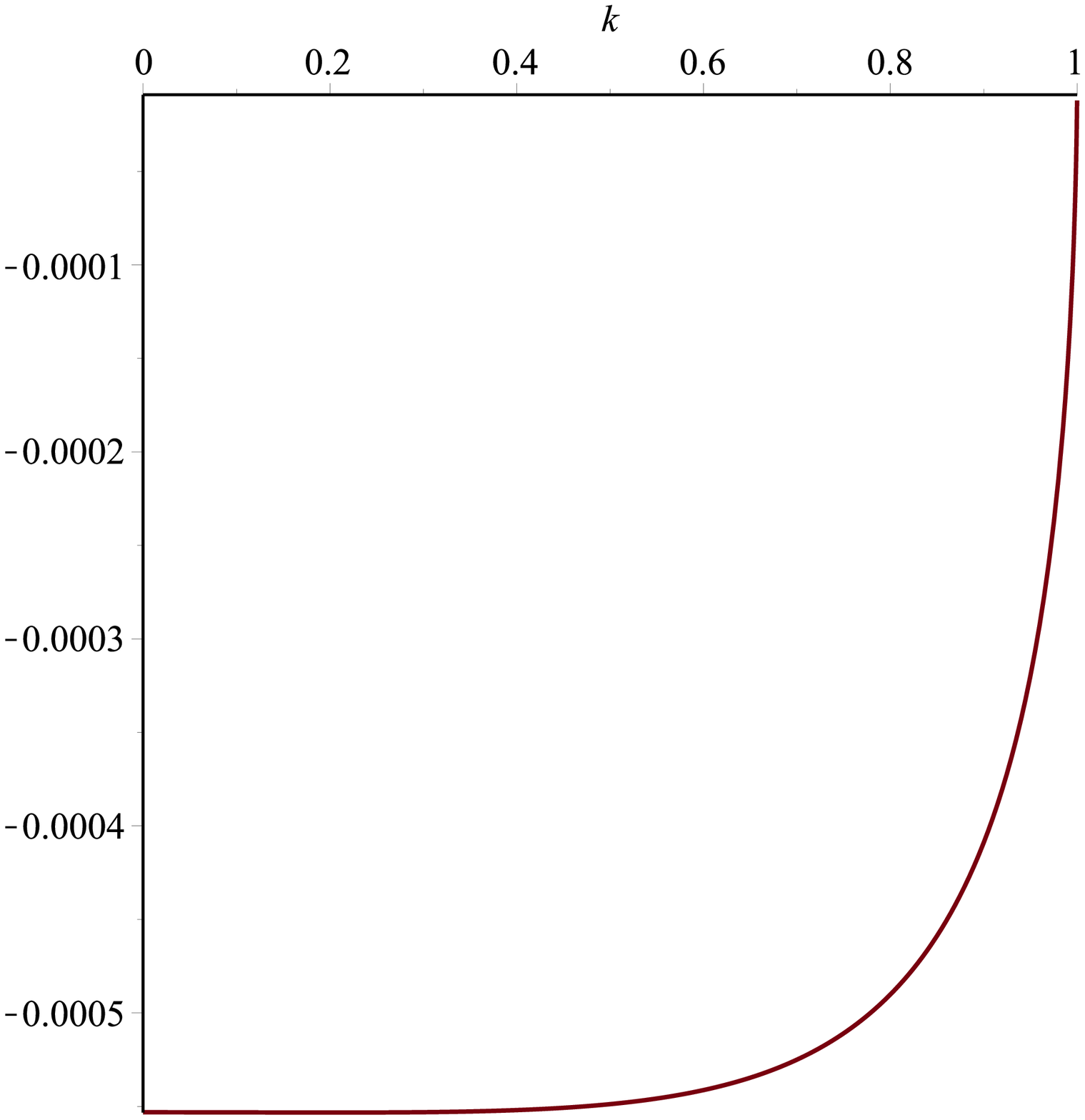}}
	\caption{Left: Graph of $k\in(0,k_L)\mapsto c(k)$. Right: Graph of $k\in(0,k_L)\mapsto A(k)$. In both cases, $L=50$ .}
	\label{figmbbm}
\end{figure}

Notice the function $c(k)$ is increasing on $(0,k_L)$ and $c(k)>1$. Thus, we must have
$$\frac{L^2}{L^2-4\pi^2}>1.$$
This inequality implies the period $L$ of the profile $\phi_k$ in \eqref{wavebbm} must satisfy $L>2\pi$. As a consequence, the condition ${\bf (P0)}$ is fulfilled, with $J:=(0,k_L)$.

In what follows we check conditions {\bf (P1)-(P2)}. We will use a similar analysis as in Subsection \ref{subsec3.2}.  Since, $c(k)>1$ it suffices to check such conditions for
	$$ 
	\tilde{\Lk}=-\partial_x^2+\frac{(c(k)-1)}{c(k)}- \frac{3}{c(k)}\phi_k^2.
	$$
Thus, because $c$ and $\phi_k$ depends smoothly on $k$, $\tilde{\Lk}$ is isonertial, that is, the inertial index $(n,z)$ does not depend on $k$. Therefore, as before it suffices to fix $k\in(0,k_L)$ and $L>0$. Let us fix $k_0:=0.5$ and $L_0=50$. Since $\phi_{k_0}'$ has two zeros in the interval $[0,L_0)$ and 	$\tilde{\Lk}(\phi_k')= 0,$
	we obtain that zero is the second or the third eigenvalue of $\mathcal{L}_{k_0}$. In order to show that zero is indeed the second one, it suffices to prove that
	\begin{equation}
	\theta:=\frac{y'(L_0)}{\phi_{k_0}''(0)}<0,
	\label{formulathetambbm}
	\end{equation}
	where $y$ is the unique solution of
	\begin{equation}
	\left\{\begin{array}{l}
	\ds-y''+\frac{1}{c(k_0)}\left[(c(k_0)-1)-3\phi_{k_0}^{2}\right]y=0,\\
	y(0)=-\frac{1}{\phi_{k_0}''(0)},\\
	y'(0)=0.
	\end{array}\right.
	\label{pvialoisiombbm}
	\end{equation}
	The constant $\theta$ can be determined by solving numerically \eqref{pvialoisiombbm}. In particular, we deduce that 
	$$\theta\cong-8.516957300\times 10^5.$$
Thus, {\bf (P1)-(P2)} are checked. 
	Table \ref{table1} illustrates some values of $\theta$ if we fix $k_0=0.4$ and choose different values of $L$. Note that $\theta$ is always negative and increases, in absolute value, with $L$.

\begin{table}[htb]
\centering
	\caption{Values of $\theta$ with  $k_0=0.4$.}
	\begin{tabular}{|c|c|c|c|c|} 
		\hline 
		$L=10$ & $L=20$ & $L=200$ & $L=1000$ & $L=100000$ \\ 
		\hline 
		$\theta\cong -166.08$ & $\theta\cong -7976.14$ & $\theta\cong -8.85 \times 10^8$ & $\theta\cong -2.76 \times 10^{12}$ & $\theta\cong -2.73 \times 10^{22}$ \\ 
		\hline 
	\end{tabular}
\label{table1}
\end{table}

We now proceed to check {\bf (P3)}. By deriving  equation (\ref{mbbmedo}), with respect to $k$, we obtain
$$\mathcal{L}_k \left(\frac{\partial \phi_k}{\partial k}\right)=-\frac{\partial c}{\partial k}\left(-\phi_k''+\phi_k\right)-\frac{\partial A}{\partial k}.$$
Therefore, with a similar analysis as in Subsection \ref{subsec3.2}, we can write $\Phi$ as
$$\begin{array}{rl}
\Phi=\ds&-\ds  \frac{\partial c}{\partial k} \frac{\partial }{\partial k}\left(\frac{1}{2}\int_0^L\left(\left(\phi_k'\right)^2+ \phi_k^2\right)dx\right) - \frac{\partial A}{\partial k}\frac{\partial}{\partial k}\int_0^L\phi_k dx\\
\\
=&\ds- \frac{\partial c}{\partial k} \frac{\partial }{\partial k}Q(\phi_k) - \frac{\partial A}{\partial k}\frac{\partial}{\partial k}V(\phi_k).
\end{array}
$$
 
We now can compute numerically the values of  $Q(\phi_k)$ and $V(\phi_k)$ and  obtain $\Phi<0$. An illustration of the behaviour of $\Phi$ as function of $k$ is given in Figure \ref{mbbmP3}.
\begin{figure}[htb]
	\centering
	\subfigure{\includegraphics[scale=0.3]{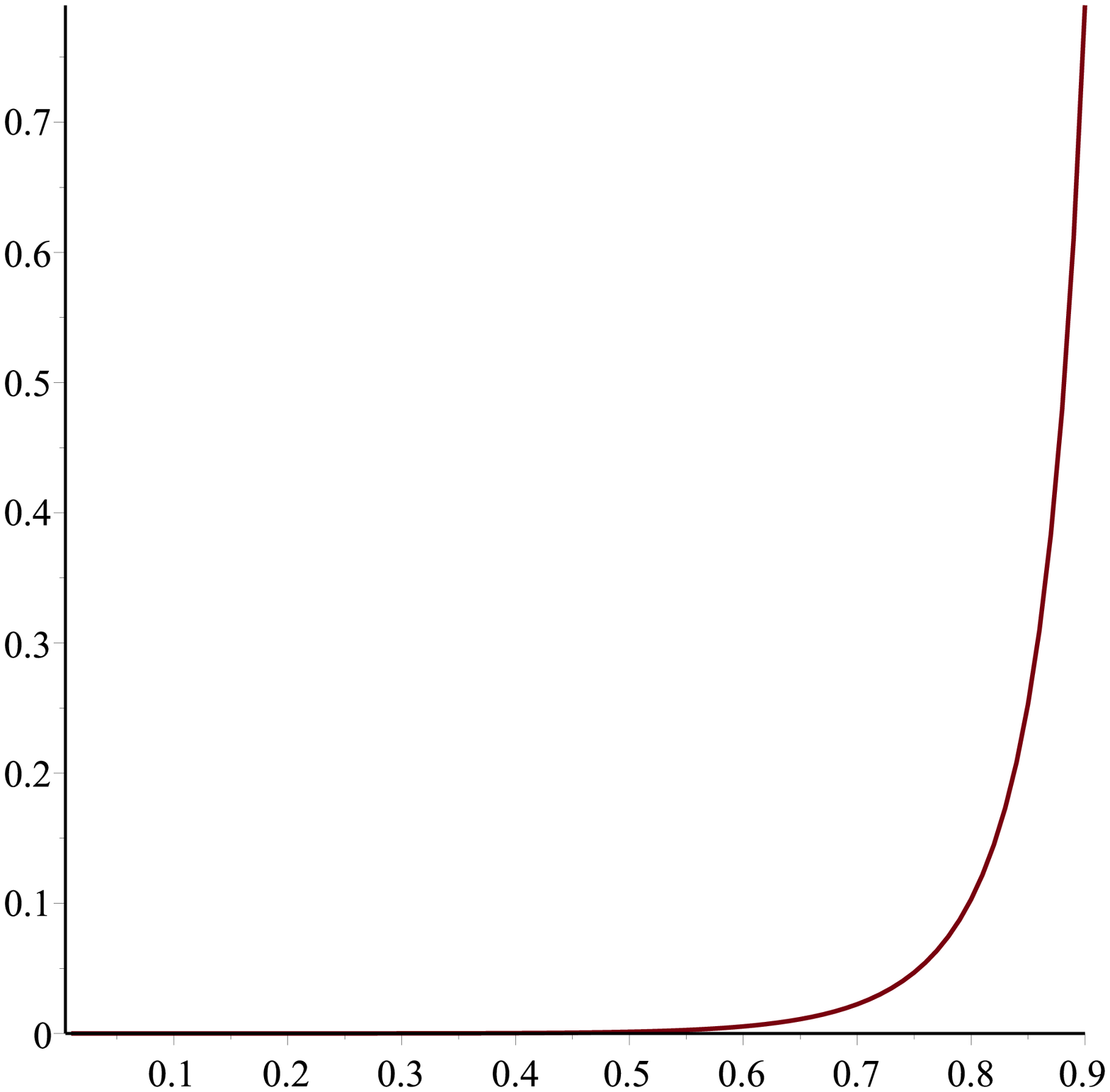}}
	\qquad
	\subfigure{\includegraphics[scale=0.3]{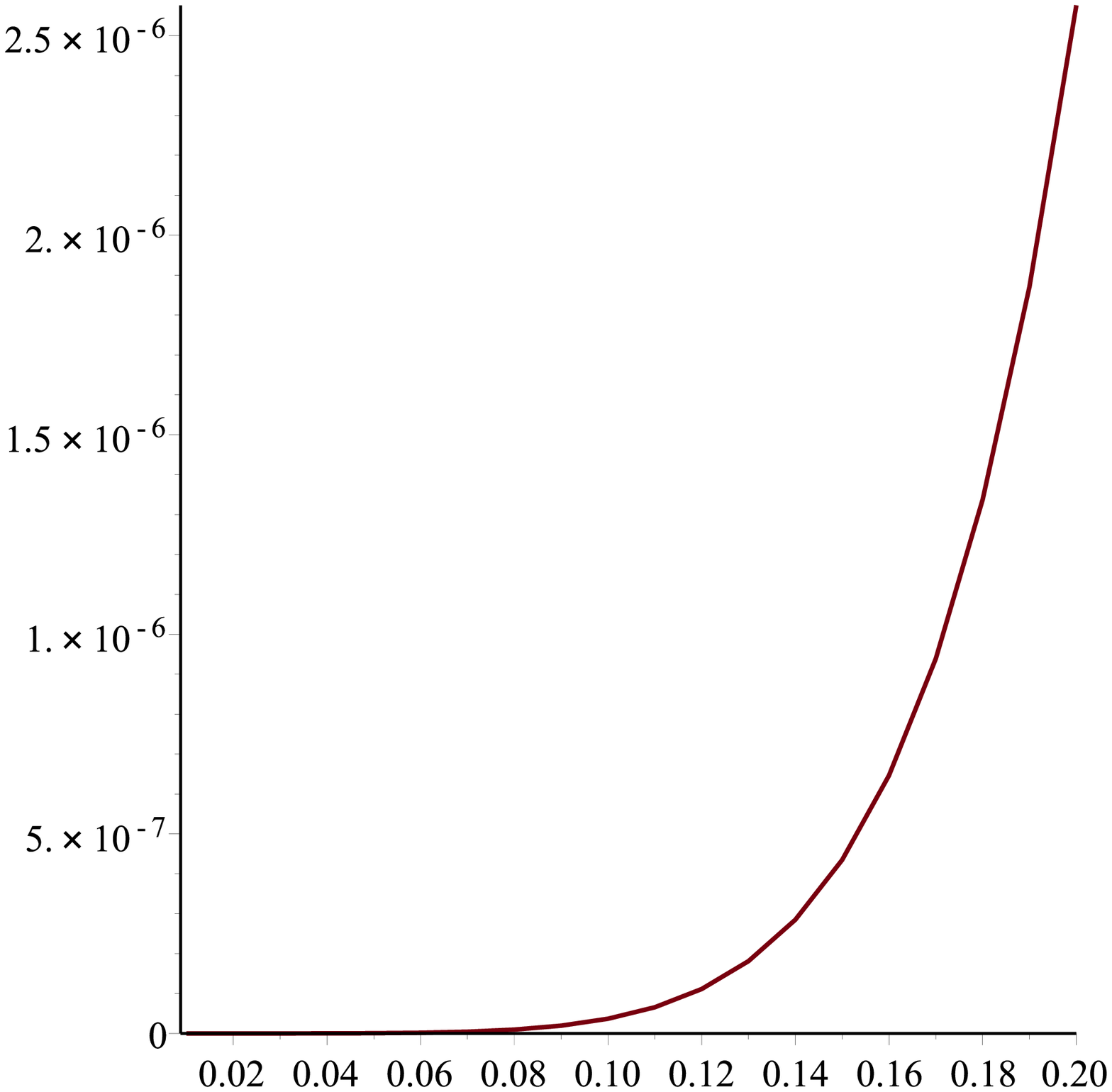}}
	\caption{Left: Graph of $\Phi$ as function of $k$, with $k\in(0,k_L)$. Right: Graph of $\Phi$ as function of $k$, with $k\in(0,0.2)$. In both cases, $L=30$.}
	\label{mbbmP3}
\end{figure}

The expressions of $\phi_k$, $Q$ and $V$ also allow us to  check, numerically, that  $\Psi:=M_k(\phi_k)+\frac{\partial c}{\partial k}Q(\phi_k)\neq0$ and condition {\bf (P4)} hods. See Figure \ref{mbbmP4}.

\begin{figure}[htb]	
	\centering
	\subfigure{\includegraphics[scale=0.3]{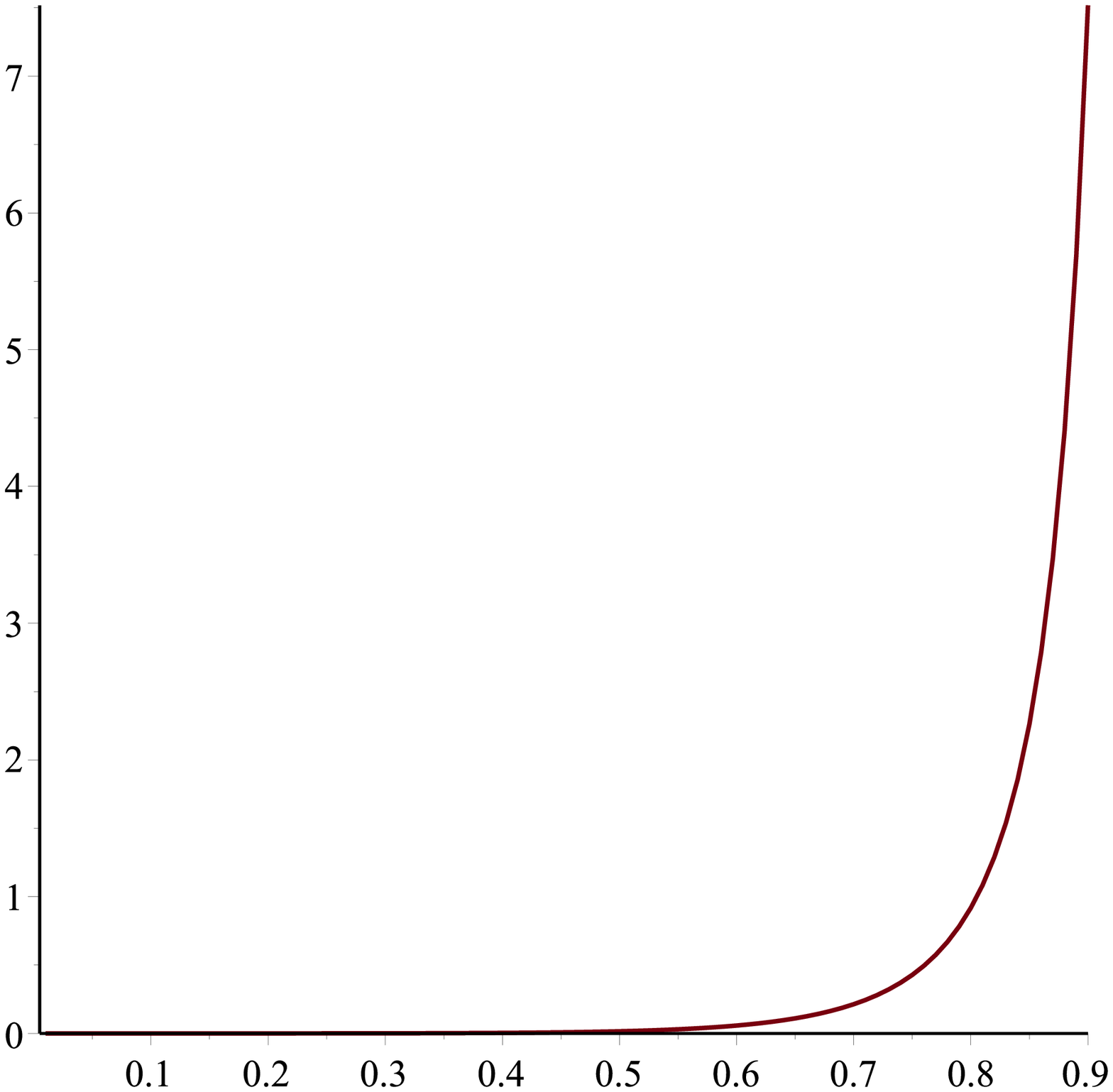}}
	\qquad
	\subfigure{\includegraphics[scale=0.3]{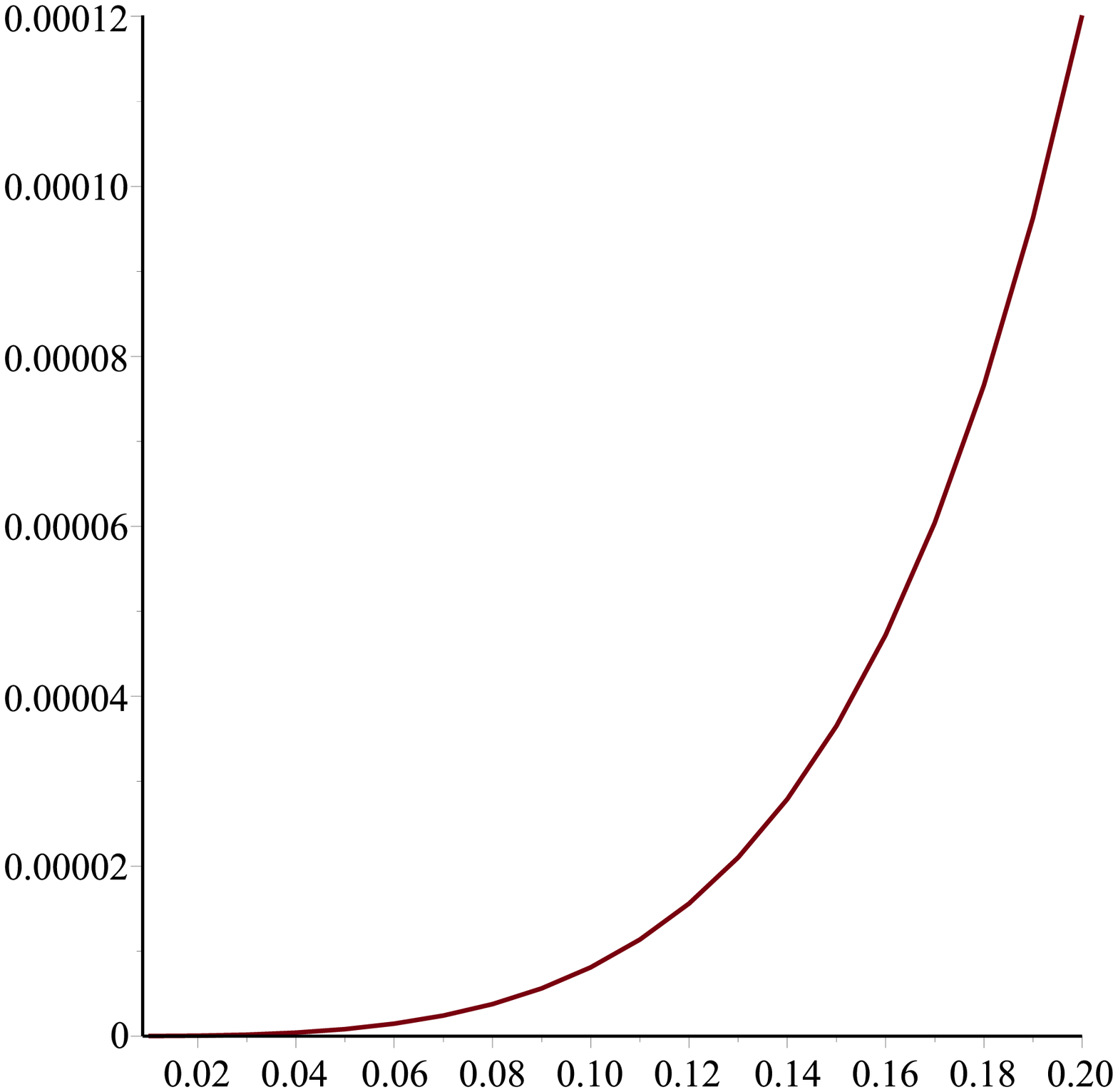}}
	\caption{Left: Graph of $\Psi$ for  $k\in(0,k_L)$. Right: Graph of $\Psi$ for  $k\in(0,0.2)$. In both cases, $L=30$.}
	\label{mbbmP4}
	
\end{figure}

Table \ref{table2} below also gives the values of $\Phi$ and $\Psi$ for different values of $k$. As an application of Theorem \ref{regtheorem} we then have proved the following.

\begin{theorem}\label{mbbmstabteo}
	For each $k\in J=(0,k_L)$, the periodic traveling wave $\phi_k$ given in \eqref{wavebbm}
	is orbitally stable in $H^{1}_{per}([0,L])$.
\end{theorem}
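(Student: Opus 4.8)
The plan is to recognize the modified BBM equation \eqref{mbbmequation} as the instance of the regularized family \eqref{regeq} with $\mathcal{M}=-\partial_x^2$ and $f(u)=u^3$, so that the symbol is $\alpha(m)\sim m^2$ (hence $s_1=s_2=2$ and $\gamma=0$), the energy space is $H^1_{per}([0,L])$, and Theorem \ref{regtheorem} will apply once hypotheses {\bf (P0)--(P4)} are verified for the explicit profile \eqref{wavebbm}. First I would check {\bf (P0)}: a direct substitution (via the quadrature method and formula 257.00 in \cite{friedman}) confirms that $\phi_k$ in \eqref{wavebbm} solves \eqref{mbbmedo} with the stated $c(k)$ and $A(k)$; since $k\mapsto 16K^2(k)\sqrt{k^4-k^2+1}$ is strictly increasing, $c(k)$ is smooth and increasing on $(0,k_L)$, and the requirement $c(k)>1$ forces $L>2\pi$, which I would assume throughout. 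This produces the curve $k\in J=(0,k_L)\mapsto\phi_k\in H^2_{per}([0,L])$.

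Next I would establish the spectral conditions {\bf (P1)--(P2)}. Because $c(k)>1$, the number of negative eigenvalues and the dimension of the kernel of $\mathcal{L}_k=-c\partial_x^2+(c-1)-3\phi_k^2$ coincide with those of the rescaled Hill operator $\tilde{\mathcal{L}}_k=-\partial_x^2+\frac{c-1}{c}-\frac{3}{c}\phi_k^2$, whose eigenvalues are the ones of $\mathcal{L}_k$ divided by the positive constant $c$. By the isoinertial property (Theorem 3.1 in \cite{Neves2}, Theorem 3.1 in \cite{natali2}), the inertial index $(n,z)$ of $\tilde{\mathcal{L}}_k$ does not depend on $k$ or $L$, so it suffices to test one pair, say $k_0=0.5$ and $L_0=50$. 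Since $\phi_{k_0}'$ has exactly two zeros in $[0,L_0)$ and spans the kernel of $\tilde{\mathcal{L}}_{k_0}$, zero is either the second or the third eigenvalue; computing the Floquet quantity $\theta$ in \eqref{formulathetambbm} by solving the initial value problem \eqref{pvialoisiombbm} yields $\theta<0$, which by the Neves--Natali criterion means zero is the second eigenvalue, and it is simple. Hence $\lambda_0<\lambda_1=0<\lambda_2$, giving {\bf (P1)--(P2)}.

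Then I would verify {\bf (P3)} and {\bf (P4)}. Differentiating \eqref{mbbmedo} in $k$ gives $\mathcal{L}_k(\partial_k\phi_k)=-\partial_k c\,(-\phi_k''+\phi_k)-\partial_k A$; pairing with $\partial_k\phi_k$ and using $Q(u)=\frac{1}{2}\int_0^L((u')^2+u^2)\,dx$ and $V(u)=\int_0^L u\,dx$ produces the identity $\Phi=-\partial_k c\,\partial_k Q(\phi_k)-\partial_k A\,\partial_k V(\phi_k)$. Writing $Q(\phi_k)$ and $V(\phi_k)$ in terms of complete and incomplete elliptic integrals (as in Subsection \ref{dssub}, with Heuman's Lambda function entering through the $1+\beta^2\sn^2$ denominator), one checks — numerically, or analytically via Taylor expansions of the elliptic functions — that $\Phi<0$ on $(0,k_L)$. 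The same elliptic-integral expressions then let one evaluate $\Psi:=M_k(\phi_k)+\partial_k c\,Q(\phi_k)$ and confirm $\Psi\neq 0$, which is precisely {\bf (P4)} (and is supported by Figures \ref{mbbmP3}--\ref{mbbmP4}). With {\bf (P0)--(P4)} in hand, Theorem \ref{regtheorem} gives the orbital stability of $\phi_k$ in $H^1_{per}([0,L])$ for each $k\in(0,k_L)$.

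I expect the main obstacle to be the spectral step {\bf (P1)--(P2)}: although the isoinertial reduction reduces the problem to testing a single parameter pair, it ultimately rests on the sign of $\theta$, which is obtained by numerical integration of \eqref{pvialoisiombbm}; an entirely analytic proof that $\theta<0$ for every admissible $L>2\pi$ would require a delicate qualitative study of that initial value problem. A secondary difficulty is the analytic verification of $\Phi<0$ and $\Psi\neq 0$, which involves lengthy manipulations of incomplete elliptic integrals and Heuman's Lambda function; in practice these are carried out numerically, consistently with the plots in Figures \ref{mbbmP3}--\ref{mbbmP4} and the table of values of $\theta$, $\Phi$, and $\Psi$.
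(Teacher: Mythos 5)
Your proposal is correct and follows essentially the same route as the paper: verify {\bf (P0)} from the explicit $c(k)$, $A(k)$ with the constraint $L>2\pi$, reduce {\bf (P1)--(P2)} to the rescaled Hill operator $\tilde{\mathcal{L}}_k$ via the isoinertial property and the sign of $\theta$ computed from \eqref{pvialoisiombbm}, check {\bf (P3)--(P4)} numerically through the elliptic-integral expressions for $Q(\phi_k)$ and $V(\phi_k)$, and conclude with Theorem \ref{regtheorem}. Your closing remark about the numerical (rather than fully analytic) nature of the verifications of $\theta<0$, $\Phi<0$, and $\Psi\neq 0$ accurately reflects the status of the paper's own argument.
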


\begin{table}[htb]
	\centering
	\caption{$\Phi$ and $\Psi$ for some values of $k$.}
	\begin{tabular}{|c|c|c|} 
		\hline 
	$k$ & $\Phi$ & $\Psi$ \\ 
		\hline 
0.1	& $3.675157856\times 10^{-8}$& 0.00009093638233\\
		\hline 
0.2 &0.000002575957430 &0.0007877593065 \\
 \hline
0.3		& 0.00003422439640 & 0.003061968119\\
		\hline
0.4		 &0.0002398363306 & 0.009008424446\\
		 \hline
0.5		& 0.001228499118 & 0.02397754841\\
		\hline
0.6		 & 0.005375083538&0.06355524106 \\
		 \hline
0.7		& 0.02241081146 &0.1814545325 \\
		\hline
0.8		 &0.1029912842 & 0.6356553017\\
		 \hline
0.9		& 0.7898496312 & 4.353282492 \\
		\hline
	\end{tabular}
\label{table2}
\end{table}

\subsection{The regularized Schamel equation}

Here we consider the so called regularized Schamel equation
\begin{equation}
u_t-u_{xxt}+\partial_x(u+|u|^{3/2})=0,
\label{bbm}
\end{equation}
which can be viewed as a regularized version of \eqref{schamel} in much the same way that the BBM equation can be viewed as a regularized version of the KdV equation.

Periodic traveling waves solutions for \eqref{bbm} may be obtained in view of the quadrature method. Indeed, in \cite{depas} it was shown that for $L>4\pi$ there exists $k_L\in(0,1)$ such that for $k\in(0,k_L)$ the function
\begin{equation}
\phi_{k}(x)=\left[\!\frac{5}{12}\!\left(\!\frac{L^2\!-\!64(2k^2\!-\!1)K^2(k)}{\tilde{m}(k)}-1\! \right)\!+\!\frac{80k^2K^2(k)}{\tilde{m}(k)}\mathrm{CN}^2 \left(\!\frac{2K(k)}{L} x,k\!\right)\! \right]^2,
\label{phikL}
\end{equation}
where $\tilde{m}(k)=L^2-64K^2(k) \sqrt{k^4-k^2+1}$, is a traveling wave for \eqref{bbm} with
\begin{equation}
c=\frac{L^2}{L^2-64 K^2(k)\sqrt{k^4-k^2+1}}
\label{ck}
\end{equation}
and
\begin{equation}
A=\frac{-204800K^6(k)}{27\tilde{m}^3(k)}\left[\left( \sqrt{k^4-k^2+1}-(2k^2-1)\right)^2\left(2\sqrt{k^4-k^2+1}+(2k^2-1)\right)\right].
\label{AkL}
\end{equation}
The hypothesis {\bf (P0)} is then fulfilled with $J:=(0,k_L)$. Properties {\bf (P1)-(P2)} can be obtained by studying the periodic eigenvalue problem  associated with the operator $\mathcal{L}_k=-c\partial_x^2+(c-1)-\frac{3}{2}\phi_k^{1/2}$, which in turn is equivalent to study the eigenvalue problem associated with a Lam\'e type equation, namely,
\begin{equation}\left\{\begin{array}{l}
\ds\Lambda''(x)+\left[h-5\cdot 6\cdot k^2 \sn^2\left(x,k\right)\right]\Lambda(x)=0,\\
\Lambda(0)=\Lambda(2K(k)), \quad \Lambda'(0)=\Lambda'( 2K(k)),
\end{array}\right.
\label{lame7}
\end{equation}
Once we known the eigenvalues of \eqref{lame7} explicitly, {\bf (P1)-(P2)} are promptly obtained (see \cite{depas} and \cite{ince}).

In view of the expression of the solution $\phi_k$, properties {\bf (P3)-(P4)} are a little bit hard to be obtained. However,  after some algebraic computations one can check that they still hold here. In conclusion, Theorem \ref{regtheorem} can be applied to obtain the orbital stability of $\phi_k$ by the flow of \eqref{bbm}. We refer the reader to \cite{depas} for the details.

\section*{Acknowledgements}

Part of this work was developed during the Ph.D Thesis of the first author, concluded at IMECC-UNICAMP under the guidance of the second author. The first author acknowledges the financial support from Capes and CNPq. The second author is partially supported by CNPq.

\end{document}